\title{$q$-congruences, with applications to supercongruences and the cyclic sieving phenomenon}
\author{Ofir Gorodetsky}
\date{}
\newtheorem*{thm*}{Theorem}
\newtheorem{thm}{Theorem}[section]
\newtheorem{lem}[thm]{Lemma}  
\newtheorem{cor}[thm]{Corollary}
\newtheorem{proposition}{Proposition}[section]
\newtheorem{remark}[proposition]{Remark}
\newtheorem{example}[proposition]{Example}
\theoremstyle{definition}
\newtheorem{definition}[proposition]{Definition}
\numberwithin{equation}{section}
\begin{document}

\maketitle
	\begin{abstract}
		We establish a supercongruence conjectured by Almkvist and Zudilin, by proving a corresponding $q$-supercongruence. Similar $q$-supercongruences are established for binomial coefficients and the Ap\'{e}ry numbers, by means of a general criterion involving higher derivatives at roots of unity. Our methods lead us to discover new examples of the cyclic sieving phenomenon, involving the $q$-Lucas numbers.
	\end{abstract}

\section{Introduction}
A sequence of integers $\{ a_n \}_{n \ge 1}$ is said to satisfy the Gauss congruences if, for all positive integers $n$,
\begin{equation}\label{neckcond}
\sum_{d \mid n} \mu(d) a_{\frac{n}{d}} \equiv 0 \bmod n,
\end{equation}
where $\mu$ is the usual M\"obius function, defined as
\begin{equation*}
\mu(n) =\begin{cases} (-1)^k & \mbox{if $n=\prod_{i=1}^{k} p_i$, $p_i$ distinct primes,} \\ 0 & \mbox{otherwise.}\end{cases}
\end{equation*}
Gauss proved that $a_n = a^n$ satisfies the Gauss congruences for any prime number $a$. Gauss's result was later extended (independently by various authors) to the following family of examples: $a_n = \mathrm{Tr}(A^n)$ where $A$ can be any square matrix over the integers. See the paper of Zarelua \cite{zarelua2008} for a detailed survey and proofs (cf. Steinlein \cite{steinlein2017} and Corollary~\ref{corexamg} below). 
Other terms for a sequence satisfying the Gauss congruences include `Gauss sequence' \cite{gillespie1989,minton2014}, `generalized Fermat sequence' \cite{du2003}, and `Dold sequence' \cite[Ch.~3.1]{jezierski2006}.
It is known that condition \eqref{neckcond} holds if and only if the following holds for all primes $p$ and positive integers $n,k$:
\begin{equation}\label{primecond}
a_{p^{k}n} \equiv a_{p^{k-1}n} \bmod p^{k},
\end{equation}
see Proposition~\ref{gausscrit}.
For instance, if $a \ge b$ are positive integers, it is known that $a_n = \binom{an}{bn}$ satisfies \eqref{primecond} for all primes $p$ \cite[Ch.~7.1.6]{robert2000}, and so it satisfies the Gauss congruences.

In this paper we introduce the following $q$-analogue of \eqref{neckcond}, which seems to be new. First, for any positive integer $n$, define the following polynomial in variable $q$, which attains the value $n$ at $q=1$:
\begin{equation*}
[n]_q = \frac{q^n-1}{q-1} \in \mathbb{Z}[q].
\end{equation*}
\begin{definition}
A sequence of polynomials $\{ a_n(q)\}_{n \ge 1} \subseteq \mathbb{Z}[q]$ is said to satisfy the `$q$-Gauss congruences' if, for all positive integers $n$,
\begin{equation}\label{qneckcond}
\sum_{d \mid n}\mu(d) a_{n/d}(q^d) \equiv 0 \bmod [n]_q.
\end{equation}
\end{definition}
In \eqref{qneckcond}, the condition $f(q) \equiv 0 \bmod g(q)$ for polynomials $f,g \in \mathbb{Z}[q]$ means $f(q)/g(q) \in \mathbb{Z}[q]$. Since in \eqref{qneckcond} the modulus $g$ is a monic polynomial, Gauss's lemma tells us that the weaker condition $f(q) / g(q) \in \mathbb{Q}[q]$ is equivalent to $f(q)/g(q) \in \mathbb{Z}[q]$. Some simple examples of sequences that satisfy the $q$-Gauss congruences are $a_n(q) =1$ and $a_n(q)=q^n$.

An important consequence of the definition, which is the main motivation behind this paper, is that if $\{ a_n(q) \}_{n \ge 1}$ satisfies the $q$-Gauss congruences, then $\{ a_n(1) \}_{n \ge 1}$ satisfies the Gauss congruences -- this follows by substituting $q=1$ in \eqref{qneckcond}. So one possible way to prove that a sequence satisfies the Gauss congruences is to find a $q$-analogue of it that satisfies the $q$-Gauss congruences. As demonstrated in recent works of Guo and Zudilin \cite{guo2018} and Straub \cite{straub2019}, the approach of establishing congruences via $q$-congruences is fruitful because of additional techniques available in the $q$-setting. In this work we make heavy use of the derivative and its properties.
\begin{remark}
In Lemma~\ref{lem:equivsq} we show that if $\{ a_n(q)\}_{n \ge 1}$ satisfies the $q$-Gauss congruences, then for all primes $p$ and all $n,k \ge 1$
\begin{equation}\label{eq:qanaloguepower}
a_{p^kn}(q) \equiv a_{p^{k-1}n}(q^p) \bmod [p^k]_q.
\end{equation}
There is no implication in the reverse direction.
\end{remark}
\begin{remark}
The special case $k=1$ of \eqref{primecond} is a special case of the Lucas congruences, while the special case $k=1$ of \eqref{eq:qanaloguepower} is a special case of the $q$-Lucas congruences.
\end{remark}
\subsection{Notation}
We use the following notation throughout. For any positive integer $n$, let $\omega_n= e^{\frac{2\pi i}{n}} \in \mathbb{C}$ be a primitive root of unity of order $n$, let $\Phi_n(q) \in \mathbb{Z}[q]$ be the $n$-th cyclotomic polynomial, and set $\mu_n = \{ \omega_n^i : i \in \mathbb{Z} \}$. Given $\omega \in \mu_n$, we write $\mathrm{ord}(\omega)$ for its order. The notation $[u^n]f(u)$, where $f$ is a power series in $u$, means the coefficient of $u^n$ in $f$. We will often write $(a,b)$ instead of $\gcd(a,b)$. For any $n \ge 1$, we set
\begin{equation*}
[n]_q! = \prod_{i=1}^{n} [i]_q,
\end{equation*}
and also $[0]_q!=1$. We define, for all $n \ge k \ge 0$, 
\begin{equation*}
{n \brack k}_q = \frac{[n]_q!}{[k]_q! [n-k]_q!}.
\end{equation*}
The rational functions ${n \brack k}_q$ are in fact polynomials in $\mathbb{Z}[q]$, known as Gaussian binomial coefficients or $q$-binomial coefficients \cite{cohn2004}. Their value at $q=1$ is $\binom{n}{k}$. The $q$-binomial coefficients satisfy the $q$-binomial theorem \cite[Ch.~3.2]{chan2011}:
\begin{equation}\label{qbinom}
\prod_{i=0}^{n-1} (1+tq^i) = \sum_{k=0}^{n} {n \brack k}_{q} t^k q^{\binom{k}{2}}.
\end{equation}
We may also take \eqref{qbinom} to be the definition of ${n \brack k}_{q}$. 
\subsection{First examples}\label{secrel}
A main theme in this work is roots of unity. As we shall show in Corollary~\ref{cor:sp}, a sequence $\{ a_n(q) \}_{n \ge 1} \subseteq \mathbb{Z}[q]$ satisfies the $q$-Gauss congruences if and only if 
\begin{equation}\label{eq:Critanomega}
a_n(\omega) = a_{\frac{n}{\mathrm{ord}(\omega)}}(1)
\end{equation}
for all $n \ge 1$ and $\omega \in \mu_n$. This criterion provides almost immediately the examples below, verified in \S\ref{subsecweakthm}. Let $a \ge b \ge 1$ be integers.
\begin{example}
The sequence $a_n(q)={an \brack bn}_q$ satisfies the $q$-Gauss congruences. It is a $q$-analogue of $a_n(1) = \binom{an}{bn}$. Up to a power of $q$, ${an \brack bn}_q$ is the coefficient of $t^{bn}$ in $\prod_{i=0}^{an-1}(1+tq^i)$, as follows from \eqref{qbinom}.
\end{example}
\begin{example}
The sequence $b_n(q) = {an-1 \brack bn}_q$ satisfies the $q$-Gauss congruences. It is a $q$-analogue of $b_n(1) = \binom{an-1}{bn}$. It is equal to the coefficient of $t^{bn}$ in $\prod_{i=0}^{an-bn-1}(1-tq^i)^{-1}$, as follows from the $q$-binomial series \cite[Ch.~3.2]{chan2011}:
\begin{equation*}
\prod_{i=0}^{n-1} (1-tq^i)^{-1} = \sum_{k \ge 0} {n+k-1 \brack k}_{q} t^k .
\end{equation*}
\end{example}
\begin{example}
The sequence $c_n(q) = [t^{bn}]\prod_{i=0}^{n-1}(1-tq^i)^{a}$ satisfies the $q$-Gauss congruences. It is a $q$-analogue of $c_n(1)=(-1)^{bn}\binom{an}{bn}$. Choosing $a=b=1$, $c_n(q)$ equals $(-1)^{n} q^{\binom{n}{2}}$, which is a $q$-analogue of $(-1)^n$ that satisfies the $q$-Gauss congruences. 
\end{example}
\subsection{Main results}\label{secre2}
The following theorem, proved in \S\ref{subsecweakthm}, provides interesting examples of sequences satisfying the $q$-Gauss congruences. The theorem is established using criterion \eqref{eq:Critanomega}.
\begin{thm}\label{propweakbinom}
The following sequences satisfy the $q$-Gauss congruences.
\begin{enumerate}
\item $d_n(q) = \sum_{i=0}^{\lfloor \frac{n}{2} \rfloor} q^{i(i+b)}{n \brack i}_q {n-i \brack i}_q$, for any integer $b \ge -1$.
\item $e_n(q) = \mathrm{Tr}(A(q^{n-1})A(q^{n-2}) \cdots A(1))$, where
\begin{equation*}
A(x)=\begin{bmatrix}
    1  & x \\
    1 & 0
\end{bmatrix} \in \mathrm{Mat}_{2}(\mathbb{Z})[x].
\end{equation*}
\end{enumerate}
\end{thm}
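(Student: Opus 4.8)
The plan is to verify criterion \eqref{eq:Critanomega} for each of the two sequences, i.e.\ to show that for every $n\ge 1$ and every $\omega\in\mu_n$ of order $m=\mathrm{ord}(\omega)$ (so $m\mid n$), we have $d_n(\omega)=d_{n/m}(1)$ and $e_n(\omega)=e_{n/m}(1)$. For part (2) this should be the most transparent: the factor $A(q^{n-1})A(q^{n-2})\cdots A(1)$ becomes $A(\omega^{n-1})\cdots A(\omega)A(1)$ at $q=\omega$, and since $\omega$ has order $m$, the exponents $n-1,n-2,\dots,0$ run over the residues modulo $m$ in blocks, so the product telescopes into $(A(\omega^{m-1})\cdots A(\omega)A(1))^{n/m}$ — here one must be a little careful about the order of the factors, but since we take a trace and $\mathrm{Tr}(XY)=\mathrm{Tr}(YX)$, any cyclic rearrangement is harmless, so $e_n(\omega)=\mathrm{Tr}\big((A(\omega^{m-1})\cdots A(1))^{n/m}\big)$. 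Setting $B=A(\omega^{m-1})\cdots A(1)$, I want this to equal $\mathrm{Tr}(C^{n/m})$ where $C=A(1)^{m}$ is the matrix computing $e_{n/m}(1)=\mathrm{Tr}(A(1)^{(n/m)\cdot?})$ — wait, $e_{n/m}(1)=\mathrm{Tr}(A(1)^{n/m})$, not $\mathrm{Tr}((A(1)^m)^{n/m})$. So the real claim to check is $\mathrm{Tr}(B^{n/m})=\mathrm{Tr}(A(1)^{n/m})$, which will follow if $B$ and $A(1)$ have the same characteristic polynomial, i.e.\ the same trace and determinant. The determinant is easy: $\det A(x)=-x$, so $\det B=\prod_{i=0}^{m-1}(-\omega^i)=(-1)^m\omega^{\binom{m}{2}}$, which equals $-1=\det A(1)$ precisely when $m$ is odd, and when $m$ is even $\omega^{\binom m2}$ is a square root of $\omega^{m(m-1)/1}$... this needs care: $(-1)^m\omega^{\binom m2}=(-1)^m\omega^{m(m-1)/2}$; since $\omega^m=1$, $\omega^{m(m-1)/2}=(\omega^{m/2})^{m-1}$ when $m$ is even, and $\omega^{m/2}=-1$ for a primitive $m$-th root, giving $(-1)^{m-1}=-1$, so $\det B=(-1)^m\cdot(-1)=-1$ always. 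Good. The trace of $B$ requires an honest computation: $B$ is a product of companion-type matrices, and $\mathrm{Tr}(B)$ should be expressible via a $q$-Fibonacci/Lucas type identity evaluated at $\omega$; I expect $\mathrm{Tr}(B)=e_m(\omega)$ in the notation of the theorem, and one shows this equals $e_1(1)=\mathrm{Tr}(A(1))=1$ — but here I would instead directly show $B$ has trace $1$ by exploiting a known product formula for $\prod_{i=0}^{m-1}A(q^i)$ at roots of unity, or by recognizing $e_n(q)$ as a known $q$-analogue (the entries of $A(q^{n-1})\cdots A(1)$ encode Carlitz/Galovich $q$-Fibonacci numbers, and $e_n(q)$ is a $q$-Lucas number, whose value at a primitive $m$-th root of unity is classically $L_{n/m}$ by a $q$-Lucas-theorem-type argument). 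This identification with $q$-Lucas numbers, promised in the abstract, is the conceptual heart and the main obstacle.

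For part (1), I would first recognize $d_n(q)$ combinatorially. When $b=-1$ the weight $q^{i(i-1)}={q}^{2\binom i2}$ and $\sum_i q^{i(i-1)}{n\brack i}_q{n-i\brack i}_q$ is a known $q$-analogue; in general $d_n(q)$ is (a variant of) the $q$-analogue of the sequence $\sum_i\binom ni\binom{n-i}{i}$, which counts matchings / is related to Chebyshev-like polynomials, and at $q=1$ gives $d_n(1)=\sum_i\binom ni\binom{n-i}i$. The key is again to evaluate at $\omega\in\mu_n$ of order $m$. I would use the $q$-Lucas theorem for $q$-binomial coefficients: if $\omega$ is a primitive $m$-th root of unity and we write $n=m q_1+r_1$, $i=m q_2+r_2$ with $0\le r_1,r_2<m$, then ${n\brack i}_\omega=\binom{q_1}{q_2}{r_1\brack r_2}_\omega$ (this is standard, e.g.\ via Lucas's theorem applied to the cyclotomic factorization of $[n]_q!$). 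Since here $m\mid n$ we have $r_1=0$, forcing $q_2\le q_1=n/m$ and $r_2=0$, so the only surviving terms in the sum are those with $m\mid i$, and for such $i=mj$ we get ${n\brack i}_\omega=\binom{n/m}{j}$ and ${n-i\brack i}_\omega={n-mj\brack mj}_\omega=\binom{(n/m)-j}{j}$ (again $m\mid n-i$ and $m\mid i$). It remains to handle the $q$-power weight: $\omega^{i(i+b)}=\omega^{mj(mj+b)}$, and since $\omega^m=1$ this is $\omega^{mjb\cdot?}$... more carefully $\omega^{(mj)^2}=(\omega^m)^{mj^2}=1$ and $\omega^{mjb}=(\omega^m)^{jb}=1$, so the weight is identically $1$. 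Hence $d_n(\omega)=\sum_j\binom{n/m}{j}\binom{(n/m)-j}{j}=d_{n/m}(1)$, as required — the weight being built from $i(i+b)$ with every term divisible by $m$ whenever $m\mid i$ is exactly why the constraint $b\ge-1$ (or really just $b$ an integer) suffices, and one should double-check the edge cases $i=0$ and $i=\lfloor n/2\rfloor$ against the summation range after restricting to multiples of $m$.

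So the skeleton is: (i) invoke Corollary \ref{cor:sp} to reduce to the root-of-unity identity \eqref{eq:Critanomega}; (ii) for (1), apply the $q$-Lucas theorem for $q$-binomials plus the observation that the $q$-power weights trivialize at $\omega$ because every exponent is a multiple of $m=\mathrm{ord}(\omega)$ on the surviving terms, collapsing the sum to the $q=1$ sum with $n$ replaced by $n/m$; (iii) for (2), use cyclicity of the trace to rewrite $e_n(\omega)$ as $\mathrm{Tr}(B^{n/m})$ with $B=\prod_{i=m-1}^{0}A(\omega^i)$, compute $\det B=-1$ directly, and show $\mathrm{Tr}(B)=1$ by identifying the trace of the partial products of $A(q^i)$ with $q$-Fibonacci/Lucas polynomials and using a $q$-Lucas-type evaluation — or more self-containedly, by an induction on $m$ establishing the product formula for $\prod_{i=0}^{m-1}A(q^i)$ modulo $\Phi_m(q)$. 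I expect step (iii), specifically pinning down $\mathrm{Tr}(B)=1$ (equivalently, that the relevant $q$-Fibonacci numbers vanish appropriately at primitive $m$-th roots of unity for $m\ge 2$, while the $q$-Lucas number specializes to $1$), to be the main obstacle; once the characteristic polynomial of $B$ is shown to be $X^2-X-1$, independent of $\omega$, the conclusion $e_n(\omega)=\mathrm{Tr}(B^{n/m})=L_{n/m}=e_{n/m}(1)$ is immediate.
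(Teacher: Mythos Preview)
Your proposal is correct and follows essentially the same route as the paper: both parts reduce via Corollary~\ref{cor:sp} to the root-of-unity identity, part~(1) collapses via the $q$-Lucas evaluation (the paper's Lemma~\ref{lemroots}) exactly as you describe, and part~(2) reduces to showing $B=A(\omega^{m-1})\cdots A(1)$ has characteristic polynomial $X^2-X-1$. The trace obstacle you correctly flag is precisely what the paper isolates in Lemma~\ref{lemmatricessim}, and there too the key input $\mathrm{Tr}(B)=1$ is not proved from scratch but quoted from Pan's result~\eqref{eq:panen}.
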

The sequence $d_n(q)$ is a $q$-analogue of $d_n(1) =\sum_{i=0}^{\lfloor \frac{n}{2} \rfloor} \binom{n}{i}\binom{n-i}{i}$, the central trinomial coefficient, that is, the $n$-th coefficient of $(1+x+x^2)^n$. This $q$-analogue was first introduced by Andrews and Baxter \cite[Eq.~(2.7);\, $A=0$]{andrews1987} in the context of statistical mechanics. More generally, the trinomial coefficient $\binom{n}{a}_2$ is the coefficient of $x^a$ in $(1+x+x^{-1})^n$, and it has the $q$-analogue \cite[Eq.~(2.7)]{andrews1987} 
\begin{equation}\label{eq:deftrinom}
{n; b; q \brack a}_2 = \sum_{i=0}^{ \lfloor \frac{n-a}{2} \rfloor } q^{i(i+b)} {n \brack i}_q {n-i \brack i+a}_q,
\end{equation}
where $b$ is an integer parameter. This $q$-analogue was studied extensively by Andrews \cite{andrews1990,andrews1990q,andrews1994} and Warnaar \cite{warnaar2001, warnaar2003} and we shall return to it in the next theorem. In \S\ref{seccritgauss} we show that $d_n(1)$ satisfies the Gauss congruences, independently of Theorem~\ref{propweakbinom}. A congruence for $d_n(1)$ of different flavor (namely, of Lucas type) was proved by Deutsch and Sagan \cite[Thm.~4.7]{deutsch2006}.

The sequence $e_n(q)$ is a $q$-analogue of $e_n(1) = \mathrm{Tr}( A(1)^n) = L_n$, the Lucas numbers, defined usually as $L_n=F_{n-1}+F_{n+1}$, where $F_n$ are the Fibonacci numbers. Schur \cite{schur1917}  considered the following $q$-analogues of the Fibonacci numbers, in his study of the Rogers-Ramanujan identities:
\begin{equation*}
\begin{split}
F_n(q) &=F_{n-1}(q)+q^{n-2}F_{n-2}(q), \qquad F_0(q)=0, F_1(q)=1,\\
G_n(q)&=G_{n-1}(q)+q^{n-1}G_{n-2}(q), \qquad G_0(q)=0, G_1(q)=1.
\end{split}
\end{equation*}
These $q$-analogues were studied by Andrews \cite{andrews2004}, Carlitz \cite{carlitz1974,carlitz1975}, Cigler  \cite{cigler2003,cigler2004,cigler2016}, Pan \cite{pan2006,pan2013} and others. As can be shown inductively, we have \cite[Eq.~(1.8)]{cigler2016}
\begin{equation*}
A(q^{n-1})A(q^{n-2}) \cdots A(1) = \begin{bmatrix}
    F_{n+1}(q)  & G_n(q) \\
    F_{n}(q) & G_{n-1}(q)
\end{bmatrix},
\end{equation*}
and so
\begin{equation}\label{deffnsum}
e_n(q) = F_{n+1}(q)+G_{n-1}(q)
\end{equation}
for all $n \ge 1$. The $q$-analogue $e_n(q)$ of the Lucas numbers, as defined in \eqref{deffnsum}, was introduced by Pan, who proved that \cite[Thm.~1.1;\, $(\alpha,\beta,\gamma,\delta)=(0,0,1,1)$]{pan2013}
\begin{equation}\label{eq:panen}
e_n(q) \equiv 1 \bmod \Phi_n(q)
\end{equation}
for all $n \ge 1$ (Pan stated his result for $n \ge 3$, but a short calculation shows that it holds for $n=1,2$ as well). We use \eqref{eq:panen} in the proof of Theorem~\ref{propweakbinom}, which can be considered as a generalization of it. Indeed, using criterion \eqref{eq:Critanomega} with $a_n(q)=e_n(q)$ and with primitive roots of unity of order $n$, one recovers \eqref{eq:panen}.

Theorem~\ref{propweakbinom} suggests new examples of the Cyclic Sieving Phenomenon (CSP). We recall the definition of the CSP, which was first defined by Reiner, Stanton and White \cite{reiner2004}. Let $X$ be a finite set, $C$ be a finite cyclic group acting on $X$, and $f(q)$ be a polynomial in $q$ with non-negative integer coefficients. Then the
triple $(X, C, f(q))$ exhibits the CSP if, for all $g \in C$, we have
\begin{equation*}
|X^g| = f(\omega),
\end{equation*}
where $X^g$ is the fixed point set of $g$, and $\omega$ is a root of unity whose order is the same as $g$'s. The simplest interesting example is probably the following. Let $A_{n,k}$ be the set of words (that is, finite sequences) $w$ of length $n$ with $k$ $0$-s and $n-k$ $1$-s. Let $\mathbb{Z}/n\mathbb{Z}$ act on $A_{n,k}$ by rotation. Then $(A_{n,k}, \mathbb{Z}/n\mathbb{Z}, {n \brack k}_q)$ exhibits the CSP. We suggest Sagan's survey \cite{sagan2011} on the topic. 

The behavior of sequences satisfying the $q$-Gauss congruences on roots of unity, described by criterion \eqref{eq:Critanomega}, makes them plausible candidates for the CSP. Indeed, suppose that $\{ a_n(q) \}_{n \ge 1} \subseteq \mathbb{Z}[q]$ satisfies the $q$-Gauss congruences and that $a_n(q)$ has non-negative coefficients for all $n \ge 1$. Suppose further that there are sets $\{ X_n \}_{n \ge 1}$ such that $|X_n| = a_n(1)$, and $\mathbb{Z}/n\mathbb{Z}$ acts on $X_n$ in such a way that $|X_n^i| = |X_{\gcd(n,i)}|$ for all $n \ge 1$, $i \in \mathbb{Z}/n\mathbb{Z}$. Then, by definition, $(X_n, \mathbb{Z}/n\mathbb{Z}, a_n(q))$ exhibits the CSP for all $n \ge 1$. 

In particular, Theorem~\ref{propweakbinom} gives rise to two families exhibiting the CSP. Let $B_{n,0}$ be the set of words of length $n$ on letters $0$, $1$ and $2$, such that the number of $0$-s is equal to the number of $2$-s. Let $C_n$ be the set of words $w$ of length $n$ on letters $0$ and $1$, such that there are no consecutive $1$-s in $w$, not even cyclically ($w_1=w_n=1$ is not allowed). Let $\mathbb{Z}/n\mathbb{Z}$ act by rotation on both $B_{n,0}$ and $C_n$. A short calculation using Theorem~\ref{propweakbinom} shows that $(B_{n,0},\mathbb{Z}/n\mathbb{Z},d_n(q))$ and $(C_n, \mathbb{Z}/n\mathbb{Z}, e_n(q))$ exhibit the CSP for all $n\ge 1$. The following result, proved in \S\ref{seccsp}, shows that much more is true.
\begin{thm}\label{thm:propcsp}\hspace{2em}
\begin{enumerate}
\item Fix integers $n \ge 1$, $|k| \le n$ and $b \ge -1$. Let $B_{n,k}$ be the set of words $w$ of length $n$ on letters $0$, $1$ and $2$, such that the number of $0$-s minus the number of $2$-s is equal to $k$. Let $\mathbb{Z}/n\mathbb{Z}$ act on $B_{n,k}$ by rotation. Then $(B_{n,k},\mathbb{Z}/n\mathbb{Z},{n;b;q \brack k}_2)$ exhibits the CSP, where ${n;b;q \brack k}_2$ is defined in \eqref{eq:deftrinom}.
\item Fix integers $n \ge 1$, $0 \le k \le \frac{n+1}{2}$. Let $C_{n,k}$ be the set of words $w$ of length $n$ with $k$ $1$-s and $n-k$ $0$-s, such that there are no consecutive $1$-s in $w$, not even cyclically ($w_1=w_n=1$ is not allowed). Let $\mathbb{Z}/n\mathbb{Z}$ act on $C_{n,k}$ by rotation. Set $e_{n,k}(q)=[t^k]\mathrm{Tr}(A(q^{n-1},t)A(q^{n-2},t) \cdots A(1,t))$  where
\begin{equation*}
A(x,t)=\begin{bmatrix}
    1  & x \\
    t & 0
\end{bmatrix} \in \mathrm{Mat}_{2}(\mathbb{Z})[x,t].
\end{equation*}
Then $(C_{n,k},\mathbb{Z}/n\mathbb{Z},e_{n,k}(q))$ exhibits the CSP.
\end{enumerate}
\end{thm}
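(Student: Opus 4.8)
The plan is to handle both parts by the same two-step scheme. \emph{Step (a): a combinatorial fixed-point count.} Fix $g\in\mathbb{Z}/n\mathbb{Z}$ and set $d=\mathrm{ord}(g)=n/\gcd(n,g)$, so $d\mid n$. A word of length $n$ is fixed by rotation by $g$ exactly when it is periodic with period $n/d$, i.e.\ of the form $u^{d}$ with $|u|=n/d$, and then its letter multiset is $d$ copies of that of $u$. Hence, writing $X_{n,k}$ for $B_{n,k}$ or $C_{n,k}$, the set $X_{n,k}^{g}$ is empty unless $d\mid k$, in which case $u\mapsto u^{d}$ is a bijection $X_{n/d,k/d}\to X_{n,k}^{g}$; for the family $C$ one additionally checks, by a short necklace argument, that $u$ has no two cyclically adjacent $1$'s iff $u^{d}$ has none (the degenerate case $n/d=1$ being consistent because the length-$1$ word ``$1$'' is excluded from $C$). \emph{Step (b): evaluating the polynomial at a root of unity.} By step (a) it remains to prove that, for a primitive $d$-th root of unity $\omega$, the prescribed polynomial evaluated at $q=\omega$ equals $|X_{n/d,k/d}|$ when $d\mid k$ and $0$ otherwise. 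This refines criterion \eqref{eq:Critanomega} (the case $k=0$) and, for $C$, Theorem~\ref{propweakbinom}(2) (the case summed over $k$), and it is where I expect the main difficulty.

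For \textbf{part (1)} I would carry out step (b) with the $q$-Lucas theorem for Gaussian binomials, ${m \brack r}_\omega=\binom{\lfloor m/d\rfloor}{\lfloor r/d\rfloor}{m\bmod d \brack r\bmod d}_\omega$, applied to \eqref{eq:deftrinom} at $q=\omega$ (this works for $k$ of either sign, reading $\lfloor k/d\rfloor$ and $k\bmod d$ as the integer floor and the non-negative remainder). Since $d\mid n$, the factor ${n \brack i}_\omega$ vanishes unless $d\mid i$; writing $i=i_{1}d$ gives ${n \brack i}_\omega=\binom{n/d}{i_{1}}$, the factor $q^{i(i+b)}$ becomes $1$ at $q=\omega$ (as $d\mid i$ forces $d\mid i(i+b)$), and ${n-i \brack i+k}_\omega=\binom{n/d-i_{1}}{i_{1}+\lfloor k/d\rfloor}\cdot[\,d\mid k\,]$. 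Hence ${n;b;\omega \brack k}_2=0$ when $d\nmid k$, and when $d\mid k$,
\begin{equation*}
{n;b;\omega \brack k}_2=\sum_{i_{1}\ge 0}\binom{n/d}{i_{1}}\binom{n/d-i_{1}}{i_{1}+k/d}=\binom{n/d}{k/d}_{2}=|B_{n/d,k/d}|,
\end{equation*}
the middle step being the multinomial expansion of $(x^{-1}+1+x)^{n/d}$. Matched against step (a) this yields the CSP, once we observe that ${n;b;q \brack k}_2$ has non-negative coefficients for $b\ge -1$ (then $i(i+b)\ge i(i-1)\ge 0$, so each summand of \eqref{eq:deftrinom} is a product of polynomials with non-negative coefficients) and that ${n;b;1 \brack k}_2=\sum_i\binom{n}{i}\binom{n-i}{i+k}=\binom{n}{k}_2=|B_{n,k}|$.

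For \textbf{part (2)} I would first record a transfer-matrix reading of $A(x,t)$ on the states $\{0,1\}$: the vanishing entry $(A)_{11}$ forbids the transition $1\to 1$, while $0\to 0$, $0\to 1$, $1\to 0$ carry weights $1$, $t$, $x$. Expanding the trace over closed walks of length $n$ gives $\mathrm{Tr}\bigl(A(x_{n-1},t)\cdots A(x_{0},t)\bigr)=\sum_{S}t^{|S|}\prod_{i\in S}x_{i}$, summed over $S\subseteq\mathbb{Z}/n\mathbb{Z}$ having no two cyclically adjacent elements. Setting $x_{i}=q^{i}$ identifies $e_{n,k}(q)=\sum_{|S|=k}q^{\sum_{i\in S}i}$, which has non-negative coefficients and satisfies $e_{n,k}(1)=|C_{n,k}|$. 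For step (b): since $\omega^{d}=1$, the product $A(\omega^{n-1},t)\cdots A(1,t)$ equals $M(t)^{n/d}$ with $M(t)=A(\omega^{d-1},t)\cdots A(\omega,t)A(1,t)$, and I would compute $\mathrm{Tr}\,M(t)=1$ and $\det M(t)=-t^{d}$. For the trace, $\mathrm{Tr}\,M(t)=\sum_{S}t^{|S|}\omega^{\sum_{i\in S}i}$ over non-adjacent $S\subseteq\mathbb{Z}/d\mathbb{Z}$; for each $\ell$ with $0<\ell<d$ the shift $S\mapsto S+1$ permutes the non-adjacent $\ell$-subsets and multiplies $\omega^{\sum_{i\in S}i}$ by $\omega^{\ell}\neq 1$, killing that sub-sum, and since non-adjacent subsets of $\mathbb{Z}/d\mathbb{Z}$ have size $<d$ only the empty set survives, so $\mathrm{Tr}\,M(t)=1$. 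For the determinant, $\det M(t)=\prod_{r=0}^{d-1}\det A(\omega^{r},t)=\prod_{r=0}^{d-1}(-\omega^{r}t)=(-t)^{d}\omega^{\binom{d}{2}}=-t^{d}$ (a parity check on $d$). Consequently the eigenvalues of $M(t)$ are the roots of $\lambda^{2}-\lambda-t^{d}$, so $\mathrm{Tr}\bigl(M(t)^{n/d}\bigr)=\mathcal{L}_{n/d}(t^{d})$, where $\mathcal{L}_{N}(s)=\sum_{j}\tfrac{N}{N-j}\binom{N-j}{j}s^{j}$ is the Lucas polynomial and $\tfrac{N}{N-j}\binom{N-j}{j}=|C_{N,j}|$ by Kaplansky's circular lemma. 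Extracting $[t^{k}]$ gives $e_{n,k}(\omega)=|C_{n/d,k/d}|$ if $d\mid k$ and $0$ otherwise, which matches step (a).

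In both parts the only real work is this collapse at a root of unity: the $q$-Lucas computation together with the triviality of $q^{i(i+b)}$ on the surviving indices in (1), and the pair of identities $\mathrm{Tr}\,M(t)=1$, $\det M(t)=-t^{d}$ in (2). Everything else — the fixed-point bijections (including the degenerate cases), the sign bookkeeping for $k<0$ in (1), and the identification of $\mathcal{L}_{N}$ with the generating polynomial of $C_{N,\bullet}$ — is routine.
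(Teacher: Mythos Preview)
Your proposal is correct. Part (1) is essentially identical to the paper's argument: the same periodicity description of fixed points and the same $q$-Lucas evaluation of the $q$-trinomial at a root of unity (the paper packages the latter as Lemma~\ref{lemroots} rather than citing the $q$-Lucas theorem by name).

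Part (2) follows the same overall architecture (transfer-matrix reading of $e_{n,k}$, the factorization at $q=\omega$ into $M(t)^{n/d}$, and the reduction to the characteristic polynomial of $M(t)$), and your determinant computation for $M(t)$ matches the paper's. The genuine difference is in how $\mathrm{Tr}\,M(t)=1$ is obtained. The paper proves this in Lemma~\ref{lemmatricessim} by an interpolation argument: it shows the polynomial $P(t)=\mathrm{Tr}\,A_\omega(t)$ takes the value $1$ at $t=0$ and at every $t\in\mu_d$, the latter via a conjugation trick together with Pan's congruence \eqref{eq:panen}, which is invoked as an external input. Your shift argument --- observing that the cyclic shift $S\mapsto S+1$ multiplies the weight $\omega^{\sum_{i\in S}i}$ by $\omega^{|S|}$ and hence annihilates the contribution of each nonempty stratum --- proves the identity directly from the combinatorial expansion, with no appeal to Pan's result. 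This makes your proof of Part~(2) self-contained and in fact yields an independent proof of \eqref{eq:panen} as a by-product (take $t=1$). The paper's route, on the other hand, isolates the key fact as a clean matrix lemma (same characteristic polynomial) that it also reuses in the proof of Theorem~\ref{propweakbinom}.
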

Note that $e_n(q) = \sum_{k} e_{n,k}(q)$. The polynomial $e_{n,k}(q)$ has a closed form. For a word $w$ of length $n$ on letters $0$ and $1$, let
\begin{equation*}
W_1(w) = \sum_{1 \le i \le n: w_i = 1} (n-i), \qquad W_2(w) = \sum_{1 \le i \le n: w_i = 1} i.
\end{equation*}
In \eqref{enkviacnk} we prove that $e_{n,k}(q)$ is equal to $\sum_{w \in C_{n,k}} q^{W_1(w)}$. Since $w\in C_{n,k}$ if and only if the mirror image of $w$ (namely the word $w_{n},w_{n-1},\ldots,w_1$) is in $C_{n,k}$, this shows that $e_{n,k}(q)=g(n,k)/q^k$, where $g(n,k)$ is a polynomial in $q$ given by
\begin{equation*}
g(n,k)(q) = \sum_{w \in C_{n,k}} q^{W_2(w)}.
\end{equation*}
The polynomial $g(n,k)$ was studied by Carlitz \cite{carlitz1974}, who proved that
\begin{equation*}
g(n,k) = q^{k^2}{n-k+1 \brack k}_q - q^{n+(k-1)^2} {n-k-1 \brack k-2}_q.
\end{equation*}
The following theorem concerns supercongruences, an informal notion referring to congruences where the modulus is a surprisingly high power. Before we state our theorem, we need the following definition. 
\begin{definition}\label{def:deforder}
Let $r$ be a positive integer. A sequence $\{ a_n(q) \}_{n \ge 1} \subseteq \mathbb{Z}[q]$ is said to satisfy the `$q$-Gauss congruences of order $r$' if it satisfies the $q$-Gauss congruences, and in addition, for all $n \ge 1$ and $1 \le j \le r-1$, the function
\begin{equation*}
f_{n,j}\colon \mu_n \to \mathbb{C}, \quad \omega \mapsto \omega^j a^{(j)}_n(\omega),
\end{equation*}
depends only on $\mathrm{ord}(\omega)$.  Here $a_n^{(j)}(q)$ is the $j$-th derivative of $a_n(q)$. 
\end{definition}
As we shall show in Theorem~\ref{thmexpansion}, we have the following implication. If $\{ a_n(q) \}_{n \ge 1} \subseteq \mathbb{Z}[q]$ satisfies the $q$-Gauss congruences of order $r$ then for all $n,k \ge 1$ and primes $p \ge r+1$, we have
\begin{equation}\label{eq:Crithigher} 
a_{np^k}(1) \equiv a_{np^{k-1}}(1) \bmod p^{rk}.
 \end{equation}
The following theorem concerns three different sequences. For each sequence, a more detailed theorem is given later in Theorems~\ref{thm:superqbinom}, \ref{thm:aprey}, \ref{thm:zeta}.
\begin{thm}\label{thm:superqexamples}
The following sequences satisfy the $q$-Gauss congruences of order $3$.
\begin{enumerate}
\item $f_n(q) = {an \brack bn}_q$ for $a \ge b \ge 1$.
\item $g_n(q) = \sum_{k=0}^{n} {n \brack k}_q^2 {n+k \brack k}_q^2 q^{f(n,k)}$, where $f(x,y)=y^2+Axy+Bx^2 \in \mathbb{Z}[x,y]$ is a polynomial satisfying $f(n,k) \ge 0$ for all $n,k \ge 0$.
\item $h_n(q) = \sum_{\substack{0 \le \ell \le k \le n\\n\le k+\ell}} {n \brack k}_q^2 {n \brack \ell}_q {k \brack \ell}_q {k+\ell \brack n}_q q^{f(n,k,\ell)}$, where  $f(x,y,z)=y^2+z^2+Ax^2+Bxy+Cxz+Dyz \in \mathbb{Z}[x,y,z]$ is a polynomial satisfying $f(n,k,\ell) \ge 0$ for all $n,k,\ell \ge 0$.
\end{enumerate}
\end{thm}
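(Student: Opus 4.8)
The plan is to verify Definition~\ref{def:deforder} with $r=3$ directly for each of the three sequences. For $r=3$ this requires two things: (a) the base $q$-Gauss congruences, which by Corollary~\ref{cor:sp} amount to the identity $a_n(\omega)=a_{n/\mathrm{ord}(\omega)}(1)$ for all $\omega\in\mu_n$; and (b) that for $j=1,2$ the quantity $\omega^j a_n^{(j)}(\omega)$, as $\omega$ runs over $\mu_n$, depends only on $d:=\mathrm{ord}(\omega)$. Fix $\omega\in\mu_n$ and substitute $q=\omega(1+x)$; since $q-\omega=\omega x$ one has $[x^j]\,a_n(\omega(1+x))=\omega^j a_n^{(j)}(\omega)/j!$, so (a)--(b) together assert precisely that the coefficients of $1,x,x^2$ in $a_n(\omega(1+x))$ are functions of $n$ and $d$ alone (recall $d\mid n$). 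Each of $f_n,g_n,h_n$ is, up to an explicit monomial, a $\mathbb{Z}$-linear combination of products of $q$-binomial coefficients, weighted -- for $g_n$ and $h_n$ -- by $q^F$ with $F$ a form homogeneous of degree $2$ in the summation variables, so that $\omega^F=1$ on the terms that survive at $\omega$. Hence a single device -- the first two $q$-Taylor coefficients of a $q$-binomial at a root of unity -- drives all three parts.

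The two ingredients are as follows. First, each $q$-binomial ${m\brack r}_q$ is a \emph{squarefree} product of cyclotomic polynomials: its $\Phi_d$-valuation is $\lfloor m/d\rfloor-\lfloor r/d\rfloor-\lfloor(m-r)/d\rfloor\in\{0,1\}$, so it vanishes at $\omega$ to order $0$ or $1$ only, governed by the usual carry rule in base $d$. Second, from ${m\brack r}_q=\prod_{j=1}^{r}\tfrac{1-q^{m-r+j}}{1-q^j}$ one expands $\log{m\brack r}_{\omega(1+x)}$ factor by factor: a factor $1-q^e$ with $d\nmid e$ contributes $\log(1-\omega^e)-\tfrac{\omega^e}{1-\omega^e}e\,x+O(x^2)$, whose $x$- and $x^2$-coefficients are fixed rational functions of $\omega^e$ at a nontrivial $d$th root, while one with $d\mid e$ contributes $\log(-e)+\log x+\tfrac{e-1}{2}x+O(x^2)$. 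Whenever all numerator and denominator indices are divisible by $d$ -- which holds for \emph{all} the $q$-binomials that survive into the $x^0$- and $x^1$-analysis, because by the carry rule the terms of $g_n$ with $d\nmid k$, and the terms of $h_n$ with $d\nmid k$ or $d\nmid\ell$, vanish at $\omega$ to order $\ge2$ and contribute nothing there -- the relevant index intervals have length divisible by $d$, so one groups by residue class modulo $d$ and invokes the classical evaluations $\sum_{i=1}^{d-1}(1-\omega^i)^{-1}=\tfrac{d-1}{2}$, $\sum_{i=1}^{d-1}\omega^i(1-\omega^i)^{-2}\in\mathbb{Q}$, together with the conjugation symmetry $i\leftrightarrow d-i$ (which replaces $\sum_i i\,\omega^i(1-\omega^i)^{-2}$ by $\tfrac d2\sum_i\omega^i(1-\omega^i)^{-2}$); everything then collapses to rational numbers depending only on $n$ and $d$. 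The $x^0$-coefficient is exactly the $q$-Lucas value $a_{n/d}(1)$, so (a) follows; and for the single product $f_n$ the same computation yields (b) as well (one finds, for instance, $\omega f_n'(\omega)/f_n(\omega)=b(a-b)n^2/2$, independent even of $d$), giving part~(1) and Theorem~\ref{thm:superqbinom}.

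The main obstacle is (b) with $j=2$ for the \emph{sums} $g_n$ and $h_n$, where there is no termwise product formula. By the carry rule the summands of $g_n$ with $d\nmid k$, and of $h_n$ with exactly one of $d\nmid k$, $d\nmid\ell$, vanish at $\omega$ to order \emph{exactly} $2$ (those of $h_n$ with both $d\nmid k$ and $d\nmid\ell$ vanish to order $\ge3$, hence are invisible modulo $x^3$), so the $x^2$-coefficient picks up from them a stray contribution of the shape $\sum(\text{leading }[x^1]\text{-coefficients})\times(\text{regular part at }\omega)$. Writing ${m\brack r}_q=\Phi_d(q)\times(\text{a polynomial nonvanishing at }\omega)$, one sees this contribution is $(\Phi_d'(\omega)\omega)^2$ -- respectively a product of two such factors -- times a value in $\mathbb{Z}[\omega]$; and $\Phi_d'(\omega)\omega=d/\!\prod_{e\mid d,\,e<d}\Phi_e(\omega)$ is \emph{not} Galois-invariant for general $d$. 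The crux is therefore to show that, although the individual leading coefficients are not invariant, their aggregate -- which can be packaged as $(\Phi_d'(\omega)\omega)^2$ times the value at $\omega$ of an explicit auxiliary polynomial $R(q)\in\mathbb{Z}[q]$ (the part of $g_n(q)$ supported on $d\nmid k$, divided by $\Phi_d(q)^2$, and similarly for $h_n$) -- nevertheless depends only on $n$ and $d$. I expect this to be forced by combining the expansion above with structure special to these $q$-analogues: a differentiated form of the $q$-Lucas identity, or the recurrence and generating-function descriptions of the $q$-Ap\'ery numbers $g_n$ and the $q$-Almkvist--Zudilin numbers $h_n$, which should make the stray leading-coefficient terms telescope or reassemble into $\omega$-independent (indeed rational) quantities. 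Carrying this through for the two sums is precisely the content of Theorems~\ref{thm:aprey} and~\ref{thm:zeta}.
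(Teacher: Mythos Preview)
Your framework is right and matches the paper's: verify $a_n(\omega)=a_{n/d}(1)$ and that $\omega^j a_n^{(j)}(\omega)$ depends only on $d=\mathrm{ord}(\omega)$ for $j=1,2$, using explicit values of $q$-binomials and their derivatives at roots of unity. Your vanishing-order bookkeeping is also correct, and for $f_n={an\brack bn}_q$ (part~(1)) your outline is essentially the paper's proof via Corollary~\ref{cor:binomder}.

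The genuine gap is your treatment of the ``stray'' contribution to $\omega^2 a_n''(\omega)$ from the summands with $d\nmid k$ (for $g_n$) or with exactly one of $d\nmid k$, $d\nmid\ell$ (for $h_n$). You correctly flag that these terms individually contain a factor such as $\omega^{-2\binom{k}{2}}/(\omega^k-1)^2$ that is not Galois-invariant, but your proposed resolution---a differentiated $q$-Lucas identity, or recurrence/generating-function structure of the $q$-Ap\'ery and $q$-Almkvist--Zudilin numbers---is not the mechanism that works, and you do not carry it out. The paper resolves this by a direct computation that hinges on the \emph{specific} shape of $f$: since $f(x,y)=y^2+Axy+Bx^2$ and $\omega^n=1$, one has $\omega^{f(n,k)}=\omega^{k^2}$, and this exactly cancels the $\omega^{-k(k-1)}=\omega^{-2\binom{k}{2}}$ coming from $(\omega{n\brack k}_\omega')^2$ via Corollary~\ref{cor:binomder}, leaving $\omega^{k}/(\omega^k-1)^2$. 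Writing $k=dj+i$ with $1\le i\le d-1$, the binomial factors depend only on $j$ while $\omega^k=\omega^i$, so the stray sum \emph{factors} as
\[
\Big(\text{rational function of }n,d\Big)\times\sum_{i=1}^{d-1}\frac{\omega^i}{(\omega^i-1)^2},
\]
and the inner sum is the rational number $-(d^2-1)/12$ by Lemma~\ref{lem:rootsunitysum} (this is where your identity $\sum_{i}\omega^i(1-\omega^i)^{-2}\in\mathbb{Q}$ actually enters, not in the main terms). The analogous cancellation for $h_n$ uses that the coefficients of $y^2$ and $z^2$ in $f(x,y,z)$ are both exactly $1$. So no telescoping or recurrence is needed; what is needed is to observe that the hypothesis on $f$ is not incidental but is precisely what makes the non-invariant pieces cancel before the root-of-unity sum is taken.
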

From Theorem~\ref{thm:superqexamples} and \eqref{eq:Crithigher} we have the following corollary.
\begin{cor}\label{cor:superexamples}
Let $p \ge 5$ be a prime. Then, in the notation of Theorem~\ref{thm:superqexamples}, for all $n,k\ge 1$ we have
\begin{align}
\label{eq:conseqbinomq}
\binom{ap^k}{bp^k} &\equiv \binom{ap^{k-1}}{bp^{k-1}} \bmod p^{3k},\\
g_{np^k}(1) &\equiv g_{np^{k-1}}(1) \bmod p^{3k},\label{eq:conseqaperyq}\\
\label{eq:conseqzetaq} h_{np^k}(1) & \equiv h_{np^{k-1}}(1) \bmod p^{3k}. 
\end{align}
\end{cor}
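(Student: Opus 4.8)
The plan is to deduce the corollary formally from two ingredients already recorded above: Theorem~\ref{thm:superqexamples}, which states that each of the sequences $\{f_n(q)\}_{n\ge1}$, $\{g_n(q)\}_{n\ge1}$, $\{h_n(q)\}_{n\ge1}$ satisfies the $q$-Gauss congruences of order $3$, and the implication~\eqref{eq:Crithigher} (to be proved in Theorem~\ref{thmexpansion}), which says that any sequence satisfying the $q$-Gauss congruences of order $r$ obeys $a_{np^k}(1)\equiv a_{np^{k-1}}(1)\bmod p^{rk}$ for every prime $p\ge r+1$ and all $n,k\ge1$. One applies this with $r=3$.

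First I would check the hypothesis on the prime: we need $p\ge r+1=4$, and this holds because we are assuming $p\ge 5$. Next, applying~\eqref{eq:Crithigher} to $\{g_n(q)\}$ and to $\{h_n(q)\}$ immediately yields~\eqref{eq:conseqaperyq} and~\eqref{eq:conseqzetaq}, since $g_n(1)$ and $h_n(1)$ are precisely the integer specializations appearing there. For $\{f_n(q)\}$ one has $f_n(1)=\binom{an}{bn}$, so~\eqref{eq:Crithigher} gives $\binom{amp^k}{bmp^k}\equiv\binom{amp^{k-1}}{bmp^{k-1}}\bmod p^{3k}$ for all $m,k\ge1$; taking $m=1$ recovers~\eqref{eq:conseqbinomq}. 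This completes the argument.

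At the level of the corollary there is essentially nothing to obstruct: it is a direct specialization of results proved elsewhere in the paper. The substance lies upstream — in Theorem~\ref{thm:superqexamples}, where the order-$3$ condition of Definition~\ref{def:deforder} must be verified for these combinatorial $q$-analogues (which I expect to require controlling $a_n^{(1)}$ and $a_n^{(2)}$ at arbitrary elements of $\mu_n$, via $q$-binomial and $q$-trinomial identities together with Pan's congruence~\eqref{eq:panen}), and in Theorem~\ref{thmexpansion}, which furnishes the general passage from vanishing data of order $r$ at roots of unity to a $p$-adic supercongruence with modulus $p^{rk}$. The one point in the present deduction that warrants a moment's attention is simply confirming that the relevant order is exactly $3$, so that the modest bound $p\ge 5$ suffices rather than something larger.
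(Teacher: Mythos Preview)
Your deduction is correct and matches the paper's own reasoning: the corollary is obtained directly by combining Theorem~\ref{thm:superqexamples} with the implication~\eqref{eq:Crithigher} at $r=3$, specializing to $m=1$ for the binomial case. (One small aside: your parenthetical guess that Pan's congruence~\eqref{eq:panen} enters the upstream verification is off --- that result is used only for the $q$-Lucas sequence $e_n(q)$, whereas Theorems~\ref{thm:superqbinom}--\ref{thm:zeta} rely instead on explicit formulas for derivatives of $q$-binomial coefficients at roots of unity.)
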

The supercongruence \eqref{eq:conseqbinomq} is attributed to Ljunggren and Jacobsthal \cite{brun1949}. The sequence $g_n(q)$ is a $q$-analogue of 
\begin{equation*}
g_n(1) = \sum_{k=0}^{n} \binom{n}{k}^2 \binom{n+k}{k}^2,
\end{equation*}
a sequence of integers named ``Ap{\'e}ry numbers" after Roger Ap{\'e}ry, who introduced them in his proof of the irrationality of $\zeta(3)$ \cite{apery1979},  where $\zeta$ is the Riemann zeta function. The supercongruence \eqref{eq:conseqaperyq} was proved by Beukers \cite{beukers1985} and Coster  \cite{coster1988}. Before \eqref{eq:conseqaperyq} was proved, the special case $k=1$ was conjectured by Chowla, Cowles and Cowles \cite{chowla1980} and proved by Gessel \cite{gessel1982} and Mimura \cite{mimura1983}. Ap{\'e}ry has shown that the sequence $\{g_n(1)\}_{n\ge 1}$ satisfies the recurrence relation
\begin{equation*}
(n+1)^3 u_{n+1} - (2n+1)(17n^2+17n+5)u_n + n^3 u_{n-1}=0, \qquad (u_0=0,u_1=5),
\end{equation*}
which implies that the generating function $F(x)= \sum_{n \ge 1} g_n(1) x^n$ satisfies a third-order differential equation. The function $F(x)$ also enjoys a `modular parameterization', see \cite{beukers1987}. The supercongruence \eqref{eq:conseqzetaq} is new. The sequence $h_n(q)$ is a $q$-analogue of 
\begin{equation*}
h_n(1) = \sum_{k,\ell} \binom{n}{k}^2 \binom{n}{\ell} \binom{k}{\ell} \binom{k+\ell}{n},
\end{equation*}
a sequence of integers introduced originally by Almkvist and Zudilin in their study of Calabi-Yau differential equations \cite{almkvist2006}. The sequence is denoted there by the letter $\zeta$, and this is the way it is referred to in the literature \cite{almkvist2011, osburn2016, malik2016}. Almkvist and Zudilin found the sequence by searching, empirically, for \emph{integer} sequences satisfying the recurrence
\begin{equation*}
(n + 1)^3 u_{n+1} - (2n+1)(an^2+an+b)u_n +cn^3 u_{n-1} =0
\end{equation*}
for some $a,b,c \in \mathbb{Z}$ (see Zagier \cite{zagier2009} and Cooper \cite{cooper2012} for related searches). They found 5 solutions apart from the Ap{\'e}ry numbers (cf. \cite[Eq.~(4.12)]{almkvist2011}, \cite[Table~2]{osburn2016}, \cite[Table~2]{malik2016}), which are often referred to as Ap{\'e}ry-like sequences. One of them is $h_n(1)$, corresponding to $(a,b,c)=(9,3,-27)$. These 5 sequences are conjectured to satisfy the supercongruences
\begin{equation}\label{eq:conjsuper}
u_{np^k} \equiv u_{np^{k-1}} \bmod p^{3k}.
\end{equation}
for all $n,k \ge 1$ and primes $p \ge 5$. For three out of these 5 sequences, \eqref{eq:conjsuper} was proved by Osburn and Sahu \cite{osburn2013} and Osburn, Sahu and Straub \cite{osburn2016}. For another one of these sequences, \eqref{eq:conjsuper} was proved in the case $k=1$ by Amdeberhan and Tauraso \cite{amdeberhan2016}. Malik and Straub \cite[Thm.~3.1]{malik2016} have 
proved that all 5 Ap\'{e}ry-like sequences satisfy Lucas-type congruences.

Theorem~\ref{thmexpansion} below gives us much more than integer congruences. It uses the fact that $a_n(q)$ satisfies the $q$-Gauss congruences of order $r$ to construct an explicit formula for the remainder of $a_{np^k}(q)$ upon division by $[p^k]_q^{r}$ ($p \ge r+1$), let us denote it by $r(q)$. The supercongruence  \eqref{eq:Crithigher} is deduced by substituting $q=1$ in the $q$-supercongruence $a_{np^k}(q) \equiv r(q) \bmod [p^k]_q^{r}$. The details are provided in the next section.
\begin{remark}
Theorem~\ref{thmexpansion} determines not only the remainder of $a_{np^k}(q)$ modulo $[p^k]_q^{r}$ for primes $p \ge r+1$, but actually the remainder of $a_{nm}(q)$ modulo $[m]_q^{r}$ for any $m \ge 1$ which is not divisible by primes less than $r+1$.
\end{remark}
\section{Methods}
Most of the proofs of the results in \S\ref{secrel}--\ref{secre2} follow from Corollaries~\ref{cor:sp} and \ref{corexpansion}, which themselves follow from more general results that we discuss here. To state these results, we introduce some new notions.
\subsection{Gauss congruences with respect to a set of primes}
Let $\mathbb{P}$ denote the set of primes. Given $S \subseteq \mathbb{P}$, we denote by $\mathbb{N}_{S}$ the set of positive integers divisible only by primes from $S$. 
\begin{definition}
Let $S \subseteq \mathbb{P}$. A sequence of integers $\{ a_n\}_{n \ge 1}$ is said to satisfy the `Gauss congruences with respect to $S$' if, for all $n \in \mathbb{N}_S$ and $m \ge 1$,
\begin{equation}\label{weakneckcond}
\sum_{d \mid n} \mu(d) a_{\frac{nm}{d}} \equiv 0 \bmod n.
\end{equation}
A sequence of polynomials $\{a_n(q)\}_{n \ge 1} \subseteq \mathbb{Z}[q]$ is said to satisfy the `$q$-Gauss congruences with respect to $S$' if, for all $n \in \mathbb{N}_S$ and $m \ge 1$,
\begin{equation}\label{qweakneckcond}
\sum_{d \mid n} \mu(d) a_{\frac{nm}{d}}(q^d) \equiv 0 \bmod [n]_q.
\end{equation}
\end{definition}
We see that if $\{ a_n(q) \}_{n \ge 1}$ satisfies the $q$-Gauss congruences with respect to $S$, then $\{a_n(1)\}_{n \ge 1}$ satisfies the Gauss congruences with respect to $S$. In the special case $S=\{p \}$, \eqref{weakneckcond} becomes \eqref{primecond} with the prime $p$ fixed, and \eqref{qweakneckcond} becomes \eqref{eq:qanaloguepower} with the prime $p$ fixed. In \S\ref{seccritgauss}, we prove the following.
\begin{lem}\label{lem:equivs}\hspace{2em}
\begin{enumerate}
\item A sequence $\{a_n\}_{n \ge 1} \subseteq \mathbb{Z}$ satisfies the Gauss congruences with respect to $S \subseteq \mathbb{P}$ if and only if \eqref{primecond} holds for all $p \in S$.
\item A sequence $\{ a_n\}_{n \ge 1} \subseteq \mathbb{Z}$ satisfies the Gauss congruences with respect to $\mathbb{P}$ if and only if it satisfies the Gauss congruences.
\end{enumerate}
\end{lem}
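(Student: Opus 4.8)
The plan is to prove part (1) directly and then deduce part (2) from it together with a short M\"obius-inversion computation.

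For the forward direction of (1), I would assume the Gauss congruences with respect to $S$ and fix $p\in S$ and $n,k\ge 1$. Applying \eqref{weakneckcond} with $p^k$ (which lies in $\mathbb{N}_S$) in the role of $n$ and with $n$ in the role of $m$, only the divisors $1$ and $p$ of $p^k$ contribute, since $\mu$ vanishes on the others; the left-hand side therefore collapses to $a_{p^k n}-a_{p^{k-1}n}$, which is precisely \eqref{primecond}. For the converse, assume \eqref{primecond} for every $p\in S$, fix $N\in\mathbb{N}_S$ and $m\ge1$, and reduce by the Chinese Remainder Theorem to showing that $p^k$ divides $\sum_{d\mid N}\mu(d)a_{Nm/d}$ for each prime power $p^k$ exactly dividing $N$. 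Writing $N=p^kN'$ with $p\nmid N'$, every divisor $d$ of $N$ with $\mu(d)\ne0$ is uniquely of the form $d'$ or $pd'$ with $d'\mid N'$, and $\mu(pd')=-\mu(d')$; grouping the two cases rewrites the sum as
\[
\sum_{d'\mid N'}\mu(d')\bigl(a_{Nm/d'}-a_{Nm/(pd')}\bigr).
\]
Since $Nm/d'=p^k(N'm/d')$ and $Nm/(pd')=p^{k-1}(N'm/d')$, each bracketed term is $\equiv0\bmod p^k$ by \eqref{primecond}, and so is the whole sum. This proves (1).

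For (2), since $\mathbb{N}_{\mathbb{P}}$ consists of all positive integers, specializing $m=1$ in \eqref{weakneckcond} recovers \eqref{neckcond}, so the Gauss congruences with respect to $\mathbb{P}$ trivially imply the ordinary Gauss congruences. For the reverse implication, by part (1) with $S=\mathbb{P}$ it suffices to derive \eqref{primecond} for every prime from \eqref{neckcond}. Put $b_\ell=\sum_{d\mid\ell}\mu(d)a_{\ell/d}$, so that \eqref{neckcond} reads $\ell\mid b_\ell$ for all $\ell$, while M\"obius inversion gives $a_n=\sum_{e\mid n}b_e$. Fixing a prime $p$ and writing $n=p^an'$ with $p\nmid n'$, the divisors of $p^kn=p^{k+a}n'$ that fail to divide $p^{k-1}n=p^{k+a-1}n'$ are exactly those of the form $p^{k+a}e$ with $e\mid n'$, whence $a_{p^kn}-a_{p^{k-1}n}=\sum_{e\mid n'}b_{p^{k+a}e}$; each summand is divisible by $p^{k+a}e$ and hence by $p^k$, which gives \eqref{primecond}.

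The whole argument is elementary; the only steps needing attention are the combinatorial bookkeeping with divisors and the M\"obius function — namely, checking that the reduction to prime-power moduli in (1) is compatible with the pairing $d'\leftrightarrow pd'$, and pinning down which divisors are ``new'' in the telescoping difference in (2). I do not anticipate any obstacle beyond this routine bookkeeping; note also that the computation in (2) reproves one direction of Proposition~\ref{gausscrit}.
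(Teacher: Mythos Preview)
Your proof is correct. Part~(1) follows the same argument as the paper: the forward direction specializes \eqref{weakneckcond} at a prime power, and the converse pairs squarefree divisors as $d'\leftrightarrow pd'$ to reduce modulo each prime power separately.

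Part~(2) differs slightly from the paper in the reverse implication. The paper invokes Proposition~\ref{gausscrit} twice: once to pass from \eqref{neckcond} to \eqref{primecond}, and once more---applied to the shifted sequence $b_n=a_{nm}$---to recover \eqref{weakneckcond} for arbitrary $m$. Your route is more self-contained: you reduce, via part~(1), to showing that \eqref{neckcond} implies \eqref{primecond}, and you prove this directly by M\"obius-inverting to write $a_{p^kn}-a_{p^{k-1}n}$ as a sum of the ``new'' terms $b_{p^{k+a}e}$. This avoids any appeal to Proposition~\ref{gausscrit} (in particular to its generating-function formulation), at the cost of reproving one of its implications by hand; either approach is entirely routine.
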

We have a partial $q$-analogue of Lemma~\ref{lem:equivs}, proved in \S\ref{secchar}.
\begin{lem}\label{lem:equivsq}\hspace{2em}
\begin{enumerate}
\item If a sequence $\{a_n(q)\}_{n \ge 1} \subseteq \mathbb{Z}[q]$ satisfies the $q$-Gauss congruences with respect to $S \subseteq \mathbb{P}$ then \eqref{eq:qanaloguepower} holds for all $p \in S$ and all $n,k \ge 1$.
\item A sequence $\{ a_n(q)\}_{n \ge 1} \subseteq \mathbb{Z}[q]$ satisfies the $q$-Gauss congruences with respect to $\mathbb{P}$ if and only if it satisfies the $q$-Gauss congruences.
\end{enumerate}
\end{lem}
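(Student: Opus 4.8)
The plan is to prove the two parts separately, splitting part~(2) into its two implications. Throughout write $R_N(q)=\sum_{d\mid N}\mu(d)\,a_{N/d}(q^d)$ for the $N$-th $q$-Gauss sum of a sequence $\{a_n(q)\}$, so the $q$-Gauss congruences say $R_N(q)\equiv 0\bmod[N]_q$ for all $N$. Part~(1) is a direct specialization of \eqref{qweakneckcond}: given $p\in S$ and $n,k\ge1$, apply it with index $p^{k}\in\mathbb{N}_S$ and multiplier $m=n$; since $\mu(d)=0$ for $d\mid p^{k}$ unless $d\in\{1,p\}$, the left-hand side collapses to $a_{p^{k}n}(q)-a_{p^{k-1}n}(q^{p})$, which is therefore divisible by $[p^{k}]_q$, that is, \eqref{eq:qanaloguepower} holds. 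In part~(2) the implication ``$\Rightarrow$'' is immediate on taking $m=1$ in \eqref{qweakneckcond} (note $\mathbb{N}_{\mathbb{P}}=\mathbb{Z}_{\ge1}$), so the content is ``$\Leftarrow$''.

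For ``$\Leftarrow$'' I would first observe that, for a fixed multiplier $M$, the expression $\sum_{d\mid N}\mu(d)\,a_{NM/d}(q^{d})$ is exactly the $N$-th $q$-Gauss sum of the shifted sequence $\{a_{M\ell}(q)\}_{\ell\ge1}$. So it suffices to show that if $\{a_n(q)\}$ satisfies the $q$-Gauss congruences then so does $\{a_{M\ell}(q)\}_{\ell\ge1}$, for every $M\ge1$; and factoring $M=p_1p_2\cdots p_r$ into primes and shifting one prime at a time --- shift by $p_1$, then shift the resulting sequence by $p_2$, and so on --- reduces this to the case $M=p$ prime. The crux is therefore: if $\{a_n(q)\}$ satisfies the $q$-Gauss congruences, then $\sum_{d\mid N}\mu(d)\,a_{pN/d}(q^{d})\equiv 0\bmod[N]_q$ for every $N$ and every prime $p$.

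I would establish this by distinguishing $p\mid N$ from $p\nmid N$. If $N=p^{k}n$ with $p\nmid n$, $k\ge1$, then writing each squarefree divisor of $p^{k}n$ as $d'$ or $pd'$ with $d'$ a squarefree divisor of $n$ shows, after reindexing, that $\sum_{d\mid N}\mu(d)\,a_{pN/d}(q^{d})$ is term-by-term equal to $R_{p^{k+1}n}(q)$, which is $\equiv0\bmod[p^{k+1}n]_q$ hence also $\bmod[N]_q$. If $p\nmid N$, the same bookkeeping applied to the divisors of $pN$ gives the identity $\sum_{d\mid N}\mu(d)\,a_{pN/d}(q^{d})=R_{pN}(q)+R_N(q^{p})$, where $R_N(q^{p})$ denotes $R_N$ with $q$ replaced by $q^{p}$. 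Now $R_{pN}(q)\equiv0\bmod[N]_q$ since $[N]_q\mid[pN]_q$, and writing $R_N(q)=[N]_q\,S(q)$ with $S\in\mathbb{Z}[q]$ gives $R_N(q^{p})=[N]_{q^{p}}\,S(q^{p})$, so the claim reduces to the divisibility $[N]_q\mid[N]_{q^{p}}$.

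That cyclotomic divisibility is, I expect, the only genuinely delicate point, and it is also what forces the case split: one computes $[N]_{q^{p}}/[N]_q=[p]_{q^{N}}/[p]_q$, and when $p\nmid N$ the divisors of $pN$ not dividing $N$ are exactly the $pe'$ with $e'\mid N$, so $[p]_{q^{N}}=\prod_{e'\mid N}\Phi_{pe'}(q)$ has $\Phi_p(q)=[p]_q$ as its $e'=1$ factor; hence the quotient lies in $\mathbb{Z}[q]$ (with integrality supplied by Gauss's lemma, as elsewhere in the paper). When $p\mid N$ this argument breaks, since $\Phi_p$ is no longer among those factors, which is precisely why that case must be handled separately --- and fortunately there the desired congruence drops out immediately from the hypothesis at the larger index $pN$. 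An alternative, once criterion \eqref{eq:Critanomega} is available, is that ``$\Leftarrow$'' follows in one line: that criterion shows at once that any shift $\{a_{M\ell}(q)\}_\ell$ of a $q$-Gauss sequence is again $q$-Gauss, because $a_{M\ell}(\omega)=a_{M\ell/\mathrm{ord}(\omega)}(1)=a_{M(\ell/\mathrm{ord}(\omega))}(1)$ for $\omega\in\mu_\ell\subseteq\mu_{M\ell}$; but the Möbius-sum route above is self-contained.
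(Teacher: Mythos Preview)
Your proof is correct. Part~(1) and the ``$\Rightarrow$'' direction of part~(2) are exactly as in the paper. For ``$\Leftarrow$'' in part~(2), however, your main argument differs from the paper's: the paper invokes Proposition~\ref{propcharweak} (the root-of-unity characterization), observing that its proof works with $m$ held fixed, so from the $m=1$ hypothesis one extracts $a_n(\omega_n^i)=a_{(n,i)}(1)$ for all $n,i$, which in turn (as you note at the very end) immediately gives the same identity for every shift $\{a_{M\ell}(q)\}_\ell$ and hence the $q$-Gauss congruences with respect to $\mathbb{P}$. Your primary route instead stays entirely at the level of M\"obius sums and cyclotomic divisibilities, reducing to the prime-shift case and then handling $p\mid N$ (where the shifted sum literally equals $R_{pN}(q)$) and $p\nmid N$ (where it equals $R_{pN}(q)+R_N(q^p)$ and one needs $[N]_q\mid[N]_{q^p}$). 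This is more hands-on but self-contained: it does not rely on Proposition~\ref{propcharweak}, whereas the paper's argument does. The alternative you sketch in your final sentence, via criterion~\eqref{eq:Critanomega}, is essentially the paper's own proof.
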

In view of the second part of Lemma~\ref{lem:equivsq}, whenever we prove a theorem on sequences satisfying the $q$-Gauss congruences with respect to an arbitrary $S\subseteq \mathbb{P}$, we also obtain, in the special case $S=\mathbb{P}$, a result on sequences satisfying the $q$-Gauss congruences.
\subsection{General results}
 In \S\ref{secchar} we prove the following characterization.
\begin{proposition}\label{propcharweak}
Let $S \subseteq \mathbb{P}$ and $\{a_n(q)\}_{n \ge 1} \subseteq \mathbb{Z}[q]$. The following are equivalent.
\begin{enumerate}
\item $\{a_n(q)\}_{n \ge 1}$ satisfies the $q$-Gauss congruences with respect to $S$.
\item For all $m\ge 1$ and $n\in \mathbb{N}_S$, and every $\omega \in \mu_n$, we have \begin{equation}\label{charweak}
a_{nm}(\omega) =a_{\frac{nm}{\mathrm{ord}(\omega)}}(1).
\end{equation}
\item For all $m\ge 1$ and $n\in \mathbb{N}_S$, and every $d \mid n$, we have
\begin{equation*}
a_{nm}(q)  \equiv a_{\frac{nm}{d}}(1) \bmod \Phi_{d}(q).
\end{equation*}
\end{enumerate}
\end{proposition}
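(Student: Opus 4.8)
The plan is to convert both congruence conditions into statements about values at roots of unity, and then deduce everything from the elementary identity $\sum_{d \mid N} \mu(d) = [N=1]$. First I would record the dictionary. Since $[n]_q = \prod_{1 < d \mid n} \Phi_d(q) = \prod_{\omega \in \mu_n \setminus \{1\}} (q - \omega)$, and since $\Phi_d$ is the minimal polynomial over $\mathbb{Q}$ of a primitive $d$-th root of unity, Gauss's lemma (all moduli here are monic) gives, for $g \in \mathbb{Z}[q]$: $g \equiv 0 \bmod [n]_q$ iff $g(\omega) = 0$ for every $\omega \in \mu_n \setminus \{1\}$; and $g \equiv c \bmod \Phi_d(q)$ for a constant $c$ iff $g(\omega) = c$ for one (equivalently every) primitive $d$-th root of unity $\omega$. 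As every $\omega \in \mu_n$ is a primitive $\mathrm{ord}(\omega)$-th root of unity with $\mathrm{ord}(\omega) \mid n$, and conversely, this translates condition (3) verbatim into condition (2). It likewise rewrites condition (1) as the condition (1$'$): for all $n \in \mathbb{N}_S$, $m \ge 1$ and $\omega \in \mu_n \setminus \{1\}$ one has $\sum_{d \mid n} \mu(d)\, a_{nm/d}(\omega^d) = 0$. So it remains to prove (1$'$)$\Leftrightarrow$(2).

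For (2)$\Rightarrow$(1$'$), fix such $n, m, \omega$ and set $e = \mathrm{ord}(\omega) > 1$; since $e \mid n$ we also have $e \in \mathbb{N}_S$. Only squarefree $d \mid n$ contribute, and each such $d$ factors uniquely as $d = d_1 d_2$ with $d_1$ the product of the primes of $d$ dividing $e$ and $d_2$ the product of the remaining primes of $d$; as $d$ ranges over squarefree divisors of $n$, the pair $(d_1, d_2)$ ranges independently over the squarefree divisors of $\mathrm{rad}(e) := \prod_{p \mid e} p$ and the squarefree divisors of $n$ coprime to $e$. Because $d_1 \mid \mathrm{rad}(e) \mid e$ while $\gcd(d_2, e) = 1$, we get $\gcd(d, e) = d_1$, hence $\mathrm{ord}(\omega^d) = e/d_1$; applying (2) to the term $a_{nm/d}(\omega^d)$ — legitimate since $nm/d = (e/d_1)\cdot\bigl(nm/(d_2 e)\bigr)$ with $e/d_1 \in \mathbb{N}_S$ and $\omega^d \in \mu_{e/d_1}$, and $nm/(d_2 e) \in \mathbb{Z}$ because $d_2 e = \mathrm{lcm}(d_2, e) \mid n$ — shows $a_{nm/d}(\omega^d) = a_{nm/(d_2 e)}(1)$, which is independent of $d_1$. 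Therefore the sum factors as $\bigl(\sum_{d_1 \mid \mathrm{rad}(e)} \mu(d_1)\bigr)\bigl(\sum_{d_2} \mu(d_2)\, a_{nm/(d_2 e)}(1)\bigr)$, and the first factor is $0$ since $\mathrm{rad}(e) > 1$.

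For (1$'$)$\Rightarrow$(2) I would argue by strong induction on $e = \mathrm{ord}(\omega)$, the case $e = 1$ being trivial. For $e > 1$, apply (1$'$) with $n$ replaced by $e$ (valid, as $e \mid n$) and $m$ replaced by $nm/e$, evaluated at $\omega$ itself, which lies in $\mu_e \setminus \{1\}$: this gives $\sum_{d \mid e} \mu(d)\, a_{nm/d}(\omega^d) = 0$. For $d \mid e$ with $d > 1$ one has $\mathrm{ord}(\omega^d) = e/d < e$, so the inductive hypothesis yields $a_{nm/d}(\omega^d) = a_{nm/e}(1)$; separating off the $d = 1$ term and using $\sum_{d \mid e} \mu(d) = 0$ leaves $a_{nm}(\omega) - a_{nm/e}(1) = 0$, as wanted.

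The dictionary of the first paragraph and the induction of the third are routine. The one step that needs care is the bookkeeping in (2)$\Rightarrow$(1$'$): observing that $a_{nm/d}(\omega^d)$ depends on $d$ only through its part coprime to $e$, which is exactly what makes the telescoping factor $\sum_{d_1 \mid \mathrm{rad}(e)} \mu(d_1)$ appear, together with the routine but necessary checks that membership in $\mathbb{N}_S$ and the integrality of indices such as $nm/(d_2 e)$ persist throughout (both because $e \mid n$ and $d_2 \mid n$ with $\gcd(d_2, e) = 1$).
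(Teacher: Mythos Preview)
Your proof is correct and follows the same overall strategy as the paper: translate conditions (1) and (3) into statements about values at roots of unity, then prove (1$'$)$\Leftrightarrow$(2) by M\"obius bookkeeping. The execution differs in two places, and in both yours is tidier. For (2)$\Rightarrow$(1$'$), the paper groups the sum by the value $f' = (n/d,i)$ and then argues, via an auxiliary integer $g$ and a case analysis, that each coefficient $\sum_{d:\,(n/d,i)=f'} \mu(d)$ vanishes; your factorisation $d = d_1 d_2$ with $d_1 \mid \mathrm{rad}(e)$ and $\gcd(d_2,e)=1$ makes the vanishing factor $\sum_{d_1 \mid \mathrm{rad}(e)} \mu(d_1) = 0$ appear immediately. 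For (1$'$)$\Rightarrow$(2), the paper inducts on $n \in \mathbb{N}_S$ and, at the inductive step, must re-invoke the coefficient computation from the other direction; you induct on $e = \mathrm{ord}(\omega)$ and apply (1$'$) with $n$ replaced by $e$, so that every $d>1$ term reduces directly by the inductive hypothesis to the common value $a_{nm/e}(1)$, and the identity $\sum_{d \mid e} \mu(d) = 0$ finishes without further work. Both are minor streamlinings of the same idea rather than a different method.
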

In particular, we have the following immediate corollary of Lemma~\ref{lem:equivsq} and Proposition~\ref{propcharweak}, obtained by taking $S=\mathbb{P}$ in Proposition~\ref{propcharweak}.
\begin{cor}\label{cor:sp}
The sequence $\{a_n(q)\}_{n \ge 1} \subseteq \mathbb{Z}[q]$ satisfies the $q$-Gauss congruences if and only if, for all $n,i \ge 1$, we have
\begin{equation*}
a_n(\omega_n^i) = a_{(n,i)}(1),
\end{equation*}
which holds if and only if, for all $n \ge 1$ and every $d \mid n$,  we have
\begin{equation*}
a_n(q) \equiv a_{n/d}(1) \bmod \Phi_d(q).
\end{equation*}
\end{cor}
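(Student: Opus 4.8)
The plan is to obtain Corollary \ref{cor:sp} as the special case $S=\mathbb{P}$ of Proposition \ref{propcharweak}, after using Lemma \ref{lem:equivsq}(2) to replace ``$q$-Gauss congruences'' by ``$q$-Gauss congruences with respect to $\mathbb{P}$'', and then simplifying the resulting conditions by eliminating the auxiliary parameter $m$. First I would note that $\mathbb{N}_{\mathbb{P}} = \{1,2,3,\dots\}$, since every positive integer is a product of primes (with $1$ the empty product). Hence condition (2) of Proposition \ref{propcharweak}, specialized to $S=\mathbb{P}$, reads: for all $m,n\ge 1$ and every $\omega\in\mu_n$, $a_{nm}(\omega)=a_{nm/\mathrm{ord}(\omega)}(1)$; and condition (3) reads: for all $m,n\ge 1$ and every $d\mid n$, $a_{nm}(q)\equiv a_{nm/d}(1)\bmod\Phi_d(q)$.

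Next I would show that the specialized condition (2) is equivalent to the first displayed criterion of the corollary. Taking $m=1$ and letting $n$ range over all positive integers gives $a_n(\omega)=a_{n/\mathrm{ord}(\omega)}(1)$ for every $n\ge 1$ and $\omega\in\mu_n$; writing $\omega=\omega_n^i$ and using $\mathrm{ord}(\omega_n^i)=n/(n,i)$, this is precisely $a_n(\omega_n^i)=a_{(n,i)}(1)$ for all $n,i\ge 1$. Conversely, assume $a_n(\omega_n^i)=a_{(n,i)}(1)$ for all $n,i\ge 1$, equivalently $a_n(\omega)=a_{n/\mathrm{ord}(\omega)}(1)$ for all $n\ge 1$, $\omega\in\mu_n$. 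Given arbitrary $m,n\ge 1$ and $\omega\in\mu_n$, the inclusion $\mu_n\subseteq\mu_{nm}$ shows $\omega\in\mu_{nm}$, so applying the hypothesis with $nm$ in place of $n$ yields $a_{nm}(\omega)=a_{nm/\mathrm{ord}(\omega)}(1)$, which is the specialized condition (2). The same elimination of $m$ handles condition (3): taking $m=1$ gives $a_n(q)\equiv a_{n/d}(1)\bmod\Phi_d(q)$ for all $n\ge 1$ and $d\mid n$, and conversely, if this holds, then for arbitrary $m,n$ and $d\mid n$ we have $d\mid n\mid nm$, so applying it with $nm$ in place of $n$ recovers the specialized condition (3). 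Chaining these equivalences with Proposition \ref{propcharweak} (and Lemma \ref{lem:equivsq}(2)) gives exactly the two stated characterizations.

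The main obstacle here is essentially nonexistent: this is a genuine corollary, and the only point requiring a moment of care is the bookkeeping with orders of roots of unity — namely the identity $\mathrm{ord}(\omega_n^i)=n/(n,i)$ and the inclusion $\mu_n\subseteq\mu_{nm}$ — which is what allows the auxiliary multiplier $m$ in Proposition \ref{propcharweak} to be absorbed.
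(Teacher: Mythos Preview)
Your argument is correct and follows exactly the route the paper indicates: the corollary is stated there as ``the immediate corollary of Lemma \ref{lem:equivsq} and Proposition \ref{propcharweak}, obtained by taking $S=\mathbb{P}$''. You have simply made explicit the one bookkeeping step the paper leaves implicit, namely that the auxiliary parameter $m$ in conditions (2) and (3) of Proposition \ref{propcharweak} can be absorbed via $\mu_n\subseteq\mu_{nm}$ (equivalently $d\mid n\mid nm$), so that the $m=1$ case already implies the general case.
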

\begin{example}
Let $a$ be a positive integer. Pan \cite{pan2008} studied $a_n(q)=\prod_{i=1}^{n} [a]_{q^i}$, a $q$-analogue of $a_n(1)=a^n$. He proved that
\begin{equation*}
\sum_{d \mid n}\mu(d) a_{n/d}(q^d) \equiv 0 \bmod [n]_{q^{\gcd(n,a)}}
\end{equation*}
for all $n \ge 1$. Using Proposition~\ref{propcharweak}, it can be shown quickly that $\{ a_n(q) \}_{n \ge 1}$ satisfies the $q$-Gauss congruences with respect to $S=\mathbb{P} \setminus \{ p \in \mathbb{P}: p \mid a \}$. Indeed, for all $m  \ge 1$, $n \in \mathbb{N}_S$ and $\omega \in \mu_n$, we have
\begin{equation*}
\begin{split}
a_{nm}(\omega) &= \prod_{i=1}^{nm} [a]_{\omega^i} = \left(\prod_{i=1}^{\mathrm{ord}(\omega)} [a]_{\omega^i} \right)^{nm/\mathrm{ord}(\omega)}\\
&= \left( a \prod_{i=1}^{\mathrm{ord}(\omega)-1} \frac{\omega^{ia}-1}{\omega^i-1}\right)^{nm/\mathrm{ord}(\omega)} = a^{nm/\mathrm{ord}(\omega)} = a_{nm/\mathrm{ord}(\omega)}(1).
\end{split}
\end{equation*} 
\end{example}
Our next proposition shows that if $\{ a_n(q)\}_{n \ge 1}$ satisfies the $q$-Gauss congruences with respect to $S$, then we can determine the remainder of $a_{nm}(q)$ upon division by $[n]_q$ as long as $n \in \mathbb{N}_S$. It will be convenient to introduce the following polynomials.
\begin{definition}\label{def:ggn}
For any $n \ge 1$, let $D_n=\{ d : d \text{ a divisior of }n, \, d \neq n\}$ be the set of proper divisors of $n$. For a function $g\colon D_n \to \mathbb{C}$, let
\begin{equation*}
G_{g,n}(q) = \sum_{d \in D_n} \frac{[n]_q}{[n/d]_q} \frac{\sum_{e \mid d} \mu(\frac{d}{e}) g(e)}{d} \in \mathbb{C}[q].
\end{equation*}
\end{definition}
The main property of $G_{g,n}$ is that the value of $G_{g,n}(\omega)$ for $\omega \in \mu_n$ depends only on the order of $\omega$. In \S\ref{secchar} we establish the following formula. 
\begin{proposition}\label{proprem}
Assume that the sequence $\{ a_n(q) \}_{n \ge 1} \subseteq \mathbb{Z}[q]$ satisfies the $q$-Gauss congruences with respect to $S \subseteq \mathbb{P}$. Let $n,m$ be positive integers with $n \in \mathbb{N}_{S}$. Then the remainder of $a_{nm}(q)$ upon division by $[n]_q$ is \begin{equation*}
a_{nm}(q) \equiv G_{g,n}(q)  \bmod [n]_q,
\end{equation*}
where
\begin{equation*}
g\colon D_n \to \mathbb{C}, \quad g(d) = a_{dm}(1).
\end{equation*}
\end{proposition}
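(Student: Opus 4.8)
The plan is to reduce the congruence to a finite list of evaluations at roots of unity and then close it with a short Möbius-inversion computation. Write $[n]_q=\prod_{1<d\mid n}\Phi_d(q)$; this polynomial is squarefree and its root set is $\mu_n\setminus\{1\}$, so for any $f\in\mathbb{Q}[q]$ the congruence $f(q)\equiv G_{g,n}(q)\bmod [n]_q$ is equivalent to the equalities $f(\omega)=G_{g,n}(\omega)$ for every $\omega\in\mu_n\setminus\{1\}$. I would also record two facts about $G_{g,n}$ itself. Since $g(d)=a_{dm}(1)\in\mathbb{Z}$, the coefficients $\big(\sum_{e\mid d}\mu(d/e)g(e)\big)/d$ are rational, so $G_{g,n}\in\mathbb{Q}[q]$; and for each proper divisor $d>1$ of $n$ the polynomial $[n]_q/[n/d]_q=1+q^{n/d}+\cdots+q^{(d-1)n/d}$ has degree $n-n/d\le n-p<n-1$, where $p$ is the least prime dividing $n$, while $d=1$ contributes the constant $1$. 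Hence $\deg G_{g,n}\le n-2<\deg[n]_q$, so once the congruence is established $G_{g,n}$ is genuinely the remainder (not merely congruent to it). The case $n=1$ is trivial ($D_1=\emptyset$, $[1]_q=1$), so from now on $n\ge 2$ and it remains to prove $a_{nm}(\omega)=G_{g,n}(\omega)$ for every $\omega\in\mu_n\setminus\{1\}$.

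Fix such an $\omega$, of order $c$, so $1<c\mid n$, and evaluate $G_{g,n}(\omega)$ term by term. Because $\omega^n=1$, the numerator of $[n]_q/[n/d]_q=(q^n-1)/(q^{n/d}-1)$ vanishes at $\omega$; thus the $d$-th summand survives only when the denominator vanishes too, i.e. when $\omega^{n/d}=1$, equivalently $c\mid n/d$, equivalently $d\mid n/c$ (and since $c>1$ this already forces $d<n$, so the proper-divisor restriction costs nothing). For such $d$, substituting $q=\omega$ into $1+q^{n/d}+\cdots+q^{(d-1)n/d}$ gives $d$ — each of the $d$ terms is $1$ — which cancels the factor $1/d$, so
\begin{equation*}
G_{g,n}(\omega)=\sum_{d\mid n/c}\ \sum_{e\mid d}\mu\!\left(\frac{d}{e}\right)g(e).
\end{equation*}
Interchanging the two sums and using $\sum_{f\mid N}\mu(f)=[\,N=1\,]$ with $N=n/c$ collapses the inner sum, leaving $G_{g,n}(\omega)=g(n/c)=a_{(n/c)m}(1)=a_{nm/c}(1)$ (note $n/c\in D_n$ since $c>1$).

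To finish, I invoke Proposition~\ref{propcharweak} with the given $S$ and $n\in\mathbb{N}_S$: the hypothesis that $\{a_n(q)\}_{n\ge1}$ satisfies the $q$-Gauss congruences with respect to $S$ gives $a_{nm}(\omega)=a_{nm/\mathrm{ord}(\omega)}(1)=a_{nm/c}(1)$ for every $\omega\in\mu_n$. Comparing with the previous paragraph, $a_{nm}(\omega)=G_{g,n}(\omega)$ for all $\omega\in\mu_n\setminus\{1\}$, which is exactly the required congruence. The only steps that need genuine care are the bookkeeping ones: treating $[n]_q/[n/d]_q$ as an honest polynomial so that the apparent $0/0$ at $q=\omega$ resolves to the value $d$, and verifying the equivalence $c\mid n/d\iff d\mid n/c$ so that the surviving index set is precisely the set of divisors of $n/c$; once these are in place, the Möbius inversion and the appeal to Proposition~\ref{propcharweak} are immediate.
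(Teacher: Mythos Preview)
Your proof is correct and follows essentially the same approach as the paper: the paper packages the evaluation of $G_{g,n}$ at roots of unity and the degree bound into a separate Lemma~\ref{lemrem}, and then applies it with $f(q)=a_{nm}(q)$, invoking Proposition~\ref{propcharweak} to identify $f(\omega_n^e)=a_{em}(1)$; you have simply inlined that lemma. One tiny slip of phrasing: in the M\"obius step you write ``with $N=n/c$'', but after interchanging, the inner sum is $\sum_{d'\mid (n/c)/e}\mu(d')$, so the relevant $N$ is $(n/c)/e$; the conclusion $G_{g,n}(\omega)=g(n/c)$ is of course correct.
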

Our next theorem concerns sequences satisfying the $q$-Gauss congruences of order $r$ with respect to $S \subseteq \mathbb{P}$. This notion generalizes Definition \ref{def:deforder}, which corresponds to the special case $S= \mathbb{P}$. 
\begin{definition}
Let $S \subseteq \mathbb{P}$ and $r$ a positive integer. A sequence $\{ a_n(q) \}_{n \ge 1} \subseteq \mathbb{Z}[q]$ is said to satisfy the `$q$-Gauss congruences of order $r$ with respect to $S$' if it satisfies the $q$-Gauss congruences with respect to $S$, and in addition, for all $ m \ge 1$, $n \in \mathbb{N}_S$ and $1 \le j \le r-1$, the function
\begin{equation*}
f_{n,m,j}\colon \mu_n \to \mathbb{C}, \quad \omega \mapsto \omega^{j} a_{nm}^{(j)}(\omega)
\end{equation*}
depends only on $\mathrm{ord}(\omega)$.
\end{definition}
The next theorem shows that if $\{ a_n(q)\}_{n \ge 1}$ satisfies the $q$-Gauss congruences of order $r$ with respect to $S$, then we can determine the remainder of $a_{nm}(q)$ upon division by $[n]_q^{r}$ as long as $n \in \mathbb{N}_S$ and $n$ is not divisible by primes less than $r+1$. Our theorem is most easily stated using the notion of $[n]_q$-digits of a polynomial. 
\begin{definition}
For any polynomial $F(q) \in \mathbb{C}[q]$ and any integer $n>1$, we may expand $F$ in base $[n]_q$, that is, we may write
\begin{equation*}
F(q) = \sum_{i=0}^{\lfloor \deg F / (n-1) \rfloor} f_i(q) [n]^i_{q},
\end{equation*}
with $\deg f_i(q) < n-1$. For every $i$, $f_i(q)$ is unique and we refer to it as the \emph{$i$-th $[n]_q$-digit of $F$}. We define $f_i =0$ for $i> \lfloor \deg F / (n-1) \rfloor$.
\end{definition}
If $F(q) \in \mathbb{Z}[q]$, then since $[n]_q$ is monic, the polynomials $\{ f_i(q) \}_{i=0}^{\lfloor \deg F / (n-1)\rfloor}$ \emph{must} have integer coefficients. In particular, for all $k \ge 0$,
\begin{equation*}
F(1) \equiv \sum_{i=0}^{k-1} f_i(1) n^i \bmod n^k.
\end{equation*}
We see that determining $[n]_q$-digits of $F$ gives us information on $F(1)$ modulo higher powers of $n$. We may now state the theorem, which is an extension of Proposition~\ref{proprem}. 
\begin{thm}\label{thmexpansion}
Assume that $\{ a_n(q) \}_{n \ge 1} \subseteq \mathbb{Z}[q]$ satisfies the $q$-Gauss congruences of order $r$ with respect to $S \subseteq \mathbb{P}$, for some $r \ge 0$. Let $n \ge 2$, $m\ge 1$ be integers with $n \in \mathbb{N}_S$. Let $p$ be the smallest prime divisor of $n$. For any $0 \le i \le r-1$, define the function
\begin{equation*}
g_{i}\colon D_n \to \mathbb{C}, \quad g_{i}(d) = (\omega_n^d)^i a_{mn}^{(i)}(\omega_n^d).
\end{equation*}
Then the first $1+\min\{ p-2,r-1\}$ $[n]_q$-digits of $a_{mn}(q)$ are given recursively by
\begin{equation}\label{eq:digitsthmexp}
f_i(q) = \frac{1}{i!n^i} \left( (q-1)^{i} G_{g_{i},n}(q) - \sum_{m_1=0}^{i-1} \sum_{m_2=m_1}^{i} \binom{i}{m_2} f_{m_1}^{(i-m_2)}(q)R_{n,m_1,m_2}(q)(q-1)^{i-m_2}q^{i-m_2}\right),
\end{equation}
for $0 \le i \le \min\{p-2, r-1\}$, where $R_{n,m_1,m_2}(t)\in \mathbb{Z}[t]$ are defined in Lemma~\ref{lem:derivn}. The digit $f_i(q)$ is divisible by $q-1$ for $1 \le i \le \min\{p-2,r-1\}$. Additionally, we have
\begin{equation}\label{eq:supermu}
\sum_{d \mid n} \mu\left(\frac{n}{d}\right) a_{md}(1) \equiv 0 \bmod n^{1+\min\{p-2, r-1\}}.
\end{equation}
\end{thm}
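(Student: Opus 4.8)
The plan is to prove the formula \eqref{eq:digitsthmexp} and the divisibility of $f_i$ by $q-1$ together, by induction on $i$, carrying along the auxiliary degree bound $\deg f_i\le n-p+i$ for $0\le i\le\min\{p-2,r-1\}$; once these are in hand, the supercongruence \eqref{eq:supermu} will drop out by specializing $q=1$ and applying Möbius inversion. For the base case $i=0$ I would observe that $g_0(d)=a_{mn}(\omega_n^d)$, which by Proposition \ref{propcharweak} (applied with the set $S$, and using $\mathrm{ord}(\omega_n^d)=n/d$ for $d\mid n$) equals $a_{md}(1)$; hence Proposition \ref{proprem} reads $a_{mn}(q)\equiv G_{g_0,n}(q)\bmod [n]_q$, and since $\deg G_{g_0,n}\le n-p<n-1$ this is precisely the statement $f_0=G_{g_0,n}$, which is the $i=0$ instance of \eqref{eq:digitsthmexp}.

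For the inductive step, I would start from the exact finite identity $a_{mn}(q)=\sum_{j\ge 0}f_j(q)[n]_q^j$, differentiate it $i$ times, and evaluate at an arbitrary $\omega\in\mu_n\setminus\{1\}$. Because $[n]_q=\prod_{d\mid n,\,d>1}\Phi_d(q)$ has a simple zero at $\omega$, the power $[n]_q^j$ vanishes to order exactly $j$ there, so only the summands with $j\le i$ survive; isolating $j=i$, and using $([n]_q^i)^{(i)}(\omega)=i!\,([n]'_\omega)^i$ with $[n]'_\omega=n/(\omega(\omega-1))$ (obtained by differentiating $(q-1)[n]_q=q^n-1$), I can solve for $f_i(\omega)$ in terms of $a_{mn}^{(i)}(\omega)$, the earlier digits $f_{m_1}$ and their derivatives, and the numbers $([n]_q^{m_1})^{(m_2)}(\omega)$. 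The hypothesis enters here: since $i\le r-1$, the map $\omega\mapsto\omega^i a_{mn}^{(i)}(\omega)$ depends only on $\mathrm{ord}(\omega)$, so on $\mu_n\setminus\{1\}$ it coincides with $G_{g_i,n}$ (using the defining property $G_{g,n}(\omega)=g(n/\mathrm{ord}(\omega))$ valid when $\mathrm{ord}(\omega)>1$), giving $a_{mn}^{(i)}(\omega)=\omega^{-i}G_{g_i,n}(\omega)$. Substituting this, and invoking Lemma \ref{lem:derivn}, which supplies polynomials $R_{n,m_1,m_2}\in\mathbb{Z}[q]$ of degree at most $m_2-m_1$ with $\omega^{m_2}(\omega-1)^{m_2}([n]_q^{m_1})^{(m_2)}(\omega)=R_{n,m_1,m_2}(\omega)$ on $\mu_n\setminus\{1\}$ (in particular $R_{n,m_1,m_1}=m_1!\,n^{m_1}$ and $R_{n,0,m_2}=0$ for $m_2\ge 1$), I would find that $f_i(\omega)$ is exactly the right-hand side of \eqref{eq:digitsthmexp} evaluated at $\omega$. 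The induction hypothesis on $\deg f_0,\dots,\deg f_{i-1}$, together with $\deg G_{g_i,n}\le n-p$ and the degree bound on the $R_{n,m_1,m_2}$, shows that the right-hand side of \eqref{eq:digitsthmexp} is a polynomial of degree at most $n-p+i$, which is $<n-1$ precisely because $i\le p-2$; as $\deg f_i<n-1$ as well and the two polynomials agree at the $n-1$ points $\mu_n\setminus\{1\}$, they are equal, which proves \eqref{eq:digitsthmexp} and the bound $\deg f_i\le n-p+i$.

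Divisibility of $f_i$ by $q-1$ for $i\ge 1$ is then read off from \eqref{eq:digitsthmexp}: the term $(q-1)^i G_{g_i,n}$ and every double-sum term with $m_2<i$ carry a factor $q-1$, and the only $m_2=i$ contributions are $f_{m_1}(q)R_{n,m_1,i}(q)$ with $m_1\le i-1$, which vanish when $m_1=0$ (since $R_{n,0,i}=0$) and are divisible by $q-1$ when $m_1\ge 1$ by the inductive divisibility of $f_{m_1}$; since $q-1$ is coprime to $i!\,n^i$, this yields $q-1\mid f_i$, in particular $f_i(1)=0$. To finish, I would put $q=1$ in $a_{mn}(q)=\sum_j f_j(q)[n]_q^j$, using $[n]_1=n$ and $f_j(1)=0$ for $1\le j\le K:=\min\{p-2,r-1\}$, to get $a_{mn}(1)\equiv f_0(1)=G_{g_0,n}(1)\bmod n^{K+1}$. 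Evaluating $G_{g_0,n}(1)$ via $[n]_1/[n/d]_1=d$ gives $G_{g_0,n}(1)=\sum_{d\in D_n}\sum_{e\mid d}\mu(d/e)a_{me}(1)$; writing $h(\nu)=\sum_{e\mid\nu}\mu(\nu/e)a_{me}(1)$ and applying Möbius inversion ($\sum_{d\mid\nu}h(d)=a_{m\nu}(1)$) gives $G_{g_0,n}(1)=\sum_{d\mid n,\,d\ne n}h(d)=a_{mn}(1)-h(n)$. Therefore $\sum_{d\mid n}\mu(n/d)a_{md}(1)=h(n)=a_{mn}(1)-G_{g_0,n}(1)\equiv 0\bmod n^{K+1}$, which is \eqref{eq:supermu}.

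The step I expect to be the main obstacle is the degree bookkeeping in the inductive step: one must check that the expression on the right of \eqref{eq:digitsthmexp}, assembled from $G_{g_i,n}$, the lower digits and their derivatives, and the polynomials $R_{n,m_1,m_2}$, genuinely has degree below $n-1$, so that it is the honest $i$-th $[n]_q$-digit rather than merely congruent to it modulo $[n]_q$. This is exactly where the restriction $i\le p-2$ is indispensable, and it rests on the sharp degree bound for $R_{n,m_1,m_2}$; thus the real technical core is a clean formulation and proof of Lemma \ref{lem:derivn}, describing the derivatives of $[n]_q^{m_1}$ at the roots of $[n]_q$.
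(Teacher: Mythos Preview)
Your proposal is correct and follows essentially the same approach as the paper. The paper packages the inductive argument (degree bound, Leibniz/L'H\^{o}pital computation via Lemma~\ref{lem:derivn}, identification of the $[n]_q$-digits, and divisibility by $q-1$) as a standalone Proposition~\ref{prop:digits} for an arbitrary polynomial $f$, then applies it with $f=a_{mn}$; the only cosmetic difference is that the paper defines the $f_i$ by the recursive formula and then proves they are the genuine digits, whereas you start from the digits and derive the formula---the two are equivalent once the degree bound $\deg f_i\le n-p+i$ is in hand, and the final M\"obius step for \eqref{eq:supermu} is identical.
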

\begin{remark}
When $p<r+1$, Theorem~\ref{thmexpansion} does not give us $r$ $[n]_q$-digits of $a_{nm}(q)$, but the proof shows that in any case the polynomial
\begin{equation*}
r(q) = \sum_{i=0}^{r-1} f_i(q) [n]_q^i \in \mathbb{Q}[q]
\end{equation*}
satisfies $r^{(j)}(\omega) = a^{(j)}_{nm}(\omega)$ for all $\omega \in \mu_n \setminus \{1\}$ and $0 \le j \le r-1$.
\end{remark}
Assume that the conditions of Theorem~\ref{thmexpansion} hold and consider the base-$[n]_q$ expansion
\begin{equation}\label{eq:amnsuperexp}
a_{nm}(q) \equiv \sum_{i=0}^{\min\{p-2,r-1\}} f_i(q) [n]_q^i \bmod [n]_q^{1+\min\{p-2,r-1\}}.
\end{equation}
We regard \eqref{eq:amnsuperexp} as a $q$-analogue of \eqref{eq:supermu}. Indeed, \eqref{eq:supermu} follows quickly by plugging $q=1$ in \eqref{eq:amnsuperexp}, see the proof of Theorem~\ref{thmexpansion} in \S\ref{proofexpansion}. 

To deduce Corollary~\ref{cor:superexamples} from Theorem~\ref{thm:superqexamples}, we only need the case $S=\mathbb{P}$, $r=3$ of Theorem~\ref{thmexpansion}, which is given by the following corollary. In order to compute $f_0$, $f_1$ and $f_2$ using the recursion \eqref{eq:digitsthmexp}, we need the following values of $R_{n,m_1,m_2}$: $R_{n,0,m_2}=\delta_{m_2, 0}$ (Kronecker delta), $R_{n,1,1}=n$, $R_{n,1,2}(t) = n(n-1)(t-1)-2nt$. 
\begin{cor}\label{corexpansion}
Assume that $\{ a_n(q) \}_{n \ge 1}$ satisfies the $q$-Gauss congruences of order $3$. Then for all $m \ge 1$, $n\ge 2$ with $(n,6)=1$ we have 
\begin{equation*}
a_{nm}(q) \equiv f_0(q) + f_1(q) [n]_q + f_2(q)[n]_q^2 \bmod [n]_q^3,
\end{equation*}
where $f_0,f_1,f_2 \in \mathbb{Z}[q]$ are the first three $[n]_q$-digits of $a_{nm}(q)$, given by
\begin{equation*}
\begin{split}
f_0(q) &= G_{g_0,n}(q), \qquad f_1(q) = \frac{q-1}{n} \left(G_{g_1,n}(q) - qG'_{g_0,n}(q) \right),\\
f_2(q) &= \frac{(q-1)^2}{2n^2} \left( G_{g_2,n}(q) +G''_{g_0,n}(q)q^2+(qG'_{g_0,n}(q)-G_{g_1,n}(q))(n-1)+2q(G'_{g_0,n}(q)-G'_{g_1,n}) \right),
\end{split}
\end{equation*}
and $g_0,g_1,g_2 \colon D_n \to \mathbb{C}$ are given by
\begin{equation*}
g_0(d)=a_{md}(1), \quad  g_1(d)=a'_{mn}(\omega_n^d)\omega_n^d, \quad g_2(d)=a''_{mn}(\omega_n^d)\omega_n^{2d}.
\end{equation*}
\end{cor}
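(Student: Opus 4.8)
The plan is to read off Corollary \ref{corexpansion} from the special case $S=\mathbb{P}$, $r=3$ of Theorem \ref{thmexpansion}. First I would check that the hypothesis $(n,6)=1$ puts us in the range where the theorem yields all three digits: if $n$ is coprime to $6$ then its smallest prime divisor $p$ satisfies $p\ge 5$, so $\min\{p-2,r-1\}=\min\{p-2,2\}=2$, and Theorem \ref{thmexpansion} computes the first $1+2=3$ of the $[n]_q$-digits $f_0,f_1,f_2$ of $a_{nm}(q)$, together with the congruence $a_{nm}(q)\equiv f_0(q)+f_1(q)[n]_q+f_2(q)[n]_q^2\bmod [n]_q^3$ from \eqref{eq:amnsuperexp}. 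Since $a_{nm}(q)\in\mathbb{Z}[q]$ and $[n]_q$ is monic, these digits automatically lie in $\mathbb{Z}[q]$, which is the integrality assertion. I would also record that $g_0$ from the theorem, $g_0(d)=a_{mn}(\omega_n^d)$, equals $d\mapsto a_{md}(1)$: for $d\mid n$ we have $\mathrm{ord}(\omega_n^d)=n/d$, so by Corollary \ref{cor:sp} (criterion \eqref{eq:Critanomega}) $a_{mn}(\omega_n^d)=a_{md}(1)$; the functions $g_1,g_2$ of the corollary are literally the $i=1,2$ cases of $g_i$, so nothing needs reconciling there.

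It then remains to unwind the recursion \eqref{eq:digitsthmexp} for $i=0,1,2$, inserting the stated values $R_{n,0,m_2}=1_{m_2=0}$, $R_{n,1,1}=n$, $R_{n,1,2}(t)=n(n-1)(t-1)-2nt$. For $i=0$ the double sum is empty, giving $f_0(q)=G_{g_0,n}(q)$. For $i=1$ the double sum ranges over $m_1=0$, $m_2\in\{0,1\}$; the $m_2=1$ term carries the factor $R_{n,0,1}=0$ and drops out, while $m_2=0$ contributes $f_0'(q)(q-1)q$, so $f_1(q)=\frac{q-1}{n}\bigl(G_{g_1,n}(q)-qG_{g_0,n}'(q)\bigr)$ after substituting $f_0=G_{g_0,n}$.

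The only genuine computation is $i=2$. The double sum now runs over $m_1\in\{0,1\}$: for $m_1=0$ the $m_2=1,2$ terms vanish since $R_{n,0,1}=R_{n,0,2}=0$, leaving $f_0''(q)(q-1)^2q^2$; for $m_1=1$ one gets $2nf_1'(q)(q-1)q$ and $f_1(q)\bigl(n(n-1)(q-1)-2nq\bigr)$. Substituting $f_1=(q-1)\bigl(G_{g_1,n}-qG_{g_0,n}'\bigr)/n$ (and its derivative) into
\[
f_2(q)=\frac{1}{2n^2}\Bigl((q-1)^2G_{g_2,n}(q)-f_0''(q)(q-1)^2q^2-2nf_1'(q)(q-1)q-f_1(q)\bigl(n(n-1)(q-1)-2nq\bigr)\Bigr),
\]
the two terms proportional to $q(q-1)\bigl(G_{g_1,n}-qG_{g_0,n}'\bigr)$ cancel; dividing the rest by $(q-1)^2$ and regrouping produces the claimed closed form for $f_2$. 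The main — and essentially only — obstacle is the bookkeeping here: keeping track of which terms of \eqref{eq:digitsthmexp} survive and verifying the cancellation, since everything conceptual is already contained in Theorem \ref{thmexpansion}.
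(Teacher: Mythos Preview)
Your proposal is correct and follows exactly the approach indicated in the paper: specialize Theorem \ref{thmexpansion} to $S=\mathbb{P}$, $r=3$, use $(n,6)=1$ to ensure $p\ge 5$ so that three digits are produced, and unwind the recursion \eqref{eq:digitsthmexp} with the stated values of $R_{n,m_1,m_2}$. Your bookkeeping for $i=2$ (including the cancellation of the $q(q-1)(G_{g_1,n}-qG'_{g_0,n})$ terms and the identification $g_0(d)=a_{md}(1)$ via criterion \eqref{eq:Critanomega}) is accurate and matches the claimed formula.
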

\subsection{$q$-supercongruences}
Below we use that $G_{g,p^k}(1)=g(p^{k-1})$ when $p$ is a prime and $k \ge 1$, and in particular $G_{g,p}(q)=g(1)$.
\begin{thm}\label{thm:superqbinom}
Let $a_n(q) = {an \brack bn}_q$. For all $n\ge 1$ and $\omega \in \mu_n$, we have 
\begin{align}
\label{eq:an1b} a_n(\omega)&= a_{\frac{n}{\mathrm{ord}(\omega)}}(1),\\
\label{eq:an2b} \omega a'_n(\omega)&= \mathrm{ord}(\omega)^2 \binom{\frac{an}{\mathrm{ord}(\omega)}}{\frac{bn}{\mathrm{ord}(\omega)}} \frac{b(a-b)n^2}{2},\\
\label{eq:an3b}\omega^2 a''_n(\omega)&=
 \binom{\frac{an}{\mathrm{ord}(\omega)}}{\frac{bn}{\mathrm{ord}(\omega)}} b(a-b)n^2  \left(\frac{b(a-b)n^2 }{4} + \frac{an\cdot\mathrm{ord}(\omega)-5}{12}\right),
\end{align}
In particular, $a_n(q)$ satisfies the $q$-Gauss congruences of order $3$. Thus, in the notation of Corollary~\ref{corexpansion}, for all $m \ge 1$, $n\ge 2$ with $(n,6)=1$ we have
\begin{equation}\label{eq:qbinomexp}
a_{nm}(q) \equiv f_0(q) +  f_1(q) [n]_q + f_2(q) [n]_q^2 \bmod [n]_q^3.
\end{equation}
\end{thm}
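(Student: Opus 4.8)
The plan is to analyze $a_n(q)={an \brack bn}_q$ near each root of unity $\omega\in\mu_n$ by the substitution $q=\omega v$. Write $d=\mathrm{ord}(\omega)$, so $d\mid n$, and put $x=v-1$; since the Taylor expansion of the polynomial $a_n$ at $\omega$ reads $a_n(\omega v)=\sum_{j\ge 0}\frac{\omega^j a_n^{(j)}(\omega)}{j!}x^j$, we have $\omega^j a_n^{(j)}(\omega)=j!\,[x^j]a_n(\omega v)$, so it suffices to expand $a_n(\omega v)$ through order $x^2$. Starting from $a_n(q)=\prod_{i=1}^{an}(1-q^i)\big/\bigl(\prod_{i=1}^{bn}(1-q^i)\prod_{i=1}^{(a-b)n}(1-q^i)\bigr)$ and using $\omega^d=1$, I split each product according to whether $d\mid i$. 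The factors with $d\mid i$ become $1-v^{d\ell}$ and reassemble, after the usual cancellation, into $B(v^d)$ with $B(w)={an/d \brack bn/d}_w$; the remaining factors, each of the form $1-\omega^i v^i$ with $1-\omega^i\ne 0$, multiply to a rational function $A(v)$ regular and nonzero at $v=1$. Thus $a_n(\omega v)=A(v)B(v^d)$ identically in $v$. Evaluating at $v=1$: grouping the indices of $A$ into residue classes modulo $d$, each length-$d$ block contributes the same factor $\prod_{\rho=1}^{d-1}\frac{\omega^\rho-1}{\omega-1}$ to both numerator and denominator, and there are equally many blocks ($an/d$) above and below, so $A(1)=1$; since $B(1)=\binom{an/d}{bn/d}$, this recovers \eqref{eq:an1b}.

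For the derivatives I would expand $\log a_n(\omega v)=\log A(v)+\log B(v^d)$ through order $x^2$. Two local expansions feed in. The first is a local expansion of the $q$-binomial at $q=1$: from the product form together with $1-q^{j}=-j(q-1)\bigl(1+\tfrac{j-1}{2}(q-1)+\tfrac{(j-1)(j-2)}{6}(q-1)^2+\cdots\bigr)$ one gets, for all $m\ge k\ge 0$,
\[
\log {m \brack k}_q=\log\binom mk+\tfrac{k(m-k)}{2}(q-1)+\tfrac{k(m-k)(m-5)}{24}(q-1)^2+O\bigl((q-1)^3\bigr),
\]
which controls $\log B(v^d)$ after substituting $q-1\mapsto v^d-1=dx+\binom d2 x^2+O(x^3)$. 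The second is, for $d\nmid i$ and $\rho=i\bmod d$, $\log(1-\omega^i v^i)=\log(1-\omega^i)-i r_\rho x-\bigl(\tfrac{i(i-1)}{2}r_\rho+\tfrac12 i^2 r_\rho^2\bigr)x^2+O(x^3)$, where $r_\rho=\frac{\omega^\rho}{1-\omega^\rho}$; note $\tfrac{i(i-1)}{2}r_\rho+\tfrac12 i^2 r_\rho^2=\tfrac{i^2}{2}s_\rho-\tfrac i2 r_\rho$ with $s_\rho=\frac{\omega^\rho}{(1-\omega^\rho)^2}$.

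Summing the second expansion over the three index ranges of $A$, with signs $+,-,-$, and reindexing $i=\ell d+\rho$, the $\rho$- and $\rho^2$-weighted sums over $\rho$ that arise all occur multiplied by a coefficient proportional to $-a+b+(a-b)=0$ and hence drop out, with the sole exception of $\sum_{\rho=1}^{d-1}\rho\, s_\rho$, which equals $\tfrac d2\sum_{\rho=1}^{d-1}s_\rho$ by the symmetry $s_\rho=s_{d-\rho}$ (a consequence of $1-\omega^{-\rho}=-\omega^{-\rho}(1-\omega^\rho)$). So the whole calculation reduces to the two constants $S_0=\sum_{\rho=1}^{d-1}r_\rho$ and $U_0=\sum_{\rho=1}^{d-1}s_\rho$; taking the first and second logarithmic derivatives of $1+z+\cdots+z^{d-1}=\prod_{\rho=1}^{d-1}(z-\omega^\rho)$ at $z=1$ gives $\sum_{\rho=1}^{d-1}\frac{1}{1-\omega^\rho}=\tfrac{d-1}{2}$ and $\sum_{\rho=1}^{d-1}\frac{1}{(1-\omega^\rho)^2}=-\tfrac{(d-1)(d-5)}{12}$, hence $S_0=-\tfrac{d-1}{2}$ and $U_0=-\tfrac{d^2-1}{12}$. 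Combining the two expansions, the coefficient of $x$ in $\log a_n(\omega v)$ comes out $d$-independent, and the coefficient of $x^2$ simplifies — once the $\pm\tfrac{b(a-b)n^2(d-1)}{4d}$ contributions of $\log A$ and $\log B(v^d)$ cancel and one uses $\tfrac{an(d^2-1)}{d}+\tfrac{an}{d}=an\cdot\mathrm{ord}(\omega)$ — to a clean expression. Exponentiating, $a_n(\omega v)=\binom{an/d}{bn/d}\bigl(1+c_1 x+(c_2+\tfrac12 c_1^2)x^2+O(x^3)\bigr)$, and reading off $[x^1]$ and $[x^2]$ yields \eqref{eq:an2b} and \eqref{eq:an3b}. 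Finally, \eqref{eq:qbinomexp} is immediate: the right-hand sides of \eqref{eq:an1b}--\eqref{eq:an3b} depend only on $n$ and $\mathrm{ord}(\omega)$, so for $j=1,2$ the function $\omega\mapsto \omega^j a_n^{(j)}(\omega)$ depends only on $\mathrm{ord}(\omega)$; together with the fact that $\{{an \brack bn}_q\}_{n\ge 1}$ satisfies the $q$-Gauss congruences (criterion \eqref{eq:Critanomega}), this is precisely the statement that it satisfies the $q$-Gauss congruences of order $3$ in the sense of Definition \ref{def:deforder}, so Corollary \ref{corexpansion} applies and gives \eqref{eq:qbinomexp} for all $m\ge 1$ and $n\ge 2$ with $(n,6)=1$.

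The main obstacle is the bookkeeping behind the $x^2$-coefficient: one must collect the second-order term of $\log B$, the contributions flowing through $(v^d-1)^2$ and through the $\binom d2 x^2$-part of $v^d-1$ inside the first-order term of $\log B$, and the $\tfrac{i(i-1)}{2}r_\rho$ and $\tfrac12 i^2 r_\rho^2$ pieces of $\log A$ summed over three ranges and reorganized by residue class, and then verify that all dependence on $\mathrm{ord}(\omega)$ collapses as claimed. The conceptual ingredients — the factorization $a_n(\omega v)=A(v)B(v^d)$, the vanishing of the $\rho$-weighted sums via $-a+b+(a-b)=0$, the symmetry $s_\rho=s_{d-\rho}$, and the evaluations $S_0=-\tfrac{d-1}{2}$ and $U_0=-\tfrac{d^2-1}{12}$ — are each short; it is the assembly that requires care.
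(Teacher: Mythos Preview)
Your approach is correct and genuinely different from the paper's. The paper derives \eqref{eq:an1b}--\eqref{eq:an3b} by invoking Corollary~\ref{cor:binomder}, which in turn is obtained (Propositions~\ref{propfirstderiv}--\ref{propbinom1stderiv}) by differentiating the $q$-binomial theorem $\prod_{i=0}^{n-1}(1+tq^i)=\sum_k {n\brack k}_q t^k q^{\binom{k}{2}}$ once and twice in $q$, substituting $q=\omega$, and extracting the coefficient of $t^k$; this yields closed forms for $\omega^j{n\brack k}_\omega^{(j)}$ valid for \emph{all} $k$, which are then specialized to $(n,k)\mapsto(an,bn)$. You instead work directly with the product formula for ${an\brack bn}_q$, make the substitution $q=\omega v$, split off the factors with $d\mid i$ to obtain the clean factorization $a_n(\omega v)=A(v)\,B(v^d)$ with $B(w)={an/d\brack bn/d}_w$, and then expand $\log A+\log B(v^d)$ to second order in $x=v-1$. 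Both routes end up needing exactly the same two constants $\sum_{\rho=1}^{d-1}\frac{1}{1-\omega^\rho}$ and $\sum_{\rho=1}^{d-1}\frac{1}{(1-\omega^\rho)^2}$ (your $S_0,U_0$; the paper's Lemma~\ref{lem:rootsunitysum}).

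What each buys: the paper's approach produces the general-purpose Corollary~\ref{cor:binomder}, which it then reuses as a black box in the Ap\'ery and Almkvist--Zudilin proofs (Theorems~\ref{thm:aprey}, \ref{thm:zeta}); the theorem at hand becomes a one-line specialization. Your approach is more self-contained for this particular sequence and makes the mechanism transparent---the factorization $A(v)B(v^d)$ explains immediately why the answer involves $\binom{an/d}{bn/d}$, and the vanishing $-a+b+(a-b)=0$ together with the symmetry $s_\rho=s_{d-\rho}$ explains why the potentially awkward weighted sums $\sum_\rho\rho r_\rho$, $\sum_\rho\rho^2 s_\rho$ never actually appear. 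Your method would not, however, directly give the values of ${n\brack k}'_\omega$ for $k$ with $\mathrm{ord}(\omega)\nmid k$ (the second half of Corollary~\ref{cor:binomder}), which the paper needs elsewhere. Note also that the logarithmic expansion is legitimate precisely because $a_n(\omega)=\binom{an/d}{bn/d}>0$, so no vanishing issue arises.
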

Specializing \eqref{eq:qbinomexp} to $n=p^k$ ($p\ge 5$ a prime, $k \ge 1$), $m=1$ and $q=1$, we obtain \eqref{eq:conseqbinomq} (since $f_0(1)=a_{p^{k-1}}(1), f_1(1)=f_2(1)=0$). Theorem~\ref{thm:superqbinom} is the first result providing a $q$-analogue of 
\eqref{eq:conseqbinomq} for $k>1$. Previously, various $q$-analogues were found only for the case $k=1$. Clark \cite{clark1995} has shown that
\begin{equation*}
{an \brack bn}_q \equiv {a \brack b}_{q^{n^2}} \bmod \Phi^2_n(q),
\end{equation*}
where $\Phi_n(q)$ is the $n$-th cyclotomic polynomial, which coincides with $[n]_q$ for $n$ a prime. Andrews \cite[Thm.~3]{andrews1999} has shown that if $p$ is an odd prime, then
\begin{equation*}
{ap \brack bp}_q \equiv q^{(a-b)b\binom{p}{2}} {a \brack b}_{q^p} \bmod [p]^2_q.	
\end{equation*}
Straub \cite[Thm.~1]{straub2011}, building on a work of Shi and Pan \cite{shi2007}, proved that for any prime $p \ge 5$,
\begin{equation}\label{straubres}
{ap \brack bp}_q \equiv {a \brack b}_{q^{p^2}} - \binom{a}{b}b(a-b)\frac{p^2-1}{24}(q^p-1)^2 \bmod {[p]^3_q},
\end{equation}
which refines Clark's result for $n=p$. Cai and Garc\'{i}a-Pulgar\'{i}n \cite{cai2001} obtained some variants of \eqref{straubres} when $a=2$, $b=1$. When $m=1$ and $n=p \ge 5$ is a prime, \eqref{eq:qbinomexp} simplifies to
\begin{equation}\label{eq:qbinomk1}
{ap \brack bp}_q \equiv \binom{a}{b} + \frac{\binom{a}{b}b(a-b)p}{2}(q^p-1) +  \frac{\binom{a}{b}b(a-b)}{2}\left( \frac{b(a-b)}{4}p^2 + \frac{ap^2-5}{12}-\frac{p-1}{2} \right)(q^p-1)^2 \bmod [p]_q^3.
\end{equation}
Recently, Straub \cite[Thm.~2.2]{straub2019} extended \eqref{straubres} as follows:
\begin{equation}\label{straubres2}
{am \brack bm}_q \equiv {a \brack b}_{q^{m^2}} - \binom{a}{b}b(a-b)\frac{m^2-1}{24}(q^m-1)^2 \bmod {\Phi_m(q)^3}
\end{equation}
for all $m\ge 1$ with $(m,6)=1$. As $\Phi_{p^k}(q)=p$ when $p$ is a prime and $k \ge 1$, substituting $m=p^k$ in \eqref{straubres2} gives the congruence $\binom{ap^k}{bp^k} \equiv \binom{a}{b} \bmod p^3$. Although \eqref{eq:qbinomk1} does not imply Straub's results, in \S\ref{sec:altern} we explain how to derive \eqref{straubres2} quickly using our methods. Recently Zudilin computed ${am \brack bm}_q$ modulo a fourth power of $\Phi_m$ \cite[Thms.~1,2]{zudilin2019}, but we shall not pursue such higher congruences here.

The following theorem is proved in \S\ref{sec:superqapery}.
\begin{thm}\label{thm:aprey}
Let $a_n(q)$ be the sequence $g_n(q)$ defined in Theorem~\ref{thm:superqexamples}. For any $P \in \mathbb{C}[x,y]$, let
\begin{equation*}
a_{n,P} = \sum_{k=0}^{n}  \binom{n}{k}^2 \binom{n+k}{n}^2 P(n,k).
\end{equation*}
For all $n \ge 1$ and $\omega \in \mu_n$, we have
\begin{align}
\label{eq:an1} a_n(\omega)&= a_{\frac{n}{\mathrm{ord}(\omega)}}(1),\\
\label{eq:an2}\omega a'_n(\omega)&= \mathrm{ord}(\omega)^2 a_{\frac{n}{\mathrm{ord}(\omega)},2xy-y^2+f(x,y)},\\
\label{eq:an3}\omega^2 a''_n(\omega)&= \mathrm{ord}(\omega)^4 a_{\frac{n}{\mathrm{ord}(\omega)}, (2xy-y^2+f(x,y))^2+\frac{x^2y}{3}-\frac{(x-y)^2}{6}}+ \mathrm{ord}(\omega)^2 a_{\frac{n}{\mathrm{ord}(\omega)},\frac{x^2}{6}-2xy+y^2-f(x,y)}.
\end{align}
In particular, $a_n(q)$ satisfies the $q$-Gauss congruences of order $3$. Thus, in the notation of Corollary~\ref{corexpansion}, for all $m \ge 1$, $n\ge 2$ with $(n,6)=1$ we have
\begin{equation}\label{eq:qaperyexp}
a_{nm}(q) \equiv f_0(q) +  f_1(q) [n]_q + f_2(q) [n]_q^2 \bmod [n]_q^3.
\end{equation}
\end{thm}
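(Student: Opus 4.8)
The plan is to establish the three pointwise formulas \eqref{eq:an1}--\eqref{eq:an3} first, and then deduce \eqref{eq:qaperyexp} as a formal consequence. Fix $\omega \in \mu_n$, put $d = \mathrm{ord}(\omega)$ and $N = n/d$, and split the defining sum as $g_n(q) = \sum_{k=0}^n T_k(q)$ with $T_k(q) = {n \brack k}_q^2 {n+k \brack k}_q^2 q^{f(n,k)}$, according to whether $d \mid k$. Since $d \mid n$, the $q$-Lucas theorem gives ${n \brack k}_\omega = 0$ whenever $d \nmid k$, and, writing $k = dj + s$ with $0 < s < d$, the factorization
\begin{equation*}
{n \brack k}_q = {n \brack dj}_q\,[(N-j)d]_q\,\frac{\prod_{i=1}^{s-1}[(N-j)d - i]_q}{\prod_{i=1}^{s}[dj+i]_q}
\end{equation*}
shows that among these factors only $[(N-j)d]_q$ vanishes at $\omega$, so ${n\brack k}_q$ has a \emph{simple} zero there. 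Hence for $d\nmid k$ the term $T_k$ vanishes to order exactly two at $\omega$; such terms contribute nothing to $g_n(\omega)$ or to $\omega g_n'(\omega)$, and contribute $\omega^2 T_k''(\omega)$ to $\omega^2 g_n''(\omega)$. Using $[dj+i]_\omega=[i]_\omega$, $[(N-j)d-i]_\omega=-[i]_\omega/\omega^i$, $\frac{d}{dq}[(N-j)d]_q|_\omega=(N-j)d/(\omega(\omega-1))$, and $f(n,dj+s)\equiv s^2\bmod d$, I would reduce $\omega^2 T_k''(\omega)$ to the clean value $2\binom{N}{j}^2\binom{N+j}{j}^2(N-j)^2 d^2\,\omega^s/(\omega^s-1)^2$.

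For the terms with $d\mid k$, say $k=dj$, one has $f(n,dj)=d^2 f(N,j)$ and, by $q$-Lucas, ${n\brack dj}_\omega=\binom{N}{j}$, ${n+dj\brack dj}_\omega=\binom{N+j}{j}$; summing gives $g_n(\omega)=\sum_j\binom{N}{j}^2\binom{N+j}{j}^2=g_N(1)$, which is \eqref{eq:an1} (and already shows, via Corollary \ref{cor:sp}, that $\{g_n(q)\}$ satisfies the $q$-Gauss congruences). For the derivatives I would pass to the Euler operator $\theta=q\,d/dq$, so that $\omega g_n'(\omega)=(\theta g_n)(\omega)$ and $\omega^2 g_n''(\omega)=(\theta^2 g_n-\theta g_n)(\omega)$, and write $T_{dj}=u^2v^2w$ with $u={n\brack dj}_q$, $v={n+dj\brack dj}_q$, $w=q^{d^2 f(N,j)}$, all nonvanishing at $\omega$. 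Then $(\theta T_{dj})(\omega)=T_{dj}(\omega)(\theta\log T_{dj})(\omega)$ and $(\theta^2 T_{dj})(\omega)=T_{dj}(\omega)\big[(\theta\log T_{dj})^2+\theta^2\log T_{dj}\big](\omega)$, so everything reduces to evaluating $\theta\log{dM\brack dj}_q$ and $\theta^2\log{dM\brack dj}_q$ at $\omega$. For this I would group the factors $[m]_q$ of the $q$-binomial by the residue of their index modulo $d$: the indices divisible by $d$ produce terms with a pole (of the form $1/\epsilon$ for $\theta\log$, $1/\epsilon^2$ for $\theta^2\log$, with $q=\omega e^{\epsilon}$) that cancels between the numerator and denominator contributions, the finite remainder being governed by the expansion $\frac{x^2 e^x}{(e^x-1)^2}=\frac{x^2}{4\sinh^2(x/2)}=1-\frac{x^2}{12}+\cdots$; the indices prime to $d$ produce sums of shape $\sum_{\zeta^d=1,\zeta\ne1}\zeta/(\zeta-1)$, $\sum\zeta/(\zeta-1)^2$ and $\sum_{t}t\,\omega^t/(\omega^t-1)^2$, which I would evaluate in closed form ($-\tfrac{d-1}{2}$, $-\tfrac{d^2-1}{12}$, $-\tfrac{d(d^2-1)}{24}$) from the logarithmic derivatives of $1+q+\cdots+q^{d-1}$ at $q=1$ together with the $t\mapsto d-t$ symmetry. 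This yields $(\theta\log{dM\brack dj}_q)(\omega)=\tfrac{d^2}{2}j(M-j)$ and $(\theta^2\log{dM\brack dj}_q)(\omega)=\tfrac{d^2}{12}j(M-j)(d^2M+1)$.

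Assembling the pieces, the $d\mid k$ terms contribute $\sum_j\binom{N}{j}^2\binom{N+j}{j}^2 P(N,j)=a_{N,P}$ for explicit polynomials $P$, while the $d\nmid k$ terms (which affect only the second derivative) contribute $-\tfrac{d^2(d^2-1)}{6}a_{N,(x-y)^2}$, obtained by summing the simple-zero values over $s$ via $\sum_{\zeta^d=1,\zeta\ne1}\zeta/(\zeta-1)^2=-\tfrac{d^2-1}{12}$ and then over $j$. Collecting powers of $d=\mathrm{ord}(\omega)$ then gives exactly \eqref{eq:an2} and \eqref{eq:an3}. Finally, \eqref{eq:an2}--\eqref{eq:an3}, applied with $n$ replaced by $nm$ and with $\omega=\omega_n^d$ (of order $n/d$), say precisely that $\omega\mapsto\omega^j g_{nm}^{(j)}(\omega)$ depends only on $\mathrm{ord}(\omega)$ for $j=1,2$, i.e.\ $\{g_n(q)\}$ satisfies the $q$-Gauss congruences of order $3$; then \eqref{eq:qaperyexp} is a direct instance of Corollary \ref{corexpansion}, with $g_0(d)=g_{md}(1)$, $g_1(d)=(n/d)^2 a_{md,\,2xy-y^2+f(x,y)}$, and $g_2(d)$ the order-$4$/order-$2$ combination read off from \eqref{eq:an3}.

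I expect the main obstacle to be the second-derivative bookkeeping. One must correctly combine (i) the order-two behaviour at $\omega$ of $u$, $v$ and $w$ (including the cross term $2\,\theta\log u\cdot\theta\log v$ hidden inside $(\theta\log T_{dj})^2$), (ii) the order-two Taylor coefficient of the simple zero of ${n\brack k}_q$ for $d\nmid k$, and (iii) the pole cancellations and the three root-of-unity sums, and then check that they conspire to collapse into the stated polynomials $(2xy-y^2+f)^2+\tfrac{x^2y}{3}-\tfrac{(x-y)^2}{6}$ and $\tfrac{x^2}{6}-2xy+y^2-f$ after regrouping by powers of $\mathrm{ord}(\omega)$ — a computation that is routine in principle but where a misplaced factor of $d$ or a sign error is easy to commit.
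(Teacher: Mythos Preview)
Your proposal is correct and follows the same architecture as the paper: split the sum over $k$ according to whether $d=\mathrm{ord}(\omega)$ divides $k$; observe that ${n\brack k}_q$ has a simple zero at $\omega$ when $d\nmid k$, so those terms affect only the second derivative; and evaluate the resulting root-of-unity sum $\sum_{s=1}^{d-1}\omega^s/(\omega^s-1)^2=-\tfrac{d^2-1}{12}$. The deduction of \eqref{eq:qaperyexp} from Corollary~\ref{corexpansion} is also handled exactly as in the paper.

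Where you diverge is in how you obtain the values of ${n\brack k}_q$, $\omega{n\brack k}'_q$, $\omega^2{n\brack k}''_q$ at $\omega$ for $d\mid k$. The paper does not compute these via logarithmic derivatives of the product form; instead it differentiates the $q$-binomial theorem $\prod_{j=0}^{n-1}(1+tq^j)=\sum_k{n\brack k}_q t^k q^{\binom{k}{2}}$ once and twice, substitutes $q=\omega$, and reads off the values by comparing coefficients of $t^k$ (Propositions~\ref{propfirstderiv} and~\ref{propbinom1stderiv}, packaged as Corollary~\ref{cor:binomder}). Your Euler-operator/log-derivative route gives the same numbers --- your formulas $(\theta\log{dM\brack dj}_q)(\omega)=\tfrac{d^2}{2}j(M-j)$ and $(\theta^2\log{dM\brack dj}_q)(\omega)=\tfrac{d^2}{12}j(M-j)(d^2M+1)$ agree with Corollary~\ref{cor:binomder} --- and is arguably more self-contained for this single theorem, while the paper's generating-function computation has the advantage of being done once and reused verbatim in Theorems~\ref{thm:superqbinom} and~\ref{thm:zeta}. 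One minor slip: your stated value $-\tfrac{d-1}{2}$ for $\sum_{\zeta^d=1,\zeta\ne1}\zeta/(\zeta-1)$ should be $+\tfrac{d-1}{2}$ (it is $\sum 1/(\zeta-1)$ that equals $-\tfrac{d-1}{2}$, cf.\ Lemma~\ref{lem:rootsunitysum}); this does not affect the outcome, since only the second sum enters the final $(x-y)^2$-contribution.
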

Specializing \eqref{eq:qaperyexp} to $n=p^k$ ($p\ge 5$ a prime, $k \ge 1$) and $q=1$, we obtain \eqref{eq:conseqaperyq} (since $f_0(1)=a_{p^{k-1}m}(1), f_1(1)=f_2(1)=0$). The sequence $a_n(q)$ was studied by Krattenthaler, Rivoal and Zudilin \cite{krattenthaler2006} and Zheng \cite{zheng2011} in the case $f(x,y)=(x-y)^2$ and recently by Straub \cite{straub2019} for general $f$. A different $q$-analogue of the Ap{\'e}ry numbers was considered by Adamczewski, Bell, Delaygue and Jouhet \cite[Prop.~1.5]{adamczewski2017}. 

Theorem~\ref{thm:aprey} is the first result providing a $q$-analogue of 
\eqref{eq:conseqaperyq} for $k>1$. 
Straub \cite[Cor.~1.1]{straub2019} proved that for any $m \ge 1$ with $(m,6) =1$, we have
\begin{equation}\label{eq:aperyqelegant}
a_{nm}(q) \equiv a_{n}(q^{m^2}) - (q^m-1)^2 \frac{m^2-1}{12} n^2 a_{n}(1)  \bmod \Phi_m(q)^3.
\end{equation}
As $\Phi_{p^k}(q)=p$ when $p$ is a prime and $k \ge 1$, substituting $m=p^k$ in \eqref{eq:aperyqelegant} gives the congruence $a_{np^k}(1) \equiv a_n(1) \bmod p^3$.  
When $n=p \ge 5$ is a prime, \eqref{eq:qaperyexp} simplifies to
\begin{equation}\label{eq:aperyprimeq}
a_{mp}(q) \equiv a_m(1) + b_m (q^p-1)+c_m(q^p-1)^2  \bmod [p]_q^3
\end{equation}
for
\begin{equation*}
\begin{split}
b_m &= p a_{m,2xy-y^2+f(x,y)},\\
c_m &= \frac{1}{2}(p^2 a_{m,(2xy-y^2+f(x,y))^2+\frac{x^2y}{3}-\frac{(x-y)^2}{6} }+a_{m,\frac{x^2}{6}-2xy+y^2-f(x,y)}-(p-1)a_{m,2xy-y^2+f(x,y)}).
\end{split}
\end{equation*}
Although \eqref{eq:aperyprimeq} does not imply \eqref{eq:aperyqelegant}, in \S\ref{sec:altern} we explain how to derive \eqref{eq:aperyqelegant} using our methods.
\begin{remark}
Straub also allowed $f(x,y)$, in the definition of $a_n(q)$, to assume negative values, by working in the ring $\mathbb{Z}[q,q^{-1}]$ of Laurent polynomials.
\end{remark}

The following theorem is proved in \S\ref{sec:superqzeta}.
\begin{thm}\label{thm:zeta}
Let $a_n(q)$ be the sequence $h_n(q)$ defined in Theorem~\ref{thm:superqexamples}. For any $P \in \mathbb{C}[x,y,z]$, let
\begin{equation*}
a_{n,P} = \sum_{k,\ell} \binom{n}{k}^2 \binom{n}{\ell} \binom{k}{\ell} \binom{k+\ell}{n} P(n,k,\ell).
\end{equation*}
For all $n \ge 1$ and $\omega \in \mu_n$, we have 
\begin{align}
\label{eq:an1z} a_n(\omega)&= a_{\frac{n}{\mathrm{ord}(\omega)}}(1),\\
\label{eq:an2z} \omega a'_n(\omega)&= \mathrm{ord}(\omega)^2 a_{\frac{n}{\mathrm{ord}(\omega)},xz-y^2-z^2+\frac{3xy+yz-x^2}{2}+f(x,y,z)},\\
\label{eq:an3z}\omega^2 a''_n(\omega)&=
\mathrm{ord}(\omega)^4 a_{\frac{n}{\mathrm{ord}(\omega)}, Q_1}+ \mathrm{ord}(\omega)^2 a_{\frac{n}{\mathrm{ord}(\omega)},Q_2},
\end{align}
where
\begin{equation*}
\begin{split}
Q_1 & =  \left(xz-y^2-z^2+ \frac{3xy+yz-x^2}{2}+f(x,y,z)\right)^2+\frac{x^2y - xy^2 + 2xyz + y^2z - yz^2}{12}\\
& \qquad +\frac{-x^2+xy-y^2-z^2+zy+zx}{6}, \\
Q_2 &=
\frac{1}{12} (7x^2-17xy+12y^2-12xz-7zy+12z^2)-f(x,y,z).
\end{split}
\end{equation*}
In particular, $a_n(q)$ satisfies the $q$-Gauss congruences of order $3$. Thus, in the notation of Corollary~\ref{corexpansion}, for all $m \ge 1$, $n\ge 2$ with $(n,6)=1$ we have
\begin{equation}\label{eq:qzetaexp}
a_{nm}(q) \equiv f_0(q) +  f_1(q) [n]_q + f_2(q) [n]_q^2 \bmod [n]_q^3.
\end{equation}
\end{thm}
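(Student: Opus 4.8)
The plan is to follow the same template that (presumably) proves Theorems \ref{thm:superqbinom} and \ref{thm:aprey}: reduce everything to computing, for a fixed $\omega \in \mu_n$ of order $e = \mathrm{ord}(\omega)$, the values $h_n(\omega)$, $\omega h_n'(\omega)$ and $\omega^2 h_n''(\omega)$, and then invoke Corollary \ref{corexpansion}. The key structural input is the factorization behavior of $q$-binomial coefficients at roots of unity: writing $n = e s$, we have ${n \brack k}_\omega = {s \brack k/e}_1 \cdot (\text{something})$ when $e \mid k$ and ${n \brack k}_\omega = 0$ otherwise — more precisely, the $q$-Lucas theorem gives ${es \brack k}_\omega = \binom{s}{\lfloor k/e\rfloor}{r \brack k \bmod e}_\omega$ where $k = e\lfloor k/e\rfloor + (k\bmod e)$ and $r = es \bmod e = 0$, so in fact ${es \brack k}_\omega = \binom{s}{k/e}$ if $e\mid k$ and $0$ otherwise. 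Substituting this into the defining sum for $h_n(q)$, only the terms with $e \mid k$, $e \mid \ell$ survive (the constraints $\ell \le k \le n$, $n \le k+\ell$ rescale correctly), and one reads off \eqref{eq:an1z}: $h_n(\omega) = \sum_{k',\ell'} \binom{s}{k'}^2\binom{s}{\ell'}\binom{k'}{\ell'}\binom{k'+\ell'}{s}\,\omega^{f(es,ek',e\ell')}$, and since $f$ has integer coefficients and $es,ek',e\ell'$ are multiples of $e$ while $\omega^e = 1$... wait, that is not quite right because $f$ is quadratic, so $f(es,ek',e\ell') = e^2 f(s,k',\ell')$, hence $\omega^{f(es,ek',e\ell')} = \omega^{e^2 f(s,k',\ell')} = (\omega^e)^{e f(s,k',\ell')} = 1$. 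This gives $h_n(\omega) = h_s(1) = h_{n/e}(1)$, which is \eqref{eq:an1z}.

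For the derivative formulas, the natural approach is to write $h_n(q) = \sum_{k,\ell} c_{k,\ell}\, B_{k,\ell}(q)\, q^{f(n,k,\ell)}$ where $c_{k,\ell}$ are the (constant) integer coefficients and $B_{k,\ell}(q) = {n\brack k}_q^2{n\brack \ell}_q{k\brack \ell}_q{k+\ell\brack n}_q$, and differentiate using the product rule. The first derivative of each $q$-binomial at $\omega$ can be computed via the logarithmic derivative: if ${N \brack K}_q = \prod_j \frac{q^{a_j}-1}{q^{b_j}-1}$ (a quotient of cyclotomic-like factors), then $\frac{d}{dq}\log {N\brack K}_q$ is a sum of terms $\frac{a q^{a-1}}{q^a-1}$, each of which has a well-understood value or pole at $\omega$ depending on whether $e \mid a$. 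The crucial bookkeeping is that when we evaluate $B_{k,\ell}'(\omega)$ we must track which binomials vanish at $\omega$ (forcing $e\mid k$, $e\mid\ell$ for a nonzero contribution, unless exactly one factor vanishes simply, in which case it can pair with the zero to give a finite limit — this is the source of the $\mathrm{ord}(\omega)^2$ versus $\mathrm{ord}(\omega)^4$ dichotomy in \eqref{eq:an3z}). A clean way to organize this is to use the known expansion (compare \eqref{straubres2} and \eqref{eq:aperyqelegant}) of a single $q$-binomial: ${N \brack K}_q \equiv {N/e \brack K/e}_{q^{e^2}}\bigl(1 + (\text{linear in } q^e-1) + (\text{quadratic})\bigr) \bmod \Phi_e(q)^3$ when $e\mid K$, with explicit coefficients involving $\binom{N/e}{K/e}$ times a polynomial in $N,K$. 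Multiplying four such expansions and the monomial $q^{f(n,k,\ell)} = q^{e^2 f(s,k',\ell')}\cdot(\text{correction})$, collecting terms modulo $(q^e-1)^3$, and matching the $(q^e-1)$ and $(q^e-1)^2$ coefficients yields \eqref{eq:an2z} and \eqref{eq:an3z}. The polynomials $Q_1, Q_2$ are exactly what this collection produces; verifying that the combinatorial identity for the cross-terms simplifies to the stated $Q_1, Q_2$ is a finite (if lengthy) computation, best done by expanding $W(x,y,z) := xz - y^2 - z^2 + \tfrac{3xy+yz-x^2}{2} + f$ and its square symbolically.

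Once \eqref{eq:an1z}--\eqref{eq:an3z} are established, the final claim \eqref{eq:qzetaexp} is immediate: the hypotheses of Corollary \ref{corexpansion} require that $h_n(q)$ satisfy the $q$-Gauss congruences of order $3$. Equation \eqref{eq:an1z} together with Corollary \ref{cor:sp} gives the $q$-Gauss congruences themselves; equations \eqref{eq:an2z} and \eqref{eq:an3z} show that $\omega \mapsto \omega h_n'(\omega)$ and $\omega \mapsto \omega^2 h_n''(\omega)$ depend only on $\mathrm{ord}(\omega)$ (each right-hand side is a function of $e$ and $n/e$ alone), which is precisely Definition \ref{def:deforder} for $r = 3$. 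Hence $h_n(q)$ satisfies the $q$-Gauss congruences of order $3$ with $S = \mathbb{P}$, and Corollary \ref{corexpansion} applies verbatim with $a_n(q) = h_n(q)$ to give \eqref{eq:qzetaexp} for all $m\ge 1$ and $n\ge 2$ with $(n,6)=1$. The main obstacle is the second-derivative computation: handling the terms where a $q$-binomial factor has a \emph{double} zero at $\omega$ (when $e\mid N$ but the relevant partial denominators also vanish) versus a simple zero, and correctly accounting for the interaction between these vanishing orders and the monomial $q^{f}$, is where errors are most likely and where the precise shape of $Q_1$ (the $\mathrm{ord}(\omega)^4$ part) and $Q_2$ (the $\mathrm{ord}(\omega)^2$ part) is determined. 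I expect the argument to mirror \S\ref{sec:superqapery} closely, with the extra binomial factors ${n\brack \ell}_q{k\brack \ell}_q{k+\ell\brack n}_q$ contributing additional product-rule terms but no new conceptual difficulty.
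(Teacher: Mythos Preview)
Your overall framework is exactly the paper's: compute $a_n(\omega)$, $\omega a_n'(\omega)$, $\omega^2 a_n''(\omega)$ explicitly for $\omega \in \mu_n$, verify they depend only on $\mathrm{ord}(\omega)$, and then invoke Corollary~\ref{corexpansion}. Your treatment of \eqref{eq:an1z} is correct and matches the paper, and your observation that the final claim \eqref{eq:qzetaexp} follows immediately from Definition~\ref{def:deforder} and Corollary~\ref{corexpansion} is right.

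The gap is in the second-derivative computation. Your proposed tool---multiplying expansions of the form ${N \brack K}_q \equiv {N/e \brack K/e}_{q^{e^2}}(1+\cdots) \bmod \Phi_e(q)^3$---only applies when $e \mid N$ and $e \mid K$. But in $q^2 a_n''(q)$ there are terms, namely $2\big(q{n \brack k}_q'\big)^2 {n \brack \ell}_q{k \brack \ell}_q{k+\ell \brack n}_q$ and $2{n \brack k}_q^2 q^2 {n \brack \ell}_q'{k \brack \ell}_q'{k+\ell \brack n}_q$, that do \emph{not} vanish at $\omega$ when $e \nmid k$ (respectively $e \nmid \ell$), because both vanishing factors are differentiated. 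For these ``off-diagonal'' terms no expansion of the type you cite is available; instead you need the explicit formula for $\omega^{\binom{k}{2}+1}{n \brack k}'_\omega$ when $\omega \in \mu_n \setminus \mu_k$ (the second half of Corollary~\ref{cor:binomder}), and after writing $k = ej+i$ with $1 \le i \le e-1$ the sum over $i$ produces $\sum_{i=1}^{e-1} \frac{\omega^i}{(\omega^i-1)^2}$, which is handled by Lemma~\ref{lem:rootsunitysum}. The paper organizes this by writing $q^2 a_n''(q) = S_{n,1}+S_{n,2}+S_{n,3}$, where $S_{n,1}$ collects all terms retaining a factor ${n \brack k}_q$ and at least one of ${n \brack \ell}_q$, ${k \brack \ell}_q$ (so only $e \mid k,\ell$ contribute), while $S_{n,2}$ and $S_{n,3}$ are the two exceptional terms above; each of the latter is then split according to whether the relevant index is divisible by $e$. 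Your diagnosis that the $\mathrm{ord}(\omega)^2$ versus $\mathrm{ord}(\omega)^4$ split comes from this vanishing dichotomy is also not quite right: \emph{both} the diagonal and off-diagonal pieces contribute to \emph{both} powers of $\mathrm{ord}(\omega)$, because already $\omega^2{n \brack k}''_\omega$ for $e \mid k$ contains the mixed term $\frac{n\cdot\mathrm{ord}(\omega)-5}{12}$ from Corollary~\ref{cor:binomder}.
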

Specializing \eqref{eq:qzetaexp} to $n=p^k$ ($p \ge 5$ a prime, $k \ge 1$), we obtain the  supercongruence \eqref{eq:conseqzetaq} (since $f_0(1)=a_{p^{k-1}m}(1),f_1(1)=f_2(1)=0$). 
When $n=p\ge 5$ a prime, \eqref{eq:qzetaexp} simplifies to
\begin{equation*}
a_{mp}(q) \equiv a_m(1) + b_m (q^p-1)+c_m(q^p-1)^2  \bmod [p]_q^3
\end{equation*}
for
\begin{equation*}
\begin{split}
b_m &= p a_{m,xz-y^2-z^2+ \frac{3xy+yz-x^2}{2}+f(x,y,z)},\\
c_m &= \frac{1}{2}\left(p^2 a_{m,Q_1}+a_{m,Q_2}-(p-1)a_{m,xz-y^2-z^2+ \frac{3xy+yz-x^2}{2}+f(x,y,z)}\right).
\end{split}
\end{equation*}
In \S\ref{sec:altern} we show that for $m \ge 1$ with $(m,6)=1$, we also have the more elegant $q$-supercongruence
\begin{equation}\label{eq:zetaqelegant}
a_{nm}(q) \equiv a_{n}(q^{m^2})-(q^m-1)^2 \frac{m^2-1}{24} a_{n,x^2+xy-yz}  \bmod \Phi_m(q)^3.
\end{equation}
The proofs of Theorems~\ref{thm:superqbinom}, \ref{thm:aprey}, \ref{thm:zeta} involve differentiating the relevant sequences and evaluating them at roots of unity, by using the values of the derivatives of the $q$-binomial coefficients at roots of unity. These values are given in \S\ref{sec:derivativesofbinom}, and especially in Corollary~\ref{cor:binomder}, which might be of independent interest.
\section{Criteria for Gauss congruences}\label{seccritgauss}
Here we review some classical results on Gauss congruences, mostly for comparison with results we obtain on $q$-Gauss congruences. 
\begin{proposition}\label{gausscrit} \cite[Ch.~5,~Ex.~5.2(a)\text{ and its solution}]{stanley1999}
Let $\{a_n \}_{n \ge 1}$ be a sequence of integers. The following conditions are equivalent.
\begin{enumerate}
\item $\{a_n\}_{n \ge 1}$ satisfies the Gauss congruences
\item For all $n,k \ge 1$ and all primes $p$: $a_{p^k n} \equiv a_{p^{k-1}n} \bmod {p^k}$
\item $\exp(\sum_{n \ge 1} a_n x^n/n) \in \mathbb{Z}[[x]]$
\end{enumerate}
\end{proposition}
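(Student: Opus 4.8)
\textbf{Proof proposal for Proposition~\ref{gausscrit}.}

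The plan is to prove the cycle of implications $(1)\Rightarrow(2)\Rightarrow(3)\Rightarrow(1)$, which is the most economical route. The equivalence $(1)\Leftrightarrow(2)$ is elementary number theory: I would first show that the single congruence \eqref{neckcond} for a given $n$ is equivalent to the simultaneous validity of \eqref{primecond} for every prime power $p^k \| n$, up to suitable multiplicities. Concretely, fix $n$ and write $n = p^k n'$ with $p \nmid n'$. Grouping the divisors $d \mid n$ according to whether $p \mid d$, one gets
\begin{equation*}
\sum_{d \mid n} \mu(d) a_{n/d} = \sum_{e \mid n'} \mu(e)\bigl(a_{n/e} - a_{n/(pe)}\bigr) = \sum_{e \mid n'}\mu(e)\bigl(a_{p^k n'/e} - a_{p^{k-1}n'/e}\bigr),
\end{equation*}
so if \eqref{primecond} holds for $p$ (with exponent $k$ and the integer $n'/e$ in place of $n$) then each summand is divisible by $p^k$, hence so is the whole sum; running this over all primes dividing $n$ and using CRT gives \eqref{neckcond}. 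For the converse $(1)\Rightarrow(2)$ I would induct on $k$ (and on the number of prime factors of $n$), isolating the term $a_{p^k n} - a_{p^{k-1}n}$ from the $\mu$-sum and absorbing the remaining terms using the inductive hypothesis; this is the standard M\"obius-inversion bookkeeping.

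For $(2)\Leftrightarrow(3)$ I would take logarithmic derivatives. Write $E(x) = \exp\bigl(\sum_{n\ge1} a_n x^n/n\bigr)$, so that $x E'(x)/E(x) = \sum_{n\ge1} a_n x^n$. The point is the classical fact (a form of the ``necklace'' or Dwork-type criterion) that a power series $E(x) = 1 + \sum_{n\ge1} c_n x^n \in \mathbb{Q}[[x]]$ with $c_n \in \mathbb{Z}$-denominators controlled lies in $\mathbb{Z}[[x]]$ if and only if $E(x^p) \equiv E(x)^p \bmod p\,\mathbb{Z}_p[[x]]$ for all primes $p$; and this last congruence, after taking $x\,d/dx\log$ of both sides and comparing coefficients, translates exactly into the family of congruences $a_{p^k n} \equiv a_{p^{k-1} n} \bmod p^k$. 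I would either invoke this directly as \cite[Ch.~5, Ex.~5.2(a)]{stanley1999} or reprove it: assuming $(3)$, the integrality of $E$ lets one expand $\log E(x)$ $p$-adically and extract \eqref{primecond}; assuming $(2)$, one shows by induction on the total degree that the coefficients of $E$ have no $p$ in the denominator, for each $p$ separately, using that the congruences $(2)$ are precisely what is needed to clear denominators at each stage.

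The main obstacle is the ``hard direction'' of $(2)\Rightarrow(3)$: proving that the exponential has integer coefficients. The subtlety is that one must control all primes $p$ and all powers of $p$ simultaneously in the denominators of the coefficients of $E(x)$, and the naive term-by-term estimate only gives a bound on the $p$-adic valuation of the $n$-th coefficient that barely matches what congruences $(2)$ supply — so the induction has to be set up carefully (e.g. tracking $v_p$ of the coefficient of $x^n$ against $v_p(n!)$ or against the \v{C}eby\v{s}ev-type bound $\sum_{k\ge1}\lfloor n/p^k\rfloor$). Since the paper cites Stanley for this proposition and only uses it for comparison with the $q$-analogue developed later, I would in practice keep this part brief and defer the full denominator-clearing argument to the reference, presenting only the $(1)\Leftrightarrow(2)$ M\"obius computation in detail.
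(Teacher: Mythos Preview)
The paper does not actually supply a proof of Proposition~\ref{gausscrit}: it is stated with a citation to Stanley and no \texttt{proof} environment follows. So there is nothing to compare against line by line.

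That said, your sketch is correct in outline, and your treatment of the equivalence $(1)\Leftrightarrow(2)$ matches what the paper \emph{does} prove elsewhere: the pairing-of-divisors argument you describe (grouping $d\mid n$ according to whether $p\mid d$ and using CRT over the prime powers dividing $n$) is exactly the proof the paper gives for the more general Lemma~\ref{lem:equivs}. Your plan for $(2)\Leftrightarrow(3)$ via the Dwork-type criterion $E(x^p)\equiv E(x)^p \bmod p\,\mathbb{Z}_p[[x]]$ is the standard one and is what Stanley's solution does; you are right to flag $(2)\Rightarrow(3)$ as the step requiring care, and right that deferring to the cited reference is consistent with how the paper handles it.

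One small correction of emphasis: in your $(1)\Rightarrow(2)$ direction you propose to ``induct on $k$ and on the number of prime factors of $n$'', but in fact $(2)$ for a fixed prime $p$ and exponent $k$ follows immediately from \eqref{neckcond} applied to $n=p^k m$ with $p\nmid m$ by the very same pairing identity you wrote down (read in the other direction, together with M\"obius inversion on the $n'$-part). No induction is needed; the paper's proof of Lemma~\ref{lem:equivs} likewise avoids it by simply choosing $n=p^k$ in the defining congruence.
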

The next proposition generalizes the equivalence between the first two conditions in Proposition~\ref{gausscrit}.
\begin{proposition}\cite[Prop.~11]{almkvist2006}
Let $\{a_n\}_{n \ge 1}$ be a sequence of integers and let $m$ be a positive integer. The following conditions are equivalent.
\begin{enumerate}
\item For all $n \ge 1$: $\sum_{d \mid n} \mu(n/d) a_d \equiv 0 \bmod {n^m}$
\item For all $n,k \ge 1$ and primes $p$: $a_{p^k n} \equiv a_{p^{k-1}n} \bmod {p^{km}}$
\end{enumerate}
\end{proposition}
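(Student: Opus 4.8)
The plan is to pass to the M\"obius transform $b_n := \sum_{d\mid n}\mu(n/d)\,a_d \in \mathbb{Z}$, so that M\"obius inversion gives $a_N = \sum_{d\mid N} b_d$ and condition (1) becomes the single statement that $n^m \mid b_n$ for all $n\ge 1$. Both implications then follow from elementary bookkeeping with divisors; no induction is needed.

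For (1) $\Rightarrow$ (2) I would fix a prime $p$ and integers $n,k\ge 1$, and write $n = p^j n'$ with $p\nmid n'$. The divisors of $p^k n = p^{k+j}n'$ that fail to divide $p^{k-1}n = p^{k+j-1}n'$ are precisely the integers $p^{k+j}d'$ with $d'\mid n'$, so expanding both terms through $b$ gives
\[
a_{p^k n} - a_{p^{k-1}n} = \sum_{d'\mid n'} b_{p^{k+j}d'}.
\]
Each summand is divisible by $(p^{k+j}d')^m$, hence by $p^{(k+j)m}$, and $(k+j)m \ge km$; this yields (2).

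For (2) $\Rightarrow$ (1) I would invoke the Chinese Remainder Theorem: since $n^m = \prod_{p\mid n} p^{m v_p(n)}$ is a product of pairwise coprime prime powers, it suffices to prove $p^{m v_p(n)} \mid b_n$ for each prime $p\mid n$. Fixing such a $p$ and setting $j = v_p(n)\ge 1$, $n = p^j n'$, $p\nmid n'$, I would compute $b_n$ by splitting the defining sum according to the exponent of $p$ in the divisor: writing $d = p^a d'$ with $d'\mid n'$, the factor $\mu(n/d) = \mu(p^{j-a})\,\mu(n'/d')$ vanishes unless $a\in\{j-1,j\}$, where $\mu(p^{j-a})$ is $-1$ and $+1$ respectively, so the sum collapses to
\[
b_n = \sum_{d'\mid n'} \mu(n'/d')\bigl(a_{p^j d'} - a_{p^{j-1}d'}\bigr).
\]
By hypothesis (2), applied with $n\mapsto d'$ and $k\mapsto j$, each difference $a_{p^j d'} - a_{p^{j-1}d'}$ is divisible by $p^{jm}$, whence $p^{jm} = p^{m v_p(n)}\mid b_n$, as wanted.

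The whole argument is routine; the only slightly delicate point is the M\"obius bookkeeping in the second implication --- checking that $\mu(n/d)$ vanishes once $v_p(d)\le v_p(n)-2$, and that the two surviving terms combine into the telescoping difference $a_{p^j d'} - a_{p^{j-1}d'}$ --- together with the observation that divisibility by $n^m$ splits, via CRT, into exactly the prime-power congruences that hypothesis (2) supplies. It is worth stressing that the proof is uniform in $m$: it is verbatim the classical argument for the case $m=1$ (the equivalence of the first two conditions in Proposition \ref{gausscrit}), with every exponent carrying an extra factor $m$, which is precisely why the stronger version costs nothing extra.
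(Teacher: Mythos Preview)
Your proof is correct. The paper does not supply its own proof of this proposition; it simply cites \cite[Prop.~11]{almkvist2006} and moves on. So there is no ``paper's proof'' to compare against directly.

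That said, the paper does prove the closely related first part of Lemma~\ref{lem:equivs} (essentially the $m=1$ case), and the style there differs slightly from yours: rather than passing to the M\"obius transform $b_n$, the paper works directly with the sum $\sum_{d\mid n}\mu(d)\,a_{nm/d}$, partitioning the squarefree divisors of $n$ into pairs $\{d, dp_i\}$ so that each pair contributes $\mu(d)\bigl(a_{nm/d}-a_{nm/(dp_i)}\bigr)$, divisible by $p_i^{e_i}$. This is exactly your $(2)\Rightarrow(1)$ computation, just phrased without the detour through $b_n$. Your formulation via M\"obius inversion has the advantage that the $(1)\Rightarrow(2)$ direction becomes a one-line identity, $a_{p^kn}-a_{p^{k-1}n}=\sum_{d'\mid n'} b_{p^{k+j}d'}$, rather than requiring a separate argument. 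Both approaches are standard and, as you note, uniform in $m$.
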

The following result is a corollary of the Lagrange inversion theorem.
\begin{proposition}\label{propnthcoeff}
Let $f \in \mathbb{Z}[[u]]$ with $f(0)=1$. The sequence $\{[u^n]f^n(u)\}_{n \ge 1}$ satisfies the Gauss congruences.
\end{proposition}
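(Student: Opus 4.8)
The plan is to verify the third condition of Proposition~\ref{gausscrit}, namely that
$\exp\!\big(\sum_{n\ge 1} a_n x^n/n\big)\in\mathbb{Z}[[x]]$ where $a_n=[u^n]f(u)^n$; the Gauss congruences then follow from the equivalence of conditions (1) and (3) there. Note first that $a_n\in\mathbb{Z}$ because $f\in\mathbb{Z}[[u]]$. The central device is the compositional inverse. Since $f(0)=1$, the series $x=u/f(u)$ lies in $u\mathbb{Z}[[u]]$ with linear coefficient $1$, hence admits a compositional inverse $u=u(x)\in x\mathbb{Z}[[x]]$, characterized by the functional equation $u(x)=x\,f(u(x))$. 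In particular $u(x)$ has integer coefficients and $f(u(x))=u(x)/x\in\mathbb{Z}[[x]]$.

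Next I would apply the Lagrange inversion formula in the form: if $u(x)=x\,f(u(x))$ then for every $G\in\mathbb{Q}[[u]]$ and $n\ge 1$,
\[
n\,[x^n]\,G(u(x))=[u^{n-1}]\big(G'(u)f(u)^n\big).
\]
Take $G(u)=\log f(u)$, which is legitimate as $f$ has constant term $1$; then $G'(u)f(u)^n=f'(u)f(u)^{n-1}=\tfrac1n\tfrac{d}{du}\big(f(u)^n\big)$, so
\[
[u^{n-1}]\big(G'(u)f(u)^n\big)=\tfrac1n\,[u^{n-1}]\tfrac{d}{du}\big(f(u)^n\big)=[u^n]f(u)^n=a_n.
\]
Hence $[x^n]\log f(u(x))=a_n/n$ for all $n\ge 1$, i.e. $\sum_{n\ge 1} a_n x^n/n=\log f(u(x))$. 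Exponentiating yields $\exp\!\big(\sum_{n\ge 1} a_n x^n/n\big)=f(u(x))\in\mathbb{Z}[[x]]$, which is precisely the third condition of Proposition~\ref{gausscrit}, and we are done.

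The argument is essentially bookkeeping around Lagrange inversion, so I do not expect a serious obstacle; the two points needing a little care are (i) the claim that $u(x)$ has integer coefficients — the standard fact that the compositional inverse of an element of $x\mathbb{Z}[[x]]$ with linear coefficient $\pm1$ again lies in $x\mathbb{Z}[[x]]$ — and (ii) choosing the test function $G=\log f$ so that the coefficient extraction collapses to $a_n$. As an alternative to the Lagrange-inversion step, one may differentiate $u=xf(u)$ to obtain $\tfrac{1}{1-xf'(u)}=\tfrac{x\,u'(x)}{u(x)}$, recognize the left-hand side as $\sum_{n\ge 0} a_n x^n$ via the Lagrange--B\"urmann formula, and then integrate term by term; this lands on the same identity $\sum_{n\ge 1} a_n x^n/n=\log\big(u(x)/x\big)$ and hence the same conclusion.
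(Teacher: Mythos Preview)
Your proof is correct and follows the same overall strategy as the paper: verify condition (3) of Proposition~\ref{gausscrit} via Lagrange inversion. The packaging of the integrality step differs, though. The paper quotes Gessel's identity \eqref{lagrange}, namely $\exp\big(\sum_{n\ge 1} a_n x^n/n\big)=\sum_{n\ge 1}\big([u^{n-1}]f^n(u)/n\big)x^{n-1}$, and then proves each coefficient is an integer by the coprime-denominator trick $[u^{n-1}]f^n=\tfrac{n}{n-1}[u^{n-2}]f'f^{n-1}$, so that $[u^{n-1}]f^n(u)/n\in\tfrac{1}{n}\mathbb{Z}\cap\tfrac{1}{n-1}\mathbb{Z}=\mathbb{Z}$. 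You instead identify the exponential \emph{globally} as $f(u(x))=u(x)/x$, where $u(x)$ is the compositional inverse of $u/f(u)$, and get integrality for free from the fact that compositional inverses preserve $x\mathbb{Z}[[x]]$. Your choice of test function $G=\log f$ in the Lagrange formula is a neat shortcut to the identity $\sum a_n x^n/n=\log f(u(x))$; the paper effectively takes $G(u)=u$ (via Gessel) and arrives at the same series. Your argument is slightly more conceptual; the paper's is more self-contained in that it does not invoke integrality of compositional inverses.
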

\begin{proof}
Let $a_n=[u^n]f^n(u)$. From \cite[Eq.~(3.8)]{gessel1980}, we have
\begin{equation}\label{lagrange}
\exp\big( \sum_{n \ge 1} a_n x^n / n \big) = \sum_{ n \ge 1} \left([u^{n-1}]f^n(u) /n\right) x^{n-1}.
\end{equation}
Since $(f^n)' = n\cdot f' \cdot f^{n-1}$, we have
\begin{equation}\label{diffiden}
[u^{n-1}]f^n(u) = \frac{1}{n-1}[u^{n-2}](f^n)'(u) = \frac{n}{n-1} [u^{n-2}]f'(u) f^{n-1}(u) .
\end{equation}
for all $n \ge 2$. From \eqref{diffiden}, it follows that $[u^{n-1}]\frac{f^n(u)}{n} \in \frac{1}{n}\mathbb{Z} \cap \frac{1}{n-1} \mathbb{Z} = \mathbb{Z}$, and so from  \eqref{lagrange} it follows that $\exp( \sum_{n \ge 1} a_n x^n / n) \in \mathbb{Z}[[x]]$. Proposition~\ref{gausscrit} applied to $\{a_n\}_{n \ge 1}$ concludes the proof of the proposition.
\end{proof}
\begin{cor}\label{corexamg}
The following sequences satisfy the Gauss congruences.
\begin{enumerate}
\item \cite{zarelua2008} $a_n = \mathrm{Tr}(A^n)$, where $A \in \mathrm{Mat}_{m}(\mathbb{Z})$.
\item $a_n = \binom{an}{n}$ and $a_n = \binom{an-1}{n}$, where $a \ge 2$ is an integer.
\item $a_n = \sum_{k=0}^{\lfloor \frac{n}{2} \rfloor} \binom{n}{k} \binom{n-k}{k}$.
\end{enumerate}
\end{cor}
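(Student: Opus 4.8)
The plan is to deduce each of the three items from the machinery just developed: parts (2) and (3) will follow from Proposition \ref{propnthcoeff}, once the relevant sequence is written in the form $\{[u^n]f(u)^n\}_{n\ge1}$ for an appropriate $f\in\mathbb{Z}[[u]]$ with $f(0)=1$, while part (1) will follow from Proposition \ref{gausscrit} in the form ``(3) implies (1)''. In each case the only real content is identifying the correct generating function.

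For part (1), I would first establish the classical identity of formal power series
\begin{equation*}
\exp\Big(\sum_{n\ge1}\mathrm{Tr}(A^n)\,\frac{x^n}{n}\Big)=\frac{1}{\det(I-xA)}.
\end{equation*}
One route is to pass to an algebraic closure and write $\mathrm{Tr}(A^n)=\sum_i\lambda_i^n$ for the eigenvalues $\lambda_i$ of $A$, so that $\sum_{n\ge1}\mathrm{Tr}(A^n)x^n/n=-\sum_i\log(1-\lambda_i x)=-\log\det(I-xA)$; a route avoiding eigenvalues is to compute $\frac{d}{dx}\log\det(I-xA)=-\mathrm{Tr}\big(A(I-xA)^{-1}\big)=-\sum_{n\ge1}\mathrm{Tr}(A^n)x^{n-1}$ and integrate, noting that $\log\det(I-xA)$ vanishes at $x=0$. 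Since $A$ has integer entries, $\det(I-xA)\in\mathbb{Z}[x]$ has constant term $1$, so its reciprocal lies in $\mathbb{Z}[[x]]$; condition (3) of Proposition \ref{gausscrit} then gives the Gauss congruences for $\{\mathrm{Tr}(A^n)\}_{n\ge1}$ (this is the argument of \cite{zarelua2008}).

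For parts (2) and (3) I would exhibit $f$ explicitly. The central trinomial coefficient equals $a_n=[x^n](1+x+x^2)^n$, so $f(x)=1+x+x^2$ settles part (3) at once. For part (2), $\binom{an}{n}=[u^n]\big((1+u)^a\big)^n$, so take $f(u)=(1+u)^a$; and for $\binom{an-1}{n}$ I would use the identity $\binom{an-1}{n}=[u^n]\big((1-u)^{1-a}\big)^n$, which holds because, by the binomial series and the reflection $\binom{-m}{n}=(-1)^n\binom{m+n-1}{n}$, one has $[u^n](1-u)^{(1-a)n}=(-1)^n\binom{-(a-1)n}{n}=\binom{an-1}{n}$. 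Since $a\ge2$ the exponent $1-a$ is a non-positive integer, so $(1-u)^{1-a}=\sum_{k\ge0}\binom{a-2+k}{k}u^k$ lies in $\mathbb{Z}[[u]]$ with constant term $1$, and Proposition \ref{propnthcoeff} applied to $f(u)=(1-u)^{1-a}$ finishes part (2).

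I do not expect a substantial obstacle: the corollary is essentially a dictionary of generating functions fed into the two propositions. The only steps demanding a little care are the verification of the log-determinant identity and the integrality of its exponentiated reciprocal in part (1), and the mildly non-obvious observation in part (2) that $\binom{an-1}{n}$ --- which at first glance is not a ``diagonal'' coefficient of an $n$-th power --- becomes one once one allows the negative binomial series $(1-u)^{1-a}$.
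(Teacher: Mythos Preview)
Your proposal is correct and essentially identical to the paper's proof: the paper also uses Proposition~\ref{gausscrit} via the identity $\exp\big(\sum_{n\ge1}\mathrm{Tr}(A^n)x^n/n\big)=1/\det(I-xA)$ for part (1), and Proposition~\ref{propnthcoeff} with $f(x)=(1+x)^a$, $f(x)=(1-x)^{-(a-1)}$, and $f(x)=1+x+x^2$ for parts (2) and (3). The only cosmetic difference is that the paper remarks $1/\det(I-xA)=\sum_i\mathrm{Tr}\,\mathrm{Sym}^i(A)\,x^i$ to witness integrality, whereas you simply invert a polynomial in $\mathbb{Z}[x]$ with constant term $1$.
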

\begin{proof}
For the first part, note that
\begin{equation*}
\exp\big(\sum_{n \ge 1} \frac{\mathrm{Tr}(A^n) x^n}{n}\big) = \exp\left( \mathrm{Tr}( -\ln(I-Ax))\right) = \frac{1}{\det (I-Ax)} = \sum_{i \ge 0} \mathrm{Tr} \mathrm{Sym}^i (A) x^i, 
\end{equation*} 
where $\mathrm{Sym}^i (A)$ is the $i$-th symmetric power of $A$. Thus, Proposition~\ref{gausscrit} implies that $\{\mathrm{Tr}(A^n)\}_{n \ge 1}$ satisfies the Gauss congruences.

For the second part, apply Proposition~\ref{propnthcoeff} with $f(x) = (1+x)^a$ and $f(x) = (1-x)^{-(a-1)}$. For the third part, apply Proposition~\ref{propnthcoeff} with $f(x) =1+x+x^2$.
\end{proof}
We also have a $p$-adic version of Proposition~\ref{gausscrit}.
\begin{proposition}
Let $\{a_n \}_{n \ge 1}$ be a sequence of integers. Fix a prime $p$, let $\mathbb{Z}_{p}$ be the ring of $p$-adic integers and $\mathbb{Q}_p$ be its fraction field. Set $F(x) = \exp(\sum_{n \ge 1} a_n x^n/n) \in \mathbb{Q}[[x]] \subseteq \mathbb{Q}_p[[x]]$. The following conditions are equivalent.
\begin{enumerate}
\item $\{a_n\}_{n \ge 1}$ satisfies the Gauss congruences with respect to $\{p\}$
\item $F(x) \in \mathbb{Z}_p[[x]]$
\item $F(x^p)/F(x)^p \in 1+px\mathbb{Z}_p[[x]]$
\end{enumerate}
\end{proposition}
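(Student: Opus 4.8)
The plan is to prove the equivalences $(2)\Leftrightarrow(3)$ and $(1)\Leftrightarrow(3)$. The first is exactly the Dieudonné–Dwork lemma applied to $F$, which lies in $1+x\mathbb{Q}_p[[x]]$ since $F(0)=\exp(0)=1$; the second is a direct computation with the $p$-adic logarithm. To make $(1)$ concrete, recall that by Lemma \ref{lem:equivs}(1) condition $(1)$ — the Gauss congruences with respect to $\{p\}$ — is equivalent to the congruences $a_{p^kn}\equiv a_{p^{k-1}n}\bmod p^k$ for all $n,k\ge 1$. Writing $n=p^jn'$ with $p\nmid n'$ shows this family is equivalent to its restriction to $p\nmid n$ (the congruence for general $n$ follows from the one for $n'$ with exponent $k+j$, since $p^k\mid p^{k+j}$). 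So it suffices to show that $(3)$ holds if and only if $a_{p^kn}\equiv a_{p^{k-1}n}\bmod p^k$ for every $k\ge 1$ and every $n$ coprime to $p$.

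Next I would record an elementary transfer, valid for every prime $p$: for $G(x)\in 1+x\mathbb{Q}_p[[x]]$ one has $G(x)\in 1+px\mathbb{Z}_p[[x]]$ if and only if $\log G(x)\in px\mathbb{Z}_p[[x]]$. Both directions follow by expanding $\log(1+pH)=\sum_{k\ge 1}\frac{(-1)^{k-1}}{k}(pH)^k$ and $\exp(pH)=\sum_{k\ge 0}\frac{1}{k!}(pH)^k$ termwise, using the valuation bounds $v_p(p^k/k)\ge 1$ and $v_p(p^k/k!)\ge 1$ for all $k\ge 1$ (these hold since $v_p(k)\le\log_2 k\le k-1$ and $v_p(k!)=(k-s_p(k))/(p-1)$, where $s_p(k)$ is the sum of base-$p$ digits of $k$); there is no convergence issue, as the coefficient of $x^m$ receives contributions from only finitely many $k$. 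Applying this with $G(x)=F(x^p)/F(x)^p$ turns $(3)$ into the statement $\log\big(F(x^p)/F(x)^p\big)\in px\mathbb{Z}_p[[x]]$. Since $\log F(x)=\sum_{n\ge 1}(a_n/n)x^n$,
\begin{equation*}
\log\frac{F(x^p)}{F(x)^p} = \sum_{n\ge 1}\frac{a_n}{n}\,x^{pn} - p\sum_{n\ge 1}\frac{a_n}{n}\,x^{n},
\end{equation*}
so the coefficient of $x^m$ equals $-pa_m/m$ when $p\nmid m$, and $\dfrac{a_{p^{k-1}n}-a_{p^kn}}{p^{k-1}n}$ when $m=p^kn$ with $k\ge 1$, $p\nmid n$. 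The first kind lies in $p\mathbb{Z}_p$ automatically because $m$ is a unit; the second has valuation $v_p(a_{p^{k-1}n}-a_{p^kn})-(k-1)$, so it lies in $p\mathbb{Z}_p$ exactly when $a_{p^kn}\equiv a_{p^{k-1}n}\bmod p^k$. Hence $(3)$ is equivalent to the congruence family isolated in the first paragraph, which proves $(1)\Leftrightarrow(3)$.

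It remains to prove $(2)\Leftrightarrow(3)$, i.e.\ the Dieudonné–Dwork lemma for $F$. If $F\in 1+x\mathbb{Z}_p[[x]]$, then reducing mod $p$ and using Frobenius in $\mathbb{F}_p[[x]]$, $\big(\sum_j c_jx^j\big)^p\equiv\sum_j c_j^p x^{pj}\equiv\sum_j c_j x^{pj}\bmod p$, so $F(x)^p\equiv F(x^p)\bmod p$; dividing by the unit $F(x)^p$ and noting both series have constant term $1$ gives $(3)$. Conversely, writing $F(x^p)=F(x)^pG(x)$ with $G\in 1+px\mathbb{Z}_p[[x]]$, one proves $c_m\in\mathbb{Z}_p$ by induction on $m$, comparing the coefficient of $x^m$: the only occurrence of $c_m$ in $[x^m]F(x)^p$ is the term $pc_m$, while every other $S_p$-orbit contributing to $[x^m]F(x)^p$ has size divisible by $p$, with the single exception of the diagonal orbit $(m/p,\dots,m/p)$, which arises only when $p\mid m$ and contributes $c_{m/p}^p\equiv c_{m/p}\bmod p$ by Fermat, matching $[x^m]F(x^p)=c_{m/p}$ on the left; also every positive-degree coefficient of $G$ is in $p\mathbb{Z}_p$. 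In all cases this forces $pc_m\in p\mathbb{Z}_p$, hence $c_m\in\mathbb{Z}_p$. I expect the main technical point to be this last induction — the orbit bookkeeping that pins down $pc_m$ with a $p$-divisible error term — although it is entirely standard; alternatively one may simply invoke the Dieudonné–Dwork lemma, after which the only real content is the logarithmic computation of the middle paragraph.
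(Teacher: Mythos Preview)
Your proof is correct, but it proceeds differently from the paper. Both you and the paper cite the Dieudonn\'e--Dwork criterion for $(2)\Leftrightarrow(3)$; the divergence is in how condition $(1)$ is linked to the others. The paper proves $(1)\Leftrightarrow(2)$ by rerunning the proof of Proposition~\ref{gausscrit} over $\mathbb{Z}_p$ in place of $\mathbb{Z}$: one gets that $F(x)\in\mathbb{Z}_p[[x]]$ iff $a_{q^kn}\equiv a_{q^{k-1}n}\bmod q^k\mathbb{Z}_p$ for all primes $q$, and then observes that for $q\neq p$ this is vacuous since $q$ is a unit in $\mathbb{Z}_p$. You instead prove $(1)\Leftrightarrow(3)$ directly, via the transfer $G\in 1+px\mathbb{Z}_p[[x]]\iff\log G\in px\mathbb{Z}_p[[x]]$ and an explicit computation of the coefficients of $\log\bigl(F(x^p)/F(x)^p\bigr)$. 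Your route is more self-contained---the paper's relies on unpacking a proof that is itself only cited from Stanley---and your coefficient analysis makes the role of the congruences $a_{p^kn}\equiv a_{p^{k-1}n}\bmod p^k$ completely transparent. The paper's route, on the other hand, is shorter to state and highlights that the $p$-local result is literally the global one with the irrelevant primes inverted away.
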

\begin{proof}
The equivalence of the second and the third conditions is known as the Dieudonn\'{e}-Dwork criterion, see \cite[\S~VII.2.3]{robert2000}. The equivalence of the first and the second conditions follows from the proof of the equivalence of the second and third conditions in Proposition~\ref{gausscrit}. Indeed, following the proof of Proposition~\ref{gausscrit} but working in the ring $\mathbb{Z}_p$ instead of $\mathbb{Z}$, we see that $F(x) \in \mathbb{Z}_p[[x]]$ holds if and only if
\begin{equation}\label{eq:condqs}
a_{r^k n} \equiv a_{r^{k-1}n} \bmod r^k \mathbb{Z}_p.
\end{equation}
for all $n,k \ge 1$ and all primes $r$. Since a prime $r$ is invertible in $\mathbb{Z}_p$ whenever $r \neq p$, condition \eqref{eq:condqs} is non-trivial only for $r=p$, in which case it becomes
\begin{equation*}
a_{p^k n} \equiv a_{p^{k-1}n} \bmod {p^k}
\end{equation*}
for all $n,k \ge 1$, as needed.
\end{proof}
\subsection{Proof of Lemma~\ref{lem:equivs}}
The first part of the lemma is proved as follows.

$\Rightarrow$: Assume that $\{a_n\}_{n \ge 1}$ satisfies the Gauss congruences with respect to $S$. Then for any $p\in S$, we may choose $n=p^k$ in \eqref{weakneckcond} and obtain \eqref{primecond}, as needed.

$\Leftarrow$: Assume that \eqref{primecond} holds for all $p \in S$. Let $n \in \mathbb{N}_{S}$, and suppose that $n$ factors as $n=\prod_{i=1}^{r} p_i^{e_i}$. Let $m \ge 1$. Fix $i \in \{1,2,\ldots,r\}$. Set $S_n = \{ d: d \text{ divides }n, \mu(d) \neq 0\}$ and $T_n = \{ d\in S_n: p_i \nmid d\}$. We partition $S_n$ into a disjoint union of pairs: $S_n = \cup_{d \in T_n} \{ d,dp_i\}$. Then
\begin{equation}\label{eq:sumpairs}
\sum_{d \mid n} \mu(d) a_{nm/d} = \sum_{d \in S_n} \mu(d) a_{nm/d} =\sum_{d \in T_n} \mu(d) (a_{nm/d} - a_{nm/dp_i}).
\end{equation}
Each summand in the right-hand side of \eqref{eq:sumpairs} is divisible by $p_i^{e_i}$ by \eqref{primecond}, which shows that $\sum_{d \mid n} \mu(d) a_{nm/d}$ is divisible by $p_i^{e_i}$. Since $i$ was arbitrary,  $\sum_{d \mid n} \mu(d) a_{nm/d}$ is divisible by $n$, as needed.

We continue to the second part of the lemma.

$\Rightarrow$: Assume that $\{a_n\}_{n \ge 1}$ satisfies the Gauss congruences with respect to $\mathbb{P}$. Choosing $m=1$ in \eqref{weakneckcond}, we see that $\{a_n\}_{n \ge 1}$ satisfies the Gauss congruences.

$\Leftarrow$: Assume that $\{a_n\}_{n \ge 1}$ satisfies the Gauss congruences. By Proposition~\ref{gausscrit}, we have that $a_{p^kn} \equiv a_{p^{k-1}n} \bmod p^k$ for all $n,k \ge 1$ and all primes $p$. Let $m \ge 1$. Replacing $n$ with $nm$, we see that $a_{p^knm} \equiv a_{p^{k-1}nm} \bmod p^k$ for all $n, k \ge 1$ and all primes $p$. By another application of Proposition~\ref{gausscrit} it follows that the sequence $b_n:=a_{nm}$ satisfies the Gauss congruences, which gives us \eqref{weakneckcond} with fixed $m$ and for all $n \ge 1$. Since $m$ was arbitrary, it follows that \eqref{weakneckcond} holds with $S=\mathbb{P}$, as needed. \qed
\section{Criteria for $q$-Gauss congruences}\label{secchar}
\subsection{Auxiliary lemmas}
\begin{lem}\label{lem:mob}\cite[Ch.~2]{ireland2013}
\begin{enumerate}
	\item The divisor sum $\sum_{d \mid n} \mu(d)$ equals $1$ if $n=1$, and is $0$ otherwise.
	\item The M\"obius function is multiplicative, that is, $\mu(n_1 n_2) = \mu(n_1) \mu(n_2)$ whenever $(n_1,n_2)=1$.
\end{enumerate}
\end{lem}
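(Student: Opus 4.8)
The plan is to establish part (2) first and then to deduce part (1) from it, since the two halves of the lemma interlock naturally.

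\emph{Part (2).} Assume $\gcd(n_1,n_2) = 1$. I would split into two cases according to whether the $n_i$ are squarefree. If some prime $p$ satisfies $p^2 \mid n_1$ (or symmetrically $p^2\mid n_2$), then $p^2\mid n_1n_2$, so both sides vanish: $\mu(n_1n_2) = 0$ and $\mu(n_1)\mu(n_2) = 0$. Otherwise both $n_1$ and $n_2$ are squarefree, say $n_1 = p_1\cdots p_a$ and $n_2 = q_1\cdots q_b$ with the $p_i$ (resp.\ the $q_j$) distinct primes. The coprimality hypothesis forces the two lists of primes to be disjoint, so $n_1n_2$ is a squarefree product of $a+b$ distinct primes, whence $\mu(n_1n_2) = (-1)^{a+b} = (-1)^a(-1)^b = \mu(n_1)\mu(n_2)$. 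The degenerate cases $n_1=1$ or $n_2=1$ correspond to $a=0$ or $b=0$ and are covered.

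\emph{Part (1).} The value at $n=1$ is $\mu(1) = 1$. For $n > 1$, write $n = p_1^{e_1}\cdots p_k^{e_k}$ with $k\ge 1$. Only squarefree divisors contribute to $\sum_{d\mid n}\mu(d)$, and these are precisely the $\prod_{i\in T}p_i$ for $T\subseteq\{1,\dots,k\}$, with $\mu$-value $(-1)^{|T|}$. Hence
\[
\sum_{d\mid n}\mu(d) \;=\; \sum_{j=0}^{k}\binom{k}{j}(-1)^j \;=\; (1-1)^k \;=\; 0,
\]
using $k\ge 1$ in the last step. Alternatively, once part (2) is known one may argue that $n\mapsto\sum_{d\mid n}\mu(d)$ is multiplicative, being a Dirichlet convolution of multiplicative functions, which reduces the computation to prime powers $p^e$, where the sum collapses to $\mu(1)+\mu(p) = 0$ for $e\ge 1$; multiplying over the primes dividing $n$ then gives the claim.

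There is no genuine difficulty here — the statement is entirely elementary — but the one point worth stating carefully is the role of the coprimality hypothesis in part (2): it is exactly what guarantees that the prime factorizations of $n_1$ and $n_2$ do not overlap, and multiplicativity of $\mu$ genuinely fails without it (e.g.\ $\mu(4)=0 \neq 1 = \mu(2)\mu(2)$). In writing up I would make sure the degenerate inputs $n=1$ and $n_i=1$ are handled explicitly, as indicated above.
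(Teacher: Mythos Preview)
Your proof is correct and entirely standard. Note, however, that the paper does not actually prove this lemma: it simply states it with a citation to a textbook (Ireland and Rosen), so there is no ``paper's own proof'' to compare against. Your write-up would serve perfectly well as a self-contained replacement for that citation.
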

\begin{lem}\label{lemrem}
Let $f(q) \in \mathbb{C}[q]$ and $n \ge 1$. Assume that as a function of $\omega \in \mu_n$, $f(\omega)$ depends only on the order of $\omega$. Then the remainder of $f(q)$ upon division by $[n]_q$ is 
\begin{equation*}
G_{g,n}(q)
\end{equation*}
for
\begin{equation*}
g\colon D_n \to \mathbb{C}, \quad g(d) = f(\omega_n^d).
\end{equation*}
\end{lem}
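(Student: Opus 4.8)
The plan is to show that $G_{g,n}(q)$ has the two properties that characterise the remainder of $f(q)$ upon division by the monic polynomial $[n]_q$: namely $\deg G_{g,n} < \deg [n]_q = n-1$, and $f(q) \equiv G_{g,n}(q) \bmod [n]_q$. Since $[n]_q = (q^n-1)/(q-1)$ is monic with the $n-1$ \emph{distinct} roots $\mu_n \setminus \{1\}$, the congruence reduces to checking $G_{g,n}(\omega) = f(\omega)$ for every $\omega \in \mu_n \setminus \{1\}$: once this holds, $[n]_q \mid f(q) - G_{g,n}(q)$, and combined with the degree bound this forces $G_{g,n}$ to be the remainder. (The case $n=1$ is vacuous: $[1]_q = 1$, $D_1 = \emptyset$, and both sides vanish.) The degree bound is immediate, since each summand of $G_{g,n}$ is a scalar multiple of $[n]_q/[n/d]_q = (q^n-1)/(q^{n/d}-1) = \sum_{j=0}^{d-1} q^{jn/d}$, a polynomial of degree $n - n/d$; as $d$ is a proper divisor of $n$, $n/d$ is at least the least prime factor $p \ge 2$ of $n$, so $n - n/d \le n - 2 < n-1$.

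For the interpolation identity I would fix $\omega \in \mu_n \setminus \{1\}$, put $e = \mathrm{ord}(\omega)$ (so $e \mid n$, $e > 1$), and evaluate term by term, reading $[n]_q/[n/d]_q$ as the polynomial $[d]_{q^{n/d}}$. At $q = \omega$ one has $(\omega^{n/d})^d = \omega^n = 1$, so $[d]_{\omega^{n/d}} = 0$ unless $\omega^{n/d} = 1$, i.e. unless $e \mid n/d$, i.e. unless $d \mid n/e$; and when $d \mid n/e$ we get $\omega^{n/d} = 1$, hence $[d]_{\omega^{n/d}} = d$. Thus only the divisors $d \mid n/e$ survive (all proper divisors of $n$ because $e > 1$), and
\[
G_{g,n}(\omega) = \sum_{d \mid n/e} d\cdot \frac{1}{d}\sum_{c \mid d} \mu(d/c)\,g(c) = \sum_{c \mid n/e} g(c) \sum_{c \mid d \mid n/e} \mu(d/c) = g(n/e),
\]
where the last step swaps the order of summation and uses the divisor-sum formula $\sum_{d' \mid m}\mu(d') = \mathbf 1_{m=1}$ from Lemma \ref{lem:mob}(1). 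Finally $g(n/e) = f(\omega_n^{n/e})$, and $\omega_n^{n/e}$ has order exactly $e = \mathrm{ord}(\omega)$, so the hypothesis that $f$ restricted to $\mu_n$ depends only on the order gives $g(n/e) = f(\omega)$, completing the verification.

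The computation is essentially routine Möbius-inversion bookkeeping; the one point that needs care — and the only place the argument could go wrong if handled carelessly — is that the fractions $[n]_q/[n/d]_q$ must be interpreted as genuine polynomials (equivalently as $[d]_{q^{n/d}}$) \emph{before} specialising at a root of unity. Indeed, for the surviving indices $d \mid n/e$ both $[n]_\omega$ and $[n/d]_\omega$ vanish, so a naive ``$0/0$'' would be meaningless; working with $[d]_{q^{n/d}}$ throughout circumvents this and is exactly the form in which $G_{g,n}$ is defined.
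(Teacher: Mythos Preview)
Your proof is correct and follows essentially the same route as the paper's: bound the degree of $G_{g,n}$ below $n-1$, evaluate $[n]_q/[n/d]_q$ at $\omega \in \mu_n \setminus \{1\}$ to see only the divisors $d \mid n/\mathrm{ord}(\omega)$ survive with value $d$, and then apply M\"obius inversion after swapping the order of summation. The paper's argument is slightly terser (it does not single out the $n=1$ case or stress the $0/0$ issue), but the substance is identical.
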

\begin{proof}
The degree of $G_{g,n}(q)$ is less than $\deg [n]_q$, since if $d \in D_n$ then $\deg \frac{[n]_q}{[n/d]_q} = n-\frac{n}{d} < n-1 = \deg [n]_q$. Let $\omega \in \mu_n \setminus \{1\}$. We have, for any $d$ dividing $n$, 
\begin{equation*}
\frac{[n]_q}{[n/d]_q} \Big|_{q = \omega} = \begin{cases} d & \text{if }d \mid \frac{n}{\mathrm{ord}(\omega)}, \\ 0 & \text{otherwise.}\end{cases}
\end{equation*}
Thus
\begin{equation}\label{eq:romega}
G_{g,n}(\omega) = \sum_{d \mid \frac{n}{\mathrm{ord}(\omega)}} \sum_{e \mid d} \mu(\frac{d}{e})f(\omega_n^e).
\end{equation}
Changing the order of summation in \eqref{eq:romega}, we obtain
\begin{equation*}
G_{g,n}(\omega) = \sum_{e \mid \frac{n}{\mathrm{ord}(\omega)}} f(\omega_n^e) \sum_{d: e \mid d \mid \frac{n}{\mathrm{ord}(\omega)}}  \mu(\frac{d}{e})= \sum_{e \mid \frac{n}{\mathrm{ord}(\omega)}} f(\omega_n^e) \sum_{d': d' \mid \frac{n}{\mathrm{ord}(\omega)e}}  \mu(d'),
\end{equation*}
which equals $f(\omega_n^{\frac{n}{\mathrm{ord}(\omega)}})=f(\omega)$ by the first part of Lemma~\ref{lem:mob}. This implies that $f(q)-G_{q,n}(q)$ is divisible by $[n]_q$, as needed.
\end{proof}

\subsection{Proof of Proposition~\ref{propcharweak}}
The equivalence of the second and the third conditions in Proposition~\ref{propcharweak} follows from a general observation: a polynomial $F(q) \in \mathbb{C}[q]$ is divisible by $\Phi_{k}(q)$ if and only if $F(\omega)=0$ for any primitive root of unity $\omega$ of order $k$. We turn to prove the equivalence of the first and the second conditions.

$\Leftarrow$:  Assume that $\{a_n(q)\}_{n \ge 1} \subseteq \mathbb{Z}[q]$ satisfies \eqref{charweak}, that is,
\begin{equation}\label{charweak3}
a_{nm}(\omega_n^i) =a_{m(n,i)}(1).
\end{equation}
for all $n \in \mathbb{N}_S$ and $m,i\ge 1$. We establish \eqref{qweakneckcond}, which may be stated as follows:
\begin{equation}\label{qweakneckcond2}
\sum_{d \mid n} \mu(d) a_{nm/d}(\omega_n^{id})=0.
\end{equation}
for all $n \in \mathbb{N}_{S}$ and $m,i \ge 1$ with $n \nmid i$. We simplify \eqref{qweakneckcond2} using \eqref{charweak3} as follows:
\begin{equation}\label{eq:sumsinduct}
\sum_{d \mid n} \mu(d) a_{nm/d}(\omega_n^{id}) = \sum_{d \mid n} \mu(d) a_{\frac{n}{d} m}(\omega_{n/d}^i)= \sum_{d \mid n} \mu(d) a_{(\frac{n}{d},i) m}(1).
\end{equation}
If $a_f(1)$ appears in the right-hand side of \eqref{eq:sumsinduct}, then $f=f'm$ for some $f' \mid (n,i)$. For any $f' \mid (n,i)$, the term $a_{f'm}(1)$ appears in the right-hand side of \eqref{eq:sumsinduct} with coefficient
\begin{equation}\label{eq:coeffmob}
\sum_{\substack{d \mid n\\ (\frac{n}{d},i)=f'}} \mu(d) = \sum_{\substack{d \mid \frac{n}{f'}\\ (\frac{n}{df'},\frac{i}{f'})=1}} \mu(d).
\end{equation}
Let $g$ be the largest divisor of $\frac{n}{f'}$ which is divisible only by primes dividing $\frac{i}{f'}$. The condition $(\frac{n}{df'},\frac{i}{f'})=1$ is equivalent to $g \mid d$. Also let $\tilde{(\frac{n}{gf'})}$ denote the largest factor of $\frac{n}{gf'}$ coprime to $g$. Using Lemma~\ref{lem:mob}, the sum in the right-hand side of \eqref{eq:coeffmob} is
\begin{equation}\label{eq:mobuse}
\sum_{d' \mid \frac{n}{gf'}} \mu(gd') = \mu(g) \sum_{d' \mid \tilde{(\frac{n}{gf'})}}\mu(d') = \mu(g) 1_{\tilde{(\frac{n}{gf'})} = 1}.
\end{equation}
We now explain why the sum in \eqref{eq:coeffmob} is necessarily $0$. Otherwise, by \eqref{eq:mobuse}, $g$ must be squarefree and every prime factor of $\frac{n}{gf'}$ must be a factor of $g$. In particular, every prime factor of $\frac{n}{f'}$ divides $g$. Combined with the fact $g$ is squarefree and the definition of $g$, it follows that $g=\frac{n}{f'}$. Again, by the definition of $g$, every prime factor of the squarefree number $g=\frac{n}{f'}$ divides $\frac{i}{f'}$, and thus $n$ divides $i$, a contradiction. Thus, the sum in \eqref{eq:sumsinduct} is also $0$, as needed.

$\Rightarrow$:  Assume that $\{a_n(q)\}_{n \ge 1} \subseteq \mathbb{Z}[q]$ satisfies the $q$-Gauss congruences with respect to $S$. We show by induction on $n \in \mathbb{N}_S$ that \eqref{qweakneckcond2} implies \eqref{charweak3}. For $n=1$, \eqref{charweak3} is a tautology. We assume that \eqref{charweak3} holds for all $n\in \mathbb{N}_S$ smaller than $k\in \mathbb{N}_S$, and prove it for $n=k$. If $i$ is divisible by $k$, there is nothing to prove. Otherwise, if $k$ does not divide $i$, we have from \eqref{qweakneckcond2} that
\begin{equation}\label{qneckwithroot}
\sum_{d \mid k}\mu(d) a_{km/d}(\omega^{id}_k) =0
\end{equation}
whenever $k \nmid i$. The induction hypothesis tells us that for any $d \neq 1$ dividing $k$, 
\begin{equation}\label{inductell}
a_{km/d}(\omega^{id}_{k}) =a_{km/d}(\omega^i_{k/d})= a_{m(k/d,i)}(1).
\end{equation}
From \eqref{qneckwithroot} and \eqref{inductell} we obtain
\begin{equation}\label{qneckwithroot2}
a_{km}(\omega_k^i) + \sum_{d \mid k,\, d \neq 1}\mu(d) a_{(k/d,i)}(1) =0.
\end{equation}
We need to prove that $a_{km}(\omega_k^i)=a_{m(k,i)}(1)$, which, using \eqref{qneckwithroot2}, becomes the following equivalent condition:
\begin{equation*}
\sum_{d \mid k}\mu(d) a_{(k/d,i)m}(1) =0,
\end{equation*}
which was established in the other direction of the proof by showing that the coefficient of $a_{f'm}(1)$ (where $f'  \mid (k,i)$) is $0$, so we are done. \qed
\subsection{Proof of Lemma~\ref{lem:equivsq}}
The first part of the lemma follows immediately by choosing $n=p^k$ for $p \in S$ in \eqref{qweakneckcond}. We turn to the proof of the second part of the lemma.

$\Rightarrow$: Suppose that $\{ a_n(q)\}_{n \ge 1}$ satisfies the $q$-Gauss congruences with respect to $\mathbb{P}$. Then by choosing $m=1$ in \eqref{qweakneckcond} we see that $\{ a_n(q)\}_{n \ge 1}$ satisfies the $q$-Gauss congruences, as needed. 

$\Leftarrow$: Suppose that $\{ a_n(q) \}_{n \ge 1}$ satisfies the $q$-Gauss congruences. By Proposition~\ref{propcharweak}, it suffices to prove that 
\begin{equation*}
a_n(\omega_n^i)=a_{(n,i)}(1).
\end{equation*}
for all $n,i \ge 1$. In other words, we need to deduce \eqref{qweakneckcond2} from \eqref{charweak3}, but with $m$ fixed and equal to $1$ (and $S=\mathbb{P}$). In Proposition~\ref{propcharweak}, it is established that \eqref{charweak3} implies \eqref{qweakneckcond2}, and following the proof we see that in fact $m$ can be fixed during it, so we are done. \qed
\subsection{Proof of Proposition~\ref{proprem}}
According to Proposition~\ref{propcharweak}, we may apply Lemma~\ref{lemrem} with $f(q)=a_{nm}(q)$, which establishes the proposition since $f(\omega_{n}^e)=a_{nm}(\omega_{n}^{e}) = a_{nm/(n/e)}(1)=a_{em}(1)$ if $e \mid n$. \qed
\section{Examples}
To verify our examples we need two results. The first is a standard result  \cite[Ch.~3,~Ex.~45(b)]{stanley1997} (cf. \cite{slavin2008}).
\begin{lem}\label{lemroots}
Let $n,k,d$ be non-negative integers. We have
\begin{equation*}
{n \brack k}_{\omega_n^d} = \begin{cases} \binom{(n,d)}{(n,d)k/n} & \mbox{if $n \mid dk$,} \\ 0 & \mbox{otherwise.} \end{cases}
\end{equation*}
\end{lem}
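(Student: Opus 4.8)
The plan is to prove Lemma~\ref{lemroots}, i.e.\ the formula
\begin{equation*}
{n \brack k}_{\omega_n^d} = \begin{cases} \binom{(n,d)}{(n,d)k/n} & \text{if $n \mid dk$,} \\ 0 & \text{otherwise.} \end{cases}
\end{equation*}
The most conceptual route is via the $q$-Binomial Theorem~\eqref{qbinom}. Set $e=(n,d)$ and $N=n/e$, and write $\zeta=\omega_n^d$, a primitive $N$-th root of unity. Evaluating the product side of \eqref{qbinom} at $q=\zeta$ and $t$ a free variable gives $\prod_{i=0}^{n-1}(1+t\zeta^i)$. Since $\zeta$ has order $N$ and $n=eN$, the exponents $\zeta^i$ for $0\le i\le n-1$ run through the $N$-th roots of unity each exactly $e$ times, so this product equals $\bigl(\prod_{j=0}^{N-1}(1+t\zeta^j)\bigr)^e = (1-(-t)^N)^e = (1+(-1)^{N+1}t^N)^e$, using the standard factorization $\prod_{j=0}^{N-1}(X-\zeta^j)=X^N-1$. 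Expanding by the ordinary binomial theorem, the coefficient of $t^k$ is $\binom{e}{k/N}(-1)^{(N+1)k/N}$ when $N\mid k$ and $0$ otherwise. Comparing with the right-hand side $\sum_{k}{n\brack k}_\zeta t^k \zeta^{\binom{k}{2}}$ of \eqref{qbinom}, I read off that ${n\brack k}_\zeta$ vanishes unless $N\mid k$, i.e.\ unless $n\mid dk$ (since $N\mid k \iff n\mid ek \iff n\mid dk$, as $(n,d)=e$); and when $N\mid k$,
\begin{equation*}
{n\brack k}_\zeta = \binom{e}{k/N}(-1)^{(N+1)k/N}\zeta^{-\binom{k}{2}}.
\end{equation*}

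It then remains to check that the sign-and-root-of-unity factor $(-1)^{(N+1)k/N}\zeta^{-\binom{k}{2}}$ equals $1$ whenever $N\mid k$. Write $k=\ell N$ with $\ell=ek/n=(n,d)k/n$. Then $\binom{k}{2}=\binom{\ell N}{2}=\frac{\ell N(\ell N-1)}{2}$, and since $\zeta^N=1$ we have $\zeta^{-\binom{k}{2}}=\zeta^{-\ell N(\ell N-1)/2}$; reducing the exponent modulo $N$, $\ell N(\ell N-1)/2 \equiv -\ell N/2 \pmod N$ when $N$ is even (giving $\zeta^{\ell N/2}=((-1))^{\ell}$ since $\zeta^{N/2}$ is a square root of unity, in fact $\pm1$, and one checks it is $-1$) and $\equiv 0\pmod N$ when $N$ is odd. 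A short parity bookkeeping then shows the two factors cancel: when $N$ is odd, $(-1)^{(N+1)\ell}=1$ and $\zeta^{-\binom{k}{2}}$ reduces to a full power of $\zeta^N=1$; when $N$ is even, $(N+1)\ell \equiv \ell$ and $\zeta^{-\binom{k}{2}}$ contributes $(-1)^\ell$, and the product is $(-1)^\ell(-1)^\ell=1$. This is the only slightly fiddly part of the argument, but it is entirely elementary.

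An alternative, perhaps cleaner, packaging avoids signs altogether: in \eqref{qbinom} replace $t$ by $-t$ to get $\prod_{i=0}^{n-1}(1-tq^i)=\sum_k{n\brack k}_q(-1)^kq^{\binom{k}{2}}t^k$, evaluate at $q=\zeta$ to obtain $(1-t^N)^e=\sum_j\binom{e}{j}(-1)^jt^{jN}$, and match coefficients of $t^k$. This gives ${n\brack k}_\zeta(-1)^k\zeta^{\binom{k}{2}}=0$ unless $N\mid k$, and $=\binom{e}{k/N}(-1)^{k/N}$ when $k=(k/N)N$; then one only needs $(-1)^k\zeta^{\binom{k}{2}}=(-1)^{k/N}$ for $N\mid k$, which is the same parity check as above but slightly shorter. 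I would present this version.

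I expect the main (really the only) obstacle to be the final verification that the root-of-unity and sign factors collapse to $1$; everything else is a direct application of \eqref{qbinom} together with the elementary fact that $\zeta^i$ ranges $e$-fold over the $N$-th roots of unity. For completeness I would also note the degenerate cases: if $d=0$ then $\zeta=1$, $e=n$, $N=1$, and the formula reads ${n\brack k}_1=\binom{n}{k}$, consistent with the value at $q=1$; and if $k=0$ or $k=n$ the claim is the trivial ${n\brack 0}_\zeta={n\brack n}_\zeta=1$, which matches since then $(n,d)k/n\in\{0,(n,d)\}$ and $\binom{(n,d)}{0}=\binom{(n,d)}{(n,d)}=1$.
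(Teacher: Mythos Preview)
Your proposal is correct and follows essentially the same approach as the paper: substitute $q=\omega_n^d$ into the $q$-Binomial Theorem \eqref{qbinom}, use that $\prod_{i=0}^{n-1}(1+t\omega_n^{di})=(1-(-t)^{n/(n,d)})^{(n,d)}$, and compare coefficients of $t^k$. The paper simply states that comparing coefficients ``concludes the proof,'' whereas you spell out the parity check showing $(-1)^{\ell(N+1)}\zeta^{-\binom{k}{2}}=1$; this extra detail is fine and your argument for it is sound.
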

\begin{proof}
Plugging $q = \omega_n^d$ in \eqref{qbinom}, we obtain
\begin{equation}\label{binomqsub}
(1-(-t)^{n/(n,d)})^{(n,d)} = \sum_{k=0}^{n} {n \brack k}_{\omega_n^d} t^k \omega_n^{d\binom{k}{2}}.
\end{equation}
Comparing the coefficients of $t^k$ on both sides of \eqref{binomqsub}, we conclude the proof of the lemma.
\end{proof}
We also need the following lemma.
\begin{lem}\label{lemmatricessim}
Let $n \ge 1$ and let $\omega$ be a primitive root of unity of order $n$. Let 
\begin{equation*}
A_{\omega}(t) = \begin{bmatrix} 1  & \omega^{n-1} \\ t & 0 \end{bmatrix} \begin{bmatrix} 1  & \omega^{n-2} \\ t & 0 \end{bmatrix} \cdots \begin{bmatrix} 1  & 1 \\ t & 0 \end{bmatrix} \in \mathrm{Mat}_2(\mathbb{Z}[\omega][t])
\end{equation*}
and 
\begin{equation*}
A(t) = \begin{bmatrix} 1  & t \\ 1 & 0 \end{bmatrix} \in \mathrm{Mat}_2(\mathbb{Z}[t]).
\end{equation*}
Then $A_{\omega}(t)$, $A(t^{n})$ have the same characteristic polynomial.
\end{lem}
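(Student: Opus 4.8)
The plan is to use that a $2\times 2$ matrix $M$ over a commutative ring has characteristic polynomial $\lambda^2-(\operatorname{tr}M)\lambda+\det M$, so two such matrices share a characteristic polynomial exactly when they share both trace and determinant. Hence it suffices to prove $\det A_\omega(t)=-t^n$ and $\operatorname{tr}A_\omega(t)=1$, these being the determinant and trace of $A(t^n)=\begin{bmatrix}1&t^n\\1&0\end{bmatrix}$. The determinant is immediate: writing $M_j=\begin{bmatrix}1&\omega^j\\t&0\end{bmatrix}$, so that $A_\omega(t)=M_{n-1}M_{n-2}\cdots M_0$, we have $\det M_j=-t\omega^j$, hence $\det A_\omega(t)=(-t)^n\prod_{j=0}^{n-1}\omega^j$. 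Since $\omega$ is a primitive $n$-th root of unity, $\{\omega^j:0\le j\le n-1\}$ is the full set of $n$-th roots of unity, whose product is $(-1)^{n-1}$ (the constant term of $x^n-1$ up to sign); therefore $\det A_\omega(t)=(-t)^n(-1)^{n-1}=-t^n$.

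For the trace, the crucial observation is that the polynomial $T(t):=\operatorname{tr}A_\omega(t)\in\mathbb{Z}[\omega][t]$ is invariant under the substitution $t\mapsto\omega t$. To see this, set $D=\operatorname{diag}(1,\omega)$; a direct computation gives $DM_jD^{-1}=\begin{bmatrix}1&\omega^{j-1}\\\omega t&0\end{bmatrix}$, i.e.\ conjugation by $D$ turns $M_j$ into ``$M_{j-1}$ evaluated at $\omega t$'' (reading exponents of $\omega$ modulo $n$, so this is meaningful when $j=0$). Conjugating the whole product and using $\omega^{-1}=\omega^{n-1}$, one finds that $DA_\omega(t)D^{-1}$ is precisely the defining product of $A_\omega(\omega t)$ with its leftmost factor moved cyclically to the rightmost position. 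Since $\operatorname{tr}(FG)=\operatorname{tr}(GF)$, such a cyclic rotation preserves the trace, so $T(t)=\operatorname{tr}(DA_\omega(t)D^{-1})=T(\omega t)$.

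Writing $T(t)=\sum_k c_k t^k$, the identity $T(t)=T(\omega t)$ forces $c_k(\omega^k-1)=0$, so $c_k=0$ for every $k$ with $\omega^k\ne1$. On the other hand, a short induction shows the diagonal entries of $A_\omega(t)$, and hence $T(t)$, are polynomials in $t$ of degree at most $\lfloor n/2\rfloor<n$ (the $(1,1)$ and $(2,2)$ entries of $M_{m-1}\cdots M_0$ satisfy Schur--Fibonacci-type recursions $x_m=x_{m-1}+\omega^{m-1}t\,x_{m-2}$, in which a power of $t$ accrues at most every second step). As $\omega$ is primitive of order $n$, we have $\omega^k\ne1$ for $1\le k<n$, so $c_1=\cdots=c_{\lfloor n/2\rfloor}=0$ and $T(t)=c_0=T(0)$. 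Finally $A_\omega(0)=\prod_{j=n-1}^{0}\begin{bmatrix}1&\omega^j\\0&0\end{bmatrix}=\begin{bmatrix}1&1\\0&0\end{bmatrix}$, because each of these rank-one factors absorbs the one immediately to its right; hence $T(0)=\operatorname{tr}A_\omega(0)=1$, and $\operatorname{tr}A_\omega(t)=1$, as needed.

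The only step requiring a genuine idea is spotting the $t\mapsto\omega t$ symmetry of $\operatorname{tr}A_\omega(t)$; the determinant computation, the $\lfloor n/2\rfloor$ degree bound, and the evaluation at $t=0$ are all routine. It is worth remarking that setting $t=1$ in $\operatorname{tr}A_\omega(t)=1$ recovers Pan's congruence \eqref{eq:panen}, so Lemma~\ref{lemmatricessim} may be viewed as a one-parameter refinement of it.
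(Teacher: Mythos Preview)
Your proof is correct, and both your argument and the paper's hinge on the same conjugation-plus-cyclic-trace trick, but you deploy it differently and thereby avoid a dependency that the paper incurs. The paper evaluates $P(t)=\operatorname{tr}A_\omega(t)$ at each $\omega_2\in\mu_n$, conjugates by $\operatorname{Diag}(1,\omega_2)$ to obtain $P(\omega_2)=P(1)$, and then invokes Pan's congruence \eqref{eq:panen} as an external input to conclude $P(1)=1$; together with $P(0)=1$ and the crude bound $\deg P\le n$, this gives $n+1$ coinciding values and forces $P\equiv 1$. You instead keep $t$ generic, conjugate by $\operatorname{Diag}(1,\omega)$ once to obtain the functional equation $T(t)=T(\omega t)$, and pair this with the sharper bound $\deg T\le\lfloor n/2\rfloor$ (from the Fibonacci-type recursion) to kill all nonconstant coefficients directly; then $T(0)=1$ finishes. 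The payoff is that your argument is self-contained---it does not cite Pan's result but recovers it at $t=1$---so Lemma~\ref{lemmatricessim} becomes logically independent of \eqref{eq:panen} rather than a consequence of it.
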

\begin{proof}
The characteristic polynomial of $A(t^n)$ is $X^2 - X-t^n$, so it suffices to show that
\begin{equation*}
\det(A_{\omega}(t)) = -t^n, \quad \mathrm{Tr}(A_{\omega}(t)) = 1.
\end{equation*}
By multiplicativity of the determinant, we have 
\begin{equation*}
\det(A_{\omega}(t)) = \prod_{i=0}^{n-1} (-\omega^i t) = t^n (-1)^n \omega^{\binom{n}{2}} = -t^n.
\end{equation*}
Let $P(t) = \mathrm{Tr}(A_{\omega}(t))$. We have
\begin{equation}\label{pat0}
P(0) = \mathrm{Tr}\left( \begin{bmatrix} 1  & \omega^{n-1} \\ 0 & 0 \end{bmatrix} \begin{bmatrix} 1  & \omega^{n-2} \\ 0 & 0 \end{bmatrix}  \cdots \begin{bmatrix} 1  & 1 \\ 0 & 0 \end{bmatrix}     \right) = \mathrm{Tr}\left( \begin{bmatrix} 1  & 1 \\ 0 & 0 \end{bmatrix}   \right)= 1. 
\end{equation}
Let $\omega_2 \in \mu_n$. By conjugating $A_{\omega}(\omega_2)$ with $\mathrm{Diag}(1,\omega_2)$ and using the property $\mathrm{Tr}(XY)=\mathrm{Tr}(YX)$, we see that
\begin{equation}\label{patom}
\begin{split}
P(\omega_2) &= \mathrm{Tr} \left(\begin{bmatrix} 1  & \omega^{n-1}\omega_2 \\ 1 & 0 \end{bmatrix} \begin{bmatrix} 1  & \omega^{n-2}\omega_2 \\ 1 & 0 \end{bmatrix} \cdots \begin{bmatrix} 1  & \omega_2 \\ 1 & 0 \end{bmatrix}\right) \\
&=  \mathrm{Tr} \left(\begin{bmatrix} 1  & \omega^{n-1} \\ 1 & 0 \end{bmatrix} \begin{bmatrix} 1  & \omega^{n-2} \\ 1 & 0 \end{bmatrix} \cdots \begin{bmatrix} 1  & 1 \\ 1 & 0 \end{bmatrix}\right) = P(1).
\end{split}
\end{equation}
Plugging $q = \omega$ in \eqref{eq:panen}, we see that $P(1)=1$. From \eqref{pat0} and \eqref{patom}, the polynomial $P$ is of degree $\le n$ and assumes the value $1$ $n+1$ times. Thus, $P$ is the constant polynomial $1$, as needed.
\end{proof}
\subsection{Simple examples}\label{subsecweakthm}
Here we verify that the examples given in \S\ref{secrel} satisfy the $q$-Gauss congruences. We start with $a_n(q) = {an \brack bn}_q$. By Corollary~\ref{cor:sp}, it suffices to show that
\begin{equation}\label{congprimesstep}
{an \brack  bn}_{\omega_n^i} = \binom{a(n,i)}{b(n,i)}.
\end{equation}
By Lemma~\ref{lemroots}, the left-hand side of \eqref{congprimesstep} is equal to ${an \brack  bn}_{\omega_{an}^{ai}} = \binom{(an,ai)}{(an,ai)b/a} = \binom{a(n,i)}{b(n,i)}$, as needed. We now consider $b_n(q) = {an-1 \brack bn}_q$, for which we have to show that
\begin{equation*}
{an-1 \brack  bn}_{\omega_n^i} = \binom{a(n,i)-1}{ b(n,i)}.
\end{equation*}
This equality can be deduced from \eqref{congprimesstep} since
\begin{equation*}
{am-1 \brack bm}_{q}  = {am \brack bm}_q  \frac{[(a-b)m]_q}{[am]_q}
\end{equation*}
and if $\omega^m =1$ then
\begin{equation*}
\lim_{q \to \omega }\frac{[(a-b)m]_q}{[am]_q} = \frac{a-b}{a}.
\end{equation*}
We continue with $c_n(q) = [t^{bn}]\prod_{i=0}^{n-1}(1-tq^i)^{a}$. By Corollary~\ref{cor:sp}, we need to prove that $c_n(\omega_n^k) = c_{(n,k)}(1)$, that is,
\begin{equation}\label{compcoeffs}
[t^{bn}]\prod_{i=0}^{n-1} (1-t \omega_n^{ki})^a = [t^{b(n,k)}](1-t)^{a(n,k)}.
\end{equation}
The left-hand side of \eqref{compcoeffs} may be evaluated as follows:
\begin{equation*}
\begin{split}
[t^{bn}]\prod_{i=0}^{n-1} (1-t \omega_n^{ki})^a &= [t^{bn}] \Big(\prod_{i=0}^{\frac{n}{(n,k)}-1} (1-t \omega_{n/(n,k)}^{ik/(n,k)}) \Big)^{a(n,k)}\\
&=  [t^{bn}] ( 1-t^{n/(n,k)})^{a(n,k)} \\
&=  [t^{b(n,k)}] (1-t)^{a(n,k)},
\end{split}
\end{equation*}
as needed.
\subsection{Proof of Theorem~\ref{propweakbinom}}
We prove both parts using Corollary~\ref{cor:sp}. We start with $d_n(q) = \sum_{i=0}^{\lfloor \frac{n}{2} \rfloor} q^{i(i+b)}{n \brack i}_q {n-i \brack i}_q$. We need to prove that $d_n(\omega_n^k) = d_{(n,k)}(1)$. By Lemma~\ref{lemroots},
\begin{equation*}
\begin{split}
d_n(\omega_n^k)&=\sum_{\substack{0 \le i \le \lfloor \frac{n}{2} \rfloor \\ n \mid ik}} \omega_n^{ki(i+b)} \binom{(n,k)}{(n,k)i/n} {n-i \brack i}_{\omega_n^k}\\
&=\sum_{\substack{0 \le i \le \lfloor \frac{n}{2} \rfloor \\ \frac{n}{(n,k)} \mid i}} \binom{(n,k)}{(n,k)i/n} {n-i \brack i}_{\omega_{n-i}^{k(n-i)/n}}\\
&=\sum_{\substack{0 \le i \le \lfloor \frac{n}{2} \rfloor \\ \frac{n}{(n,k)} \mid i}} \binom{(n,k)}{(n,k)i/n} \binom{(n-i,k(n-i)/n)}{(n-i,k(n-i)/n)i/(n-i)}.
\end{split}
\end{equation*}
Since $(n-i,k(n-i)/n)=(\frac{n-i}{n/(n,k)}\frac{n}{(n,k)},\frac{n-i}{n/(n,k)}\frac{k}{(n,k)})=\frac{n-i}{n/(n,k)}$, we may simplify the last sum as
\begin{equation*}
\begin{split}
d_n(\omega_n^k)&=\sum_{\substack{0 \le i \le \lfloor \frac{n}{2} \rfloor \\ \frac{n}{(n,k)} \mid i}} \binom{(n,k)}{(n,k)i/n} \binom{(n,k)(n-i)/n}{(n,k)i/n}\\
&=\sum_{\substack{0 \le i' \le \lfloor \frac{n}{2} \rfloor/(n/(n,k)) }} \binom{(n,k)}{i'} \binom{(n,k)-i'}{i'} =d_{(n,k)}(1).
\end{split}
\end{equation*}
We now prove the theorem for $e_n(q)$. Let 
\begin{equation*}
B_n(q) = A(q^{n-1}) A(q^{n-2}) \cdots A(1).
\end{equation*}
Since $i \mapsto (\omega_n^{k})^i=\omega_{n/(n,k)}^{ik/(n,k)}$ has period $n/(n,k)$, we have
\begin{equation}\label{bnpoweriden}
B_n(\omega_n^k) = B_{n/(n,k)}^{(n,k)}(\omega_{n/(n,k)}^{k/(n,k)}).
\end{equation}
If $(a,b)=1$, then Lemma~\ref{lemmatricessim} with $t=1$, $\omega = \omega_a^b$ and $n=a$ implies that $B_a(\omega_a^b)$ and $B_1(1)$ have the same characteristic polynomial, and so
\begin{equation}\label{b1trace}
\mathrm{Tr}(B^j_a(\omega_a^b)) = \mathrm{Tr}(B^j_1(1))
\end{equation}
holds for all $j$ and $a,b$ with $(a,b)=1$. By Corollary~\ref{cor:sp}, we need to prove that $e_n(\omega_n^k) = e_{(n,k)}(1)$, that is,
\begin{equation}\label{twotrace}
\mathrm{Tr}(B_n(\omega_n^k)) = \mathrm{Tr}(B^{(n,k)}_1(1)).
\end{equation}
From \eqref{b1trace} with $a=n/(n,k)$, $b=k/(n,k)$ and $j=(n,k)$, we obtain that the right-hand side of \eqref{twotrace} is $\mathrm{Tr}(B_{n/(n,k)}^{(n,k)}(\omega_{n/(n,k)}^{k/(n,k)}))$, which in turn equals the left-hand side of \eqref{twotrace} according to  \eqref{bnpoweriden}. \qed
\section{Proof of Theorem~\ref{thm:propcsp}}\label{seccsp}
We begin with the triple $(B_{n,k},\mathbb{Z}/n\mathbb{Z},{n;b;q \brack k}_2)$. The polynomial ${m_1 \brack m_2}_q$ has non-negative coefficients for all $m_1 \ge m_2 \ge 0$ (as follows from \eqref{qbinom}, for instance), and so 
${n;b;q \brack k} =\sum_{i =0}^{\lfloor \frac{n-k}{2} \rfloor} q^{i(i+b)} {n \brack i}_q {n-i \brack i+k}_q$ must also have non-negative coefficients. For any $0 \le i \le n$, the product $\binom{n}{i}\binom{n-i}{i+k}$ is the number of words in $B_{n,k}$ with $i$ $2$-s and $i+k$ $0$-s, and so
\begin{equation}\label{combtrin}
{n;b;1 \brack k} = \sum_{i \ge 0} \binom{n}{i} \binom{n-i}{i+k} = |B_{n,k}|.
\end{equation}
Given $g \in \mathbb{Z}/n\mathbb{Z}$, the set $B_{n,k}^g$ consists of elements of $B_{n,k}$ with period $(g,n)$, that is, of words of the form
\begin{equation*}
w^{\frac{n}{(g,n)}}= \underbrace{w \mid w \mid w \mid \cdots \mid w}_{n/(g,n)},
\end{equation*}
where $|$ denotes concatenation, and the length of $w$ is $(g,n)$. For $w^{\frac{n}{(g,n)}}$ to be in $B_{n,k}$, it is necessary and sufficient that $w \in B_{(g,n), k(g,n)/n}$ (in particular, $kg \equiv 0 \bmod n$). Thus,
\begin{equation}\label{sizeoffixtrin}
|B_{n,k}^g| = \begin{cases} B_{(g,n), k(g,n)/n} & \text{if }kg \equiv 0 \bmod n,\\ 0 & \text{otherwise.}\end{cases}
\end{equation}
To verify that $(B_{n,k},\mathbb{Z}/n\mathbb{Z},{n;b;q \brack k})$ exhibits the CSP, we need to prove that for all $g,g' \in \mathbb{Z}/n\mathbb{Z}$ with $\gcd(n,g)=\gcd(n,g')$, 
\begin{equation}\label{csptrin}
{n;b;\omega_n^{g'} \brack k} = |B_{n,k}^{g}|.
\end{equation}
By \eqref{combtrin} and \eqref{sizeoffixtrin}, the right-hand side of \eqref{csptrin} is ${(g,n);b;1 \brack k(g,n)/n}$ if $kg \equiv 0 \bmod n$, and $0$ otherwise. Thus, \eqref{csptrin} is equivalent to
\begin{equation*}
{n;b;\omega_n^{g'} \brack k} = \begin{cases} {(g,n);b;1 \brack k(g,n)/n} & \text{if }kg \equiv 0 \bmod n, \\ 0 & \text{otherwise.} \end{cases}
\end{equation*}
To prove this, we use Lemma~\ref{lemroots}, which implies that
\begin{equation}\label{compcoeffs2k}
{n;b;\omega_n^{g'} \brack k}=\sum_{\substack{0 \le i \le \lfloor \frac{n-k}{2} \rfloor \\ n/(g',n) \mid i}}  \binom{(g',n)}{i/(n/(g',n))} {n-i \brack i+k}_{\omega_{n-i}^{g'(n-i)/n}}.
\end{equation}
If $kg \neq 0 \bmod n$, then $kg' \neq 0 \bmod n$ also and Lemma~\ref{lemroots} implies that ${n-i \brack i+k}_{\omega_{n-i}^{g'(n-i)/n}} = 0$ whenever $n/(g',n) \mid i$ and so ${n;b;\omega_n^{g'} \brack k} =0$, as needed. Otherwise, Lemma~\ref{lemroots} tells us that ${n-i \brack i+k}_{\omega_{n-i}^{g'(n-i)/n}} = \binom{(g',n) - i/(n/(g',n))}{i/(n/(g',n)) + k(g',n)/n}$, and so the sum in \eqref{compcoeffs2k} is exactly ${(g',n);b;1 \brack k(g',n)/n}={(g,n);b;1 \brack k(g,n)/n}$, as needed.

We turn to the triple $(C_{n,k},\mathbb{Z}/n\mathbb{Z},e_{n,k}(q))$. The entries of $A(q^i,t)$ are polynomials in $q$ and $t$ with non-negative coefficients, and so $e_{n,k}(q)$ must also have non-negative coefficients. Set
\begin{equation*}
e_n(q,t) = \mathrm{Tr}(A(q^{n-1},t)A(q^{n-2},t) \cdots A(1,t)).
\end{equation*}
By definition, $e_{n,k}(q) = [t^k]e_n(q,t)$. Let $S_{n+1,k}$ be the set of words $w$ of length $n+1$ on letters $0,1$, with no consecutive $1$-s, and with $k$ indices $1 \le i \le n$ such that $w_{i}=1,w_{i+1}=0$. For all $w \in \cup_{k=0}^{n} S_{n+1,k}$, set
\begin{equation*}
W_3(w) = \sum_{\substack{1 \le a \le n: \\ w_a=0, w_{a+1}=1}} (n-a).
\end{equation*}
A direct inductive argument shows that for all $n \ge 1$ and $i,j \in \{0,1\}$, we have
\begin{equation}\label{eq:ijval}
(A(q^{n-1},t)A(q^{n-2},t) \cdots A(1,t))_{i,j} = \sum_{k=0}^{n} t^k \sum_{\substack{w \in S_{n+1,k} \\ w_1=i,w_{n+1}=j}} q^{W_3(w)}.
\end{equation}
Let $S^{'}_{n+1,k}$ be the subset of $S_{n+1,k}$ consisting of words that start and end with the same letter. Then \eqref{eq:ijval} implies that
\begin{equation}\label{eq:enqtviasnk}
e_n(q,t) = \sum_{k=0}^{n} t^k \sum_{w \in S^{'}_{n+1,k}}q^{W_3(w)}.
\end{equation}
By removing the first letter of each word in $S^{'}_{n+1,k}$, we obtain a set of the same size, namely $C_{n,k}$. Thus, \eqref{eq:enqtviasnk} implies that
\begin{equation}\label{enkviacnk}
e_{n,k}(q) =  \sum_{w \in S^{'}_{n+1,k}}q^{W_3(w)} = \sum_{w \in C_{n,k}} q^{W_1(w)}.
\end{equation}
In particular,
\begin{equation}\label{gncase}
e_{n,k}(1) = |C_{n,k}|.
\end{equation}
Given $g \in \mathbb{Z}/n\mathbb{Z}$, the set $C_{n,k}^g$ consists of elements of $C_{n,k}$ with period $(g,n)$, that is, of words of the form
\begin{equation*}
w^{\frac{n}{(g,n)}}= \underbrace{w \mid w \mid w \mid  \cdots \mid w}_{n/(g,n)},
\end{equation*}
where $|$ denotes concatenation, and the length of $w$ is $(g,n)$. For $w^{\frac{n}{(g,n)}}$ to be in $C_{n,k}$, it is necessary and sufficient that $kg \equiv 0 \bmod n$ and $w \in C_{(g,n),k(g,n)/n}$. Thus,
\begin{equation}\label{sizeoffixlucas}
|C^g_{n,k}| = \begin{cases} |C_{(g,n),k(g,n)/n}| & \text{if }kg \equiv 0 \bmod n, \\ 0 & \text{otherwise.} \end{cases}
\end{equation}
To verify that $(C_{n,k},\mathbb{Z}/n\mathbb{Z},e_{n,k}(q))$ exhibits the CSP, we need to prove that for all $g,g' \in \mathbb{Z}/n\mathbb{Z}$ with $\gcd(n,g)=\gcd(n,g')$,
\begin{equation}\label{cspluc} e_{n,k}(\omega^{g'}_n) = |C_{n,k}^g|.
\end{equation}
If we set $\omega = \omega_n^{g'}=\omega_{n/(g',n)}^{g'/(g',n)}$, then
\begin{equation}\label{eq:prodasatw}
A(q^{n-1},t)A(q^{n-2},t)\cdots A(1,t)\Big|_{q=\omega} = \left(A(\omega^{\frac{n}{(g',n)}-1},t)A(\omega^{\frac{n}{(g',n)}-2},t) \cdots A(1,t)\right)^{(g',n)}.
\end{equation}
Setting
\begin{equation*}
A_{\omega}(t) = A(\omega^{\frac{n}{(g',n)}-1},t)A(\omega^{\frac{n}{(g',n)}-2},t) \cdots A(1,t),
\end{equation*}
we obtain from \eqref{eq:prodasatw} that
\begin{equation}\label{enkdif}
e_{n,k}(\omega_n^{g'}) = [t^k] \mathrm{Tr}(A^{(g',n)}_{\omega}(t))= [t^k] \mathrm{Tr}(A^{(g,n)}_{\omega}(t)).
\end{equation}
By Lemma~\ref{lemmatricessim}, $A_{\omega}(t)$ and $A(t^{\frac{n}{(g,n)}},1)$ have the same characteristic polynomial. Thus, \eqref{enkdif} implies that
\begin{equation}\label{enkdif2}
e_{n,k}(\omega_n^{g'}) = [t^k] \mathrm{Tr}(A(t^{\frac{n}{(g,n)}},1)^{(g,n)}).
\end{equation}
If $kg \neq 0 \bmod n$, then \eqref{enkdif2} and \eqref{sizeoffixlucas} show that \eqref{cspluc} holds in this case. If $kg \equiv 0 \bmod n$, then \eqref{gncase}, \eqref{sizeoffixlucas} and \eqref{enkdif2} imply that
\begin{equation*}
e_{n,k}(\omega_n^{g'}) = [s^{\frac{k(g,n)}{n}}] \mathrm{Tr}(A(s,1)^{(g,n)}) =e_{(g,n),k(g,n)/n}(1) = |C^g_{n,k}|,
\end{equation*}
that is, \eqref{cspluc} again holds, as needed. \qed

\section{Criteria for supercongruences and $q$-Gauss congruences of order $d$}
\subsection{Auxiliary results}
We define the degree of the zero polynomial to be $-\infty$.
\begin{lem}\label{lem:derivn}
Let $n$ be a positive integer and let $\omega \in \mu_n \setminus \{ 1\}$.
\begin{enumerate}
\item Let $i \in \mathbb{Z}_{\ge 0}$. We have 
\begin{equation}\label{eq:qnderiv}
[n]^{(i)}_{\omega} = \frac{P_{n,i}(\omega)}{(\omega-1)^i\omega^i}
\end{equation}
for
\begin{equation*}
P_{n,i}(t) = i! \sum_{0,i-n \le j \le i-1} \binom{n}{i-j} (-t)^j (t-1)^{i-j-1} \in \mathbb{Z}[t].
\end{equation*}
\item Let $i,j \in \mathbb{Z}_{\ge 0}$. We have
\begin{equation*}
([n]_{\omega}^i)^{(j)} = \frac{R_{n,i,j}(\omega)}{(\omega-1)^j\omega^j}
\end{equation*}
for 
\begin{equation*}
R_{n,i,j}(t) = \sum_{a_1+\ldots+a_i = j} \binom{j}{a_1,\ldots,a_i} \prod_{1 \le k \le i} P_{n,a_k}(t) \in \mathbb{Z}[t].
\end{equation*}
Moreover, $\deg R_{n,i,j} \le j-i$ if $j \ge i$ and $R_{n,i,j}=0$ otherwise. Also, $R_{n,i,i} = i!n^i$.
\end{enumerate}
\end{lem}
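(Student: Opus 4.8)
\emph{Proof proposal.} The plan is to establish Part 1 by Taylor-expanding $[n]_q$ about $q=\omega$, and then to derive Part 2 from it using the multinomial Leibniz rule together with elementary degree bookkeeping.

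\emph{Part 1.} Fix $\omega\in\mu_n\setminus\{1\}$ and set $q=\omega+h$. Because $\omega^n=1$ we have $q^n-1=(1+h/\omega)^n-1=\sum_{k=1}^{n}\binom{n}{k}\omega^{-k}h^k$, whereas $q-1=(\omega-1)+h$ has a nonzero constant term since $\omega\neq1$. Therefore
\[
[n]_q=\frac{q^n-1}{q-1}=\frac{1}{\omega-1}\left(\sum_{k=1}^{n}\binom{n}{k}\omega^{-k}h^k\right)\sum_{\ell\ge0}(-1)^\ell\frac{h^\ell}{(\omega-1)^\ell},
\]
an identity of formal power series in $h$ (in fact of polynomials, as $[n]_q$ is). Comparing the coefficient of $h^i$, which equals $[n]^{(i)}_\omega/i!$, collecting the term with $\ell=i-k$, multiplying through by $(\omega-1)^i\omega^i$, and finally substituting $j=i-k$ (so that $j$ ranges over $\max(0,i-n)\le j\le i-1$ and $\omega^{i-k}(-1)^{i-k}=(-\omega)^j$) produces exactly $[n]^{(i)}_\omega(\omega-1)^i\omega^i=P_{n,i}(\omega)$. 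Integrality of $P_{n,i}$ is visible from the formula. Alternatively, one may induct on $i$ using the recursion $(q-1)[n]^{(i)}_q+i[n]^{(i-1)}_q=\binom{n}{i}i!\,q^{n-i}$, obtained by applying the Leibniz rule $i$ times to $(q-1)[n]_q=q^n-1$ and then evaluating at $q=\omega$.

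\emph{Part 2.} Apply the multinomial Leibniz rule to $[n]_q^i$, regarded as a product of $i$ equal factors, to get $([n]_q^i)^{(j)}=\sum_{a_1+\cdots+a_i=j}\binom{j}{a_1,\ldots,a_i}\prod_{k=1}^{i}[n]^{(a_k)}_q$. Evaluating at $q=\omega$ and substituting Part 1, the $\omega$-powers and $(\omega-1)$-powers from the $i$ factors multiply to $\omega^{j}$ and $(\omega-1)^j$ respectively since $\sum_k a_k=j$; this yields the asserted formula with $R_{n,i,j}$ exactly as defined, and $R_{n,i,j}\in\mathbb{Z}[t]$ because multinomial coefficients are integers. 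For the degree statement, read off from the explicit formula that $P_{n,0}=0$ (empty sum) and, for $a\ge1$, $\deg P_{n,a}\le a-1$, since every term of $P_{n,a}$ has degree $j+(a-j-1)=a-1$. Consequently any composition $(a_1,\ldots,a_i)$ of $j$ having a part equal to $0$ contributes nothing; when $j<i$ every composition of $j$ into $i$ parts has a zero part, so $R_{n,i,j}=0$, and when $j\ge i$ a composition with all parts positive gives $\deg\prod_k P_{n,a_k}\le\sum_k(a_k-1)=j-i$, whence $\deg R_{n,i,j}\le j-i$. Finally, for $j=i$ the unique composition of $i$ into $i$ positive parts is $(1,\ldots,1)$, with multinomial coefficient $i!$, and $P_{n,1}(t)=\binom{n}{1}=n$ (its single term corresponds to $j=0$), so $R_{n,i,i}=i!\,n^i$.

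I expect no conceptual difficulty here; the one place to be careful is the bookkeeping in Part 1 — turning the double sum over the pairs $(k,\ell)$ into the single sum over $j$ with the precise range $\max(0,i-n)\le j\le i-1$, and correctly clearing the powers of $\omega$ and $\omega-1$. Once Part 1 is in hand, Part 2 is a routine consequence.
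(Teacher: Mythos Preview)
Your proposal is correct and essentially follows the paper's approach. The only cosmetic difference is in Part~1: the paper applies the Leibniz rule directly to the product $(q^n-1)\cdot(q-1)^{-1}$ and then evaluates at $q=\omega$, whereas you multiply the Taylor series of the two factors about $q=\omega$ and read off the coefficient of $h^i$; these are two packagings of the same computation and yield the identical sum. Part~2 (multinomial Leibniz, the degree bound via $P_{n,0}=0$ and $\deg P_{n,a}\le a-1$, and the evaluation $R_{n,i,i}=i!\,n^i$) matches the paper almost verbatim, and your direct observation that $P_{n,1}(t)=n$ is a constant is slightly cleaner than the paper's evaluation at $\omega_n$.
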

\begin{proof}
To prove the first part of the lemma, recall the general Leibniz rule
\begin{equation*}
(f_{1}f_{2}\cdots f_{m_1})^{(m_2)}=\sum _{k_{1}+k_{2}+\ldots +k_{m_1}=m_2}\binom{m_2}{k_{1},k_{2},\ldots ,k_{m_1}}\prod_{1\leq j\leq m_1}f_{j}^{(k_{j})}.
\end{equation*}
Applying this rule with $f_1 = q^n-1$, $f_2 = \frac{1}{q-1}$, $m_1 = 2$ and $m_2 = i$, we obtain the following identity of rational functions:
\begin{equation}\label{eq:leib}
\begin{split}
[n]_q^{(i)} &= ( (q^n-1) \frac{1}{q-1})^{(i)}\\
&= \sum_{j=0}^{i} \binom{i}{j}(q^n-1)^{(i-j)} (\frac{1}{q-1})^{(j)}\\
&= \sum_{j=0}^{i-1} \binom{i}{j} n(n-1)\cdots (n-(i-j-1)) q^{n-(i-j)} \frac{j!(-1)^j}{(q-1)^{j+1}} +\frac{(q^n-1)i!(-1)^i}{(q-1)^{i+1}}.
\end{split}
\end{equation}
Plugging $q=\omega$ in \eqref{eq:leib}, we obtain
\begin{equation*}
[n]_{\omega}^{(i)} =  \frac{i!}{(\omega-1)^i \omega^i} \sum_{0,i-n \le j \le i-1} \binom{n}{i-j} (-\omega)^j (\omega-1)^{i-j-1},
\end{equation*}
as needed. To prove the second part of the lemma, we again apply the general Leibniz rule and obtain
\begin{equation}\label{eq:leib2}
([n]^i_q)^{(j)} = \sum_{a_1+\ldots+a_i = j} \binom{j}{a_1,\ldots,a_i} \prod_{1 \le k \le i} [n]_q^{(a_k)}.
\end{equation}
Using the first part of the lemma, \eqref{eq:leib2} may be written as follows when we substitute $q=\omega$:
\begin{equation*}
\begin{split}
([n]^i_{\omega})^{(j)} &= \sum_{a_1+\ldots+a_i = j} \binom{j}{a_1,\ldots,a_i} \prod_{1 \le k \le i} \frac{P_{n,a_k}(\omega)}{(\omega-1)^{a_k} \omega^{a_k}}\\
&= \frac{\sum_{a_1+\ldots+a_i = j} \binom{j}{a_1,\ldots,a_i} \prod_{1 \le k \le i} P_{n,a_k}(\omega)}{(\omega-1)^j\omega^j} = \frac{R_{n,i,j}(\omega)}{(\omega-1)^j \omega^j},
\end{split}
\end{equation*}
as needed. We now bound the degree of $R_{n,i,j}(t)$. By definition, $\deg P_{n,i} \le i-1$ if $i \ge 1$ and $P_{n,0}=0$, and so
\begin{equation*}
\deg R_{n,i,j} \le \max_{a_1+\ldots + a_i = j} \sum_{k=1}^{i} \deg P_{n,a_k} \le \max_{a_1+\ldots + a_i = j} \sum_{k=1}^{i} (a_k-1) = j-i, 
\end{equation*}
which in particular shows that $R_{n,i,j}=0$ if $j<i$. Finally, we compute $R_{n,i,i}$. We have just established that $R_{n,i,i}$ is a constant polynomial, and in particular $R_{n,i,i} = R_{n,i,i}(\omega_n)$. From the values $P_{n,1}(\omega_n)=n$ and $P_{n,0}(\omega_n)=0$, and from the fact that $a_1+\ldots+a_i = i$ implies that either $a_k=1$ for all $1 \le k \le i$ or $a_k = 0$ for some $k$, it follows that 
\begin{equation*}
\begin{split}
R_{n,i,i} &= R_{n,i,i}(\omega_n) = \sum_{a_1+\ldots+a_i = i} \binom{i}{a_1,\ldots,a_i} \prod_{1 \le k \le i} P_{n,a_k}(\omega_n) \\
&= \binom{i}{\underbrace{1,\ldots,1}_{i}} P_{n,1}(\omega_n)^i + \sum_{a_1+\ldots + a_i = i, \, a_k = 0 \text{ for some k}} \binom{i}{a_1,\ldots,a_i} \prod_{1 \le k \le i} P_{n,a_k}(\omega_n)\\
&= i!n^i, 
\end{split}
\end{equation*}
as needed.
\end{proof}
\begin{proposition}\label{prop:digits}
Let $f(q) \in \mathbb{C}[q]$, $n \ge 2$ and $r \ge 1$. Assume that for any $0 \le i \le r-1$, the function $g_i \colon \mu_n \to \mathbb{C}$, $\omega \mapsto \omega^i f^{(i)}(\omega)$ depends only on the order of $\omega$. Then the following hold.
\begin{enumerate}
\item For $0 \le i \le r-1$, define $f_i(q)$ recursively by
\begin{equation}\label{eq:propdigitform}
f_i(q) = \frac{1}{i!n^i} \Big((q-1)^i G_{h_i,n}(q)- \sum_{m_1=0}^{i-1} \sum_{m_2=m_1}^{i} \binom{i}{m_2} f_{m_1}^{(i-m_2)}(q)R_{n,m_1,m_2}(q)(q-1)^{i-m_2}q^{i-m_2}\Big),
\end{equation}
where
\begin{equation*}
h_i\colon D_n \to \mathbb{C}, \quad h_i(d) = 
(\omega_n^d)^i f^{(i)}(\omega_n^d)
\end{equation*}
and  $R_{n,m_1,m_2}(t) \in \mathbb{Z}[t]$ are defined in Lemma~\ref{lem:derivn}. 
Then for
\begin{equation}\label{eq:defr}
r(q) = \sum_{i=0}^{r-1} f_i(q) [n]_q^i
\end{equation}
we have 
\begin{equation}\label{eq:rderiv}
f^{(i)}(\omega) = r^{(i)}(\omega)
\end{equation}
for all $0 \le i \le r-1$ and $\omega \in \mu_n \setminus \{1\}$.
\item Let $p$ be the smallest prime divisor of $n$. For all $0 \le i\le r-1$ we have 
\begin{equation}\label{eq:degbound}
\deg f_i \le n-p+i.
\end{equation} 
\item For all $0 \le i\le \min \{ p-2,r-1\}$, 
the $i$-th $[n]_q$-digit of $f$ is $f_i$.
\item For all $1 \le i \le r-1$, $f_i(q)$ is a multiple of $q-1$.
\end{enumerate}
\end{proposition}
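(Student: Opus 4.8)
The plan is to prove the four parts in the stated order, with the bulk of the work in Part 1, and Parts 2--4 following from a degree count, the uniqueness of $[n]_q$-digits, and a short induction respectively. I would first record that the polynomials $f_i(q)$ of \eqref{eq:propdigitform} are well defined: by strong induction on $i$, the $f_{m_1}$ with $m_1<i$ lie in $\mathbb{C}[q]$, the $R_{n,m_1,m_2}(t)$ lie in $\mathbb{Z}[t]$ by Lemma \ref{lem:derivn}, $G_{h_i,n}(q)\in\mathbb{C}[q]$ (each $[n]_q/[n/d]_q$ being a polynomial), and $i!n^i\neq 0$, so $f_i\in\mathbb{C}[q]$.

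For Part 1, fix $\omega\in\mu_n\setminus\{1\}$, so that $[n]_\omega=0$. Differentiating $r(q)=\sum_{j=0}^{r-1}f_j(q)[n]_q^j$ with the Leibniz rule and invoking Lemma \ref{lem:derivn} --- which gives $([n]_\omega^{j})^{(\ell)}=R_{n,j,\ell}(\omega)/((\omega-1)^{\ell}\omega^{\ell})$, with $R_{n,j,\ell}=0$ for $\ell<j$ and $R_{n,i,i}=i!n^i$ --- one sees that only the summands $f_j[n]_q^j$ with $j\le i$ contribute at $q=\omega$, and after isolating the $j=i$ term,
\[ r^{(i)}(\omega)=f_i(\omega)\,\frac{i!n^i}{(\omega-1)^i\omega^i}+\sum_{m_1=0}^{i-1}\sum_{m_2=m_1}^i\binom{i}{m_2}f_{m_1}^{(i-m_2)}(\omega)\,\frac{R_{n,m_1,m_2}(\omega)}{(\omega-1)^{m_2}\omega^{m_2}}. \]
On the other hand, by hypothesis the polynomial $q^i f^{(i)}(q)$ takes a value at $\omega\in\mu_n$ depending only on $\mathrm{ord}(\omega)$, so Lemma \ref{lemrem} applied to $q^i f^{(i)}(q)$ gives $q^i f^{(i)}(q)\equiv G_{h_i,n}(q)\bmod [n]_q$, whence $G_{h_i,n}(\omega)=\omega^i f^{(i)}(\omega)$. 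Substituting this into \eqref{eq:propdigitform} evaluated at $q=\omega$ and dividing through by $(\omega-1)^i\omega^i$ yields precisely
\[ f_i(\omega)\,\frac{i!n^i}{(\omega-1)^i\omega^i}=f^{(i)}(\omega)-\sum_{m_1=0}^{i-1}\sum_{m_2=m_1}^i\binom{i}{m_2}f_{m_1}^{(i-m_2)}(\omega)\,\frac{R_{n,m_1,m_2}(\omega)}{(\omega-1)^{m_2}\omega^{m_2}}, \]
and comparing the two displays gives $r^{(i)}(\omega)=f^{(i)}(\omega)$, which is \eqref{eq:rderiv}. The main obstacle is exactly here: keeping exact track of which Leibniz terms vanish at $\omega$ (because a factor of $[n]_q$ survives in $([n]_q^j)^{(\ell)}$ whenever $\ell<j$) and of the spurious factor $\omega^i$ coming from $G_{h_i,n}$, which must cancel against the $\omega^i$ in the denominator; the recursion \eqref{eq:propdigitform} is engineered so that these match.

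For Part 2 I argue by strong induction on $i$ using \eqref{eq:propdigitform}. Since for $d\in D_n$ the integer $n/d$ is a divisor of $n$ greater than $1$, hence at least $p$, one has $\deg G_{g,n}\le\max_{d\in D_n}\deg([n]_q/[n/d]_q)=\max_{d\in D_n}(n-n/d)\le n-p$ for any $g$; thus $(q-1)^iG_{h_i,n}(q)$ has degree $\le n-p+i$. In each summand of the second term, the inductive hypothesis gives $\deg f_{m_1}^{(i-m_2)}\le(n-p+m_1)-(i-m_2)$, Lemma \ref{lem:derivn} gives $\deg R_{n,m_1,m_2}\le m_2-m_1$, and $\deg((q-1)^{i-m_2}q^{i-m_2})=2(i-m_2)$, so the product has degree at most $(n-p+m_1)-(i-m_2)+(m_2-m_1)+2(i-m_2)=n-p+i$. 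Hence $\deg f_i\le n-p+i$, which is \eqref{eq:degbound}.

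Part 3 is then formal. By Part 1, $f$ and $r$ agree to order $r$ at every primitive $d$-th root of unity with $d\mid n$, $d>1$, so $(q-\omega)^r\mid(f-r)$ for each such $\omega$; since $[n]_q=\prod_{d\mid n,\,d>1}\Phi_d(q)$ is squarefree, $[n]_q^r\mid(f-r)$, say $f=r+[n]_q^r s$ with $s\in\mathbb{C}[q]$. By Part 2, $\deg f_i\le n-p+i<n-1$ whenever $i\le p-2$, so for $0\le i\le\min\{p-2,r-1\}$ the term $f_i[n]_q^i$ already sits in $[n]_q$-digit $i$; the remaining terms $f_j[n]_q^j$ with $j>\min\{p-2,r-1\}$ and the summand $[n]_q^r s$ only contribute to digits of index exceeding $\min\{p-2,r-1\}$, so by uniqueness of $[n]_q$-digits the $i$-th digit of $f$ equals $f_i$. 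Finally, Part 4 is a short induction on $i$: putting $q=1$ in \eqref{eq:propdigitform}, the factor $(q-1)^i$ annihilates the first term for $i\ge 1$ and the factor $(q-1)^{i-m_2}$ annihilates every summand of the second term except the one with $m_2=i$, leaving $f_i(1)=-\tfrac{1}{i!n^i}\sum_{m_1=0}^{i-1}f_{m_1}(1)R_{n,m_1,i}(1)$; the $m_1=0$ term vanishes because $R_{n,0,i}=0$ for $i\ge 1$, and the terms with $1\le m_1\le i-1$ vanish by the inductive hypothesis $f_{m_1}(1)=0$. Hence $f_i(1)=0$, i.e.\ $q-1\mid f_i(q)$.
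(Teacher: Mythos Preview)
Your proof is correct and follows essentially the same approach as the paper: Lemma~\ref{lemrem} for the value of $G_{h_i,n}$ at roots of unity, Lemma~\ref{lem:derivn} for the derivatives of $[n]_q^j$, the same degree count for Part~2, and the same induction for Part~4. The one notable difference is in Part~1: the paper proceeds by induction on $i$, using at step $k$ that $[n]_q^k\mid\big(f-\sum_{i<k}f_i[n]_q^i\big)$ and then applying L'H\^opital $k$ times to identify the quotient at $\omega$ with $f_k(\omega)$, whereas you verify $r^{(i)}(\omega)=f^{(i)}(\omega)$ directly for each $i$ by expanding both sides with Leibniz and comparing --- your route is slightly cleaner (no induction, no limits), but the two arguments unwind to the same identity.
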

\begin{proof}
We prove \eqref{eq:rderiv}, \eqref{eq:degbound} by induction on $i$. For $i=0$, this is an application of Lemma~\ref{lemrem}. Suppose now that \eqref{eq:rderiv}, \eqref{eq:degbound} hold for all $i \le k-1$. To prove that \eqref{eq:rderiv} holds for $k$ in place of $i$ (assuming that $k \le r-1$), we note that the induction hypothesis implies that
\begin{equation*}
f(q) - \sum_{i=0}^{k-1} f_i(q) [n]_q^i = f(q)-r(q) + [n]_q^k \left(\sum_{i=k}^{r} f_i(q) [n]_q^{i-k}\right)
\end{equation*}
is divisible by $[n]_q^{k}$. By $k$ successive applications of L'H\^{o}pital's rule and by Lemma~\ref{lem:derivn}, we have for all $\omega \in \mu_{n} \setminus \{1\}$
\begin{equation}\label{eq:lopital}
\begin{split}
\lim_{q \to \omega}&\frac{ f(q) - \sum_{i=0}^{k-1} f_i(q) [n]_q^i}{[n]_q^{k}}= \lim_{q \to \omega}\frac{ f^{(k)}(q) - \sum_{i=0}^{k-1} (f_i(q) [n]_q^i)^{(k)}}{([n]_q^{k})^{(k)}}\\
&= \frac{ f^{(k)}(\omega) - \sum_{i=0}^{k-1} \sum_{j=0}^{k}\binom{k}{j} f_i^{(k-j)}(\omega) R_{n,i,j}(\omega)(\omega-1)^{-j}\omega^{-j} }{k!n^k/(\omega^k(\omega-1)^k)}\\
&=\frac{1}{k! n^k}  \Big( (\omega-1)^k\omega^kf^{(k)}(\omega) -  \sum_{i=0}^{k-1} \sum_{j=i}^{k} \binom{k}{j} f_i^{(k-j)}(\omega)R_{n,i,j}(\omega) (\omega-1)^{k-j} \omega^{k-j}\Big).
\end{split}
\end{equation}
By Lemma~\ref{lemrem}, $\omega^k f^{(k)}(\omega) = G_{h_k,n}(\omega)$ for all $\omega \in \mu_{n} \setminus \{1\}$, which together with \eqref{eq:lopital} shows that
\begin{equation*}
\lim_{q \to \omega}\frac{ f(q) - \sum_{i=0}^{k-1} f_i(q) [n]_q^i}{[n]_q^{k}} = f_k(\omega).
\end{equation*}
This shows that $(f(q) - \sum_{i=0}^{k} f_i(q) [n]_q^i)/([n]_q^k)$ vanishes on the roots of $[n]_q$, and so $f(q) - \sum_{i=0}^{k} f_i(q) [n]_q^i$ is divisible by $[n]_q^{k+1}$ , thus implying that \eqref{eq:defr} holds for $k$ in place of $i$.

To prove that \eqref{eq:degbound} holds for $k$ in place of $i$, note that $\deg f_k(q) \le \max \{ S_1,S_2\}$ where
\begin{equation*}
S_1 = \deg ((q-1)^k G_{h_k,n}(q))  \le  k+\max_{d \in D_n} \deg \frac{[n]_q}{[n/d]_q} = n-p+k
\end{equation*}
and
\begin{equation*}
\begin{split}
S_2 &= \deg \sum_{i=0}^{k-1} \sum_{j=i}^{k} \binom{k}{j} f_i^{(k-j)}(q)R_{n,i,j}(q) (q-1)^{k-j}q^{k-j} \\
&\le \max_{0 \le i \le k-1,\, i \le j \le k} (\deg f_i^{(k-j)} + \deg R_{n,i,j} + 2(k-j) )\\
&\le \max_{0 \le i \le k-1,\, i \le j \le k} (\deg(f_i)-(k-j) + j-i +  2(k-j))\\
& \le \max_{0 \le i \le k-1,\, i \le j \le k} (n-p+i+(k-i)) = n-p+k,
\end{split}
\end{equation*}
by Lemma~\ref{lem:derivn} and our inductive assumption on $\deg f_i$. Thus $\deg f_k(q) \le n-p+k$, as needed.

Let $\tilde{f}(q) = \sum_{i=0}^{\min\{p-2,r-1\}} f_i(q) [n]_q^i$. By \eqref{eq:degbound}, $\deg \tilde{f} <\deg [n]_q^{\min\{p-2,r-1\}+1}$. By \eqref{eq:defr}, $\tilde{f}(q) - f(q)$ is divisible by $[n]_q^{\min\{p-2,r-1\}+1}$. Thus, $\tilde{f}(q)$ is the remainder of $f(q)$ upon division by $[n]^{\min\{p-2,r-1\}+1}_q$, which proves that $f_i$ is the $i$-th $[n]_q$-digit of $f(q)$ for $0 \le i \le \min\{p-2,r-1\}$.

We turn to prove, by induction on $i$, that the $f_i$-s are divisible by $q-1$ when $1 \le i \le r-1$. For $i=1$, as $R_{n,0,1}=0$, \eqref{eq:propdigitform} shows that $f_1$ is a multiple of $(q-1)^1$ by construction. We now assume that $f_i$ is divisible by $q-1$ for all $1 \le i \le c$ for some $1 \le c<r-1$, and show that $f_{c+1}$ is also divisible by $q+1$. For $i=c+1$, all the summands in \eqref{eq:propdigitform} are multiples of $q-1$, except possibly the summands corresponding to $(m_1,m_2)$ with $m_2=c+1$, which look like $f_{m_1}(q) R_{n,m_1,c+1}(q)$. Note that we may assume that $m_1 \ge 1$ since $R_{n,0,c+1}=0$. Since $1 \le m_1<i$, we can use the induction hypothesis to deduce that these summands are also divisible by $q-1$, and so $f_{c+1}$ is divisible by $q-1$, as needed.
\end{proof}

\subsection{Proof of Theorem~\ref{thmexpansion}}\label{proofexpansion}
Formula \eqref{eq:digitsthmexp} follows from applying Proposition~\ref{prop:digits} with $f(q) = a_{nm}(q)$, which also tells us that the $f_i$-s are divisible by $q-1$ for $1 \le i\le \min\{p-2,r-1\}$. After substituting $q=1$ in 
\begin{equation*}
a_{nm}(q) \equiv \sum_{i=0}^{\min\{p-2,r-1\}} (q-1)\frac{f_i(q)}{q-1} [n]_q^i \bmod [n]_q^{1+\min\{p-2,r-1\}}
\end{equation*}
we obtain
\begin{equation}\label{eq:anm1}
\begin{split}
a_{nm}(1) &\equiv f_0(1) = G_{g_0}(1) =\sum_{d\in D_n} \sum_{e \mid d} \mu(\frac{d}{e}) a_{me}(1) \\
&=\sum_{e \mid n} a_{me}(1) \sum_{d: e\mid d \mid n, \, d \neq n}
\mu(\frac{d}{e})  \bmod n^{1+\min \{ p-2,r-1\}}.
\end{split}
\end{equation}
By Lemma~\ref{lem:mob}, the inner sum in \eqref{eq:anm1} is $1_{e=n}-\mu(\frac{n}{e})$, and so \eqref{eq:anm1} becomes \eqref{eq:supermu}, as needed.  \qed
\section{Examples (II)}
\subsection{Derivatives of $q$-binomial coefficients at roots of unity}\label{sec:derivativesofbinom}
\begin{lem}\label{lem:binomnplusk} \cite[Prop.~4.2]{reiner2004}
Let $n,k$ be non-negative integers. Let $\omega \in \mu_n$. We have
\begin{equation*}
{n+k \brack k}_{\omega} = \binom{\frac{n}{\mathrm{ord}(\omega)} + \lfloor \frac{k}{\mathrm{ord}(\omega)}\rfloor}{\lfloor \frac{k}{\mathrm{ord}(\omega)} \rfloor}.
\end{equation*}
\end{lem}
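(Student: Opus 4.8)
The plan is to derive the formula from the $q$-binomial series quoted in the introduction, by evaluating an explicit product at $\omega$. Write $d=\mathrm{ord}(\omega)$, so $d\mid n$; we may assume $n\ge 1$, the case $n=0$ being degenerate. Taking $n+1$ in place of $n$ in the $q$-binomial series gives the identity
\[
\sum_{j\ge 0}{n+j\brack j}_q t^j=\prod_{i=0}^{n}(1-tq^i)^{-1}
\]
of power series with coefficients in $\mathbb{Z}[q]$; in particular ${n+k\brack k}_q=[t^k]\prod_{i=0}^{n}(1-tq^i)^{-1}$. Since every $t$-coefficient here lies in $\mathbb{Z}[q]$, we may substitute $q=\omega$ to obtain ${n+k\brack k}_\omega=[t^k]\prod_{i=0}^{n}(1-t\omega^i)^{-1}$.

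The first step is to factor $\prod_{i=0}^{n}(1-t\omega^i)$. Because $d\mid n$, among the $n+1$ exponents $i=0,1,\dots,n$ every residue class modulo $d$ is hit exactly $n/d$ times, apart from the class of $0$, which is hit $n/d+1$ times (the extra occurrence being $i=n$). As $\omega$ is a primitive $d$-th root of unity, $\{\omega^0,\dots,\omega^{d-1}\}$ is the full set of $d$-th roots of unity and $\prod_{j=0}^{d-1}(1-t\omega^j)=1-t^d$. Grouping the factors by residue class therefore yields
\[
\prod_{i=0}^{n}(1-t\omega^i)=(1-t)\bigl(1-t^d\bigr)^{n/d},
\]
whence ${n+k\brack k}_\omega=[t^k]\dfrac{1}{(1-t)(1-t^d)^{n/d}}$.

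The second step is to read off the coefficient. Expanding $\tfrac{1}{1-t}=\sum_{s\ge 0}t^s$ and $\tfrac{1}{(1-t^d)^{n/d}}=\sum_{m\ge 0}\binom{n/d+m-1}{m}t^{dm}$ and collecting the terms with $s+dm=k$ gives
\[
{n+k\brack k}_\omega=\sum_{m=0}^{\lfloor k/d\rfloor}\binom{n/d+m-1}{m},
\]
and the hockey-stick identity $\sum_{m=0}^{M}\binom{A+m-1}{m}=\binom{A+M}{M}$, applied with $A=n/d$ and $M=\lfloor k/d\rfloor$, collapses this to $\binom{n/d+\lfloor k/d\rfloor}{\lfloor k/d\rfloor}$, which is the claimed value since $d=\mathrm{ord}(\omega)$.

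I do not expect a genuine obstacle; the only point requiring care is the bookkeeping that produces the single leftover factor $(1-t)$ on top of the $n/d$ complete cycles (the product runs over the $n+1$ indices $0,\dots,n$, not over $n$ indices), together with the trivial remark that substituting $q=\omega$ into the generating-function identity is legitimate because all its $t$-coefficients are polynomials in $q$. An alternative, perhaps matching the proof in \cite{reiner2004}, would split according to whether $d\mid k$, invoking Lemma \ref{lemroots} in the case $d\mid k$ (where $\omega\in\mu_{n+k}$) and a separate computation otherwise; the generating-function argument above is preferable precisely because it handles all $\omega\in\mu_n$ uniformly.
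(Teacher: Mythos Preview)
Your argument is correct. The factorisation $\prod_{i=0}^{n}(1-t\omega^i)=(1-t)(1-t^d)^{n/d}$ is sound (the extra factor $(1-t)$ coming from the index $i=n$), the coefficient extraction is routine, and the hockey-stick identity finishes the job. The justification for substituting $q=\omega$ is also fine: the $q$-binomial series is an identity of formal power series in $t$ with coefficients in $\mathbb{Z}[q]$, and each factor $(1-t\omega^i)^{-1}$ remains an honest element of $\mathbb{C}[[t]]$.

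As for comparison: the paper does not prove this lemma at all; it simply quotes \cite[Prop.~4.2]{reiner2004}. Your generating-function argument is a clean, self-contained substitute that stays entirely within the tools already introduced in the paper (the $q$-binomial series from \S1.2). The alternative you mention at the end---splitting on whether $d\mid k$ and appealing to Lemma~\ref{lemroots} when it does---would also work but is less uniform, so the route you chose is preferable here.
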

In the following propositions we use the convention that $\binom{a}{-1}=0$ for any integer $a$.
\begin{proposition}\label{propfirstderiv}
Let $n,k$ be integers with $n \ge k\ge  0$. Let $i \in \mathbb{Z}$ and set $d=(n,i)$. If $n \mid dk$, we have
\begin{equation*}
 \omega_n^i {n \brack k}^{'}_{\omega_n^i} = \binom{n}{2} \binom{d-1}{\frac{kd}{n}-1}
- \binom{k}{2} \binom{d}{\frac{kd}{n}} .
\end{equation*}
If $n \nmid dk$, we have
\begin{equation*}
 \omega_n^{i\binom{k}{2}+i}  {n \brack k}^{'}_{\omega_n^i} = \frac{n}{\omega_n^{ik}-1} (-1)^{k+\lfloor (k-1)d/n \rfloor + 1} \binom{d-1}{\lfloor (k-1)d/n \rfloor}.
\end{equation*}
\end{proposition}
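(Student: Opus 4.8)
The plan is to differentiate the $q$-Binomial Theorem \eqref{qbinom} with respect to $q$ and then specialize $q$ to $\omega_n^i$. Abbreviate $\zeta=\omega_n^i$, $d=(n,i)$ and $e=n/d=\mathrm{ord}(\zeta)$. Logarithmic differentiation of the left-hand side of \eqref{qbinom} gives
\begin{equation*}
\frac{\partial}{\partial q}\prod_{j=0}^{n-1}(1+tq^j)=\Big(\prod_{j=0}^{n-1}(1+tq^j)\Big)\sum_{j=1}^{n-1}\frac{jtq^{j-1}}{1+tq^j},
\end{equation*}
while differentiating the right-hand side gives $\sum_{k}\big({n \brack k}_q'\,q^{\binom k2}+\binom k2{n \brack k}_q\,q^{\binom k2-1}\big)t^k$. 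Setting $q=\zeta$ and using the identity $\prod_{j=0}^{n-1}(1+t\zeta^j)=(1-(-t)^e)^d$ that comes from plugging $q=\zeta$ into \eqref{qbinom} (as in the proof of Lemma~\ref{lemroots}, equation \eqref{binomqsub}), together with the value of ${n \brack k}_\zeta$ supplied by Lemma~\ref{lemroots}, reduces the task to extracting the coefficient of $t^k$ in the polynomial
\begin{equation*}
(1-(-t)^e)^d\cdot\sum_{j=1}^{n-1}\frac{jt\zeta^{j-1}}{1+t\zeta^j}
\end{equation*}
and then solving for ${n \brack k}_\zeta'$.

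The next step is to find a closed form for the inner sum. Writing $j=ae+b$ with $0\le a\le d-1$, $0\le b\le e-1$, using $\zeta^{j}=\zeta^{b}$, and summing over $a$, the sum becomes $\zeta^{-1}\sum_{b=0}^{e-1}\big(e\binom d2+db\big)\frac{t\zeta^b}{1+t\zeta^b}$. Expanding $\frac{t\zeta^b}{1+t\zeta^b}=\sum_{m\ge1}(-1)^{m-1}t^m\zeta^{bm}$ and interchanging summation, the coefficient of $t^m$ involves $\sum_{b=0}^{e-1}\zeta^{bm}$ and $\sum_{b=0}^{e-1}b\zeta^{bm}$. The former is $e$ if $e\mid m$ and $0$ otherwise; the latter is $\binom e2$ if $e\mid m$ and $\frac{e}{\zeta^m-1}$ otherwise, the last being the elementary evaluation of $\sum_{b=0}^{e-1}bx^b$ at $x=\zeta^m$. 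This writes the inner sum as a sum of two explicit geometric-type series in the variable $t^e$.

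Multiplying by $(1-(-t)^e)^d=\sum_{s}\binom ds(-(-t)^e)^s$ and reading off $[t^k]$ then splits into two cases. If $n\nmid dk$, equivalently $e\nmid k$, then ${n \brack k}_\zeta=0$ kills the extra term, and since $e\mid es$ forces $e\nmid(k-es)$, only the ``$e\nmid m$'' part contributes; collapsing the sum over $s$ with the identity $\sum_{s=0}^{S}(-1)^s\binom ds=(-1)^S\binom{d-1}{S}$ for $S=\lfloor(k-1)d/n\rfloor$ and multiplying by $\zeta$ yields exactly the claimed formula (note $\zeta^{k}=\omega_n^{ik}\ne1$ here, so the right-hand side is well defined). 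If $n\mid dk$, write $k=e\kappa$ with $\kappa=kd/n$; now only the ``$e\mid m$'' part contributes, the product $(1-(-t)^e)^d\cdot\frac{\delta t^e}{1-\delta t^e}$ (with $\delta=(-1)^e$) telescopes to $\delta t^e(1-\delta t^e)^{d-1}$, and extracting $[t^{e\kappa}]$ via the same binomial identity produces a multiple of $\binom{d-1}{\kappa-1}$; the constant $e^2\binom d2+d\binom e2$ equals $\binom n2$ by the identity $\binom{de}2=d\binom e2+e^2\binom d2$, and subtracting the known term $\binom k2{n \brack k}_\zeta=\binom k2\binom d\kappa$ gives the stated expression. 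The degenerate subcase $\zeta=1$, i.e. $n\mid i$, is subsumed here with $e=1$.

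I expect the main obstacle to be the sign bookkeeping in the case $n\mid dk$: one must track factors such as $\zeta^{\binom k2}$, which equals $1$ or $(-1)^\kappa$ according to the parity of $e$ (using $\binom{e\kappa}2=e\binom\kappa2+\kappa^2\binom e2$ together with $\zeta^{\binom e2}=\pm1$), and verify that these signs, together with those arising from $(-t)^e$ and from the binomial-sum collapse, conspire so that the final formula is free of signs. The case $n\nmid dk$ should be comparatively routine, since there the vanishing of ${n \brack k}_\zeta$ removes both the extra term and most of the potential for sign trouble.
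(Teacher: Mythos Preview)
Your proposal is correct and follows essentially the same route as the paper: differentiate the $q$-Binomial Theorem \eqref{qbinom}, specialize $q=\omega_n^i$, use the factorization $\prod_j(1+t\zeta^j)=(1-(-t)^{e})^d$, extract the coefficient of $t^k$, and collapse the remaining alternating binomial sum via $\sum_{s=0}^{r}(-1)^s\binom{d}{s}=(-1)^r\binom{d-1}{r}$. The only organizational difference is that you perform the sum over $j$ first (by writing $j=ae+b$ and reducing to $\sum_b\zeta^{bm}$ and $\sum_b b\zeta^{bm}$), whereas the paper extracts the coefficient of $t^k$ first and then evaluates $\sum_{j=0}^{n-1}j\,\omega_n^{ijk}$ directly via the closed form for $\sum_{j}jx^j$; your identity $e^2\binom{d}{2}+d\binom{e}{2}=\binom{n}{2}$ and the paper's direct evaluation $\sum_{j=0}^{n-1}j=\binom{n}{2}$ are of course the same thing. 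Your telescoping observation $(1-\delta t^e)^d\cdot\frac{\delta t^e}{1-\delta t^e}=\delta t^e(1-\delta t^e)^{d-1}$ in the case $e\mid k$ is a pleasant shortcut that sidesteps one invocation of the alternating-sum identity, but otherwise the two arguments match step for step, including the sign bookkeeping you flag (the paper handles it by noting $\omega_n^{i\binom{k}{2}}=(-1)^{k+kd/n}$ when $n\mid dk$).
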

\begin{proof}
We start by differentiating \eqref{qbinom} with respect to $q$:
\begin{equation}\label{qbinomdif}
\prod_{j=0}^{n-1} (1+tq^j) \cdot \sum_{j=0}^{n-1} \frac{jq^{j-1}t}{1+tq^j} = \sum_{r=0}^{n} q^{\binom{r}{2}-1} t^r \Big( \binom{r}{2} {n \brack r}_q +q  {n \brack r}^{'}_{q} \Big).
\end{equation}
We plug $q=\omega_n^i$ in \eqref{qbinomdif} and obtain
\begin{equation}\label{qbinomdifroot}
\prod_{j=0}^{n-1} (1+t\omega^{ij}_n) \cdot \sum_{j=0}^{n-1} \frac{j\omega_n^{i(j-1)}t}{1+t\omega_n^{ij}} = \sum_{r=0}^{n} \omega_n^{i(\binom{r}{2}-1)}  t^r \Big( \binom{r}{2} {n \brack r}_{\omega^i_n} +\omega_n^i {n \brack r}^{'}_{\omega_n^i} \Big).
\end{equation}
We can simplify the left-hand side of \eqref{qbinomdifroot} by using $\prod_{j=0}^{n-1} (1+t\omega^{ij}_n)=(1-(-t)^{n/d})^d$, and the right-hand side by using Lemma~\ref{lemroots}. We obtain
\begin{equation}\label{qbinomdifroot2}
(1-(-t)^{n/d})^d \cdot \sum_{j=0}^{n-1} \frac{j\omega_n^{i(j-1)}t}{1+t\omega_n^{ij}} = \sum_{r=0}^{n} \omega_n^{i(\binom{r}{2}-1)} t^r \Big( 1_{n \mid dr} \cdot \binom{r}{2} \binom{d}{dr/n} +\omega_n^i  {n \brack r}^{'}_{\omega_n^i} \Big).
\end{equation}
Writing $\frac{1}{1+t\omega_n^{ij}}$ as $\sum_{m \ge 0} t^m (-\omega_n^{ij})^m$, we compare the coefficients of $t^k$ on both sides of \eqref{qbinomdifroot2} and multiply the result by $\omega_n^i$:
\begin{equation}\label{qbinomdifroot3}
\sum_{0 \le s \le (k-1)d/n} \binom{d}{s} (-1)^{(\frac{n}{d}+1)s} \sum_{j=0}^{n-1} (-j) (-\omega_n^{ij})^{k-\frac{n}{d}s} = \omega_n^{i\binom{k}{2}} \Big(   1_{n \mid dk} \cdot \binom{k}{2} \binom{d}{dk/n} + \omega_n^i {n \brack k}^{'}_{\omega_n^i} \Big).
\end{equation}
We can simplify the left-hand side of \eqref{qbinomdifroot3} by noting that $\omega_n^{ij\frac{n}{d}}=1$, which leads to
\begin{equation}\label{qbinomdifroot4}
(-1)^{k+1}\sum_{0 \le s \le (k-1)d/n} \binom{d}{s} (-1)^s \sum_{j=0}^{n-1} j \omega_n^{ikj} = \omega_n^{i\binom{k}{2}} \Big(   1_{n \mid dk} \cdot \binom{k}{2} \binom{d}{dk/n} + \omega_n^i {n \brack k}^{'}_{\omega_n^i} \Big).
\end{equation}
The formal identity $\sum_{j=0}^{n-1} jx^j = x(\sum_{j=0}^{n-1} x^j)' =x(\frac{1-x^n}{1-x})' =  x \frac{(n-1)x^n - nx^{n-1} + 1}{(1-x)^2}$ shows that 
\begin{equation}\label{finiterootsum}
\sum_{j=0}^{n-1} j (\omega_n^m)^j = \begin{cases} \binom{n}{2} & \mbox{if $n \mid m$,}\\ \frac{n}{\omega_n^m-1} & \mbox{otherwise.} \end{cases}
\end{equation}
Applying \eqref{finiterootsum} with $m=ik$, we simplify \eqref{qbinomdifroot4} as follows. If $n \mid dk$, we have
\begin{equation}\label{nmidbeforesimp}
\binom{n}{2}(-1)^{k+1}\sum_{0 \le s \le (k-1)d/n} \binom{d}{s} (-1)^s  =
 \omega_n^{i\binom{k}{2}} \Big(   \binom{k}{2} \binom{d}{dk/n} + \omega_n^i {n \brack k}^{'}_{\omega_n^i} \Big).
\end{equation}
Otherwise, we have
\begin{equation}\label{notmidbeforesimp}
\frac{n}{\omega_n^{ik}-1} (-1)^{k+1} \sum_{0 \le s \le (k-1)d/n} \binom{d}{s} (-1)^s  =
 \omega_n^{i\binom{k}{2}+i}  {n \brack k}^{'}_{\omega_n^i}.
\end{equation}
The identity 
\begin{equation}\label{alterbinomsum}
\sum_{s=0}^{r}  \binom{d}{s} (-1)^s= (-1)^r \binom{d-1}{r},
\end{equation}
which may be proved comparing coefficients in $\frac{(1-x)^d}{1-x}=(1-x)^{d-1}$, together with the observation  that $\omega_n^{i\binom{k}{2}}=(-1)^{k+\frac{kd}{n}}$ when $n \mid dk$, allow us to simplify \eqref{nmidbeforesimp}, \eqref{notmidbeforesimp} and to obtain the result of the proposition.
\end{proof}

\begin{proposition}\label{propbinom1stderiv}
Let $n,k$ be integers with $n \ge k \ge 0$. Let $i \in \mathbb{Z}$ and set $d=(n,i)$. If $n \mid dk$, we have
\begin{equation}\label{eq:secondderiv}
\begin{split}
 \omega_n^{2i} {n \brack k}^{''}_{\omega_n^i} &= \left(n \frac{(3d^2+1)n^2-6d^2n+2d^2}{12d}-2\binom{n}{3} \right)  \left(n\binom{d-2}{\frac{kd}{n}-2} -k\binom{d-1}{\frac{kd}{n}-1}\right)\\
&-\binom{n}{2} \left(n\binom{d-2}{\frac{kd}{n}-2} -(k-1)\binom{d-1}{\frac{kd}{n}-1}\right) +\binom{n}{2}^2 \binom{d-1}{\frac{kd}{n}-1}\\
&- \left(\binom{k}{2}-1\right)\binom{k}{2} \binom{d}{\frac{kd}{n}} - k(k-1) \left(\binom{n}{2}\binom{d-1}{\frac{kd}{n}-1}-\binom{k}{2} \binom{d}{\frac{kd}{n}}\right).
\end{split}
\end{equation}
\end{proposition}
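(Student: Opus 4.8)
The plan is to follow the proof of Proposition~\ref{propfirstderiv}, differentiating the $q$-Binomial Theorem~\eqref{qbinom} one further time in $q$. Set $F(q,t)=\prod_{j=0}^{n-1}(1+tq^j)$ and $S(q,t)=\sum_{j=0}^{n-1}\frac{jq^{j-1}t}{1+tq^j}$, so that logarithmic differentiation gives $\partial_q F=FS$ and hence $\partial_q^2 F=F\,(S^2+\partial_q S)$. Differentiating the right-hand side of~\eqref{qbinom} twice in $q$ produces, as the coefficient of $t^r$, the expression
\begin{equation*}
\binom{r}{2}\Bigl(\binom{r}{2}-1\Bigr){n\brack r}_q q^{\binom r2-2}+2\binom r2{n\brack r}'_q q^{\binom r2-1}+{n\brack r}''_q q^{\binom r2}.
\end{equation*}
Substituting $q=\omega_n^i$, using $F(\omega_n^i,t)=(1-(-t)^{n/d})^d$, comparing the coefficient of $t^k$ in the two sides, and multiplying through by $\omega_n^{2i}/\omega_n^{i\binom k2}$ (which is a sign times $\omega_n^{2i}$, since $\omega_n^{i\binom k2}=(-1)^{k+kd/n}$ when $n\mid dk$) isolates $\omega_n^{2i}{n\brack k}''_{\omega_n^i}$.

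On the differentiated right-hand side the two lower-order terms are already known: ${n\brack k}_{\omega_n^i}=\binom{d}{kd/n}$ by Lemma~\ref{lemroots}, and $\omega_n^i{n\brack k}'_{\omega_n^i}=\binom{n}{2}\binom{d-1}{kd/n-1}-\binom k2\binom d{kd/n}$ by Proposition~\ref{propfirstderiv}. Using $k(k-1)=2\binom k2$, these reproduce exactly the summands $-\bigl(\binom k2-1\bigr)\binom k2\binom d{kd/n}$ and $-k(k-1)\bigl(\binom n2\binom{d-1}{kd/n-1}-\binom k2\binom d{kd/n}\bigr)$ of~\eqref{eq:secondderiv}, so that everything reduces to evaluating $\omega_n^{2i}/\omega_n^{i\binom k2}$ times the coefficient of $t^k$ in $(1-(-t)^{n/d})^d(S^2+\partial_qS)\big|_{q=\omega_n^i}$. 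Expanding $\frac1{1+tq^j}=\sum_{m\ge0}(-tq^j)^m$ and $\frac1{(1+tq^j)^2}=\sum_{m\ge0}(m+1)(-tq^j)^m$ inside $S^2+\partial_qS$, multiplying by $(1-(-t)^{n/d})^d=\sum_s\binom ds(-1)^s(-t)^{sn/d}$, and extracting $[t^k]$ turns the whole quantity into a weighted combination of finite character sums $\sum_{j=0}^{n-1}j^a\omega_n^{ibj}$ with $a\in\{0,1,2\}$; crucially, because $n\mid dk$ forces $\omega_n^{ik}=1$ and $\omega_n^{in/d}=1$, the two inner $j$-sums produced by $S^2$ always have reciprocal arguments.

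The character sums are evaluated in closed form: $\sum_{j=0}^{n-1}j\,\omega_n^{ibj}$ equals $\binom n2$ if $(n/d)\mid b$ and $n/(\omega_n^{ib}-1)$ otherwise, as in~\eqref{finiterootsum}, while $\sum_{j=0}^{n-1}j^2\omega_n^{ibj}$ equals $2\binom n3+\binom n2$ if $(n/d)\mid b$ and $\frac{n^2}{x-1}-\frac{2nx}{(x-1)^2}$ with $x=\omega_n^{ib}$ otherwise. Summing the ``otherwise'' contributions over the $(n/d)$-th roots of unity distinct from $1$ brings in the classical evaluations $\sum_{x^m=1,\,x\ne1}(1-x)^{-1}=(m-1)/2$ and $\sum_{x^m=1,\,x\ne1}x(1-x)^{-2}=-(m^2-1)/12$ with $m=n/d$: the latter is what produces the $\frac{n^2}{12d}$ concealed in the coefficient $n\frac{(3d^2+1)n^2-6d^2n+2d^2}{12d}$, while the $-2\binom n3$ comes from the $\sum_j j(j-1)$ part of $\partial_qS$. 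Finally the sums over $s$ are collapsed using~\eqref{alterbinomsum}, namely $\sum_{s\le r}\binom ds(-1)^s=(-1)^r\binom{d-1}r$, together with its weighted form $\sum_{s\le r}s\binom ds(-1)^s=d\,(-1)^r\binom{d-2}{r-1}$ (immediate from $s\binom ds=d\binom{d-1}{s-1}$ and~\eqref{alterbinomsum}); the factor $d$ here cancels the $d$'s in the denominators and creates the binomial $\binom{d-2}{kd/n-2}$. Simplifying the signs via $\omega_n^{i\binom k2}=(-1)^{k+kd/n}$ and collecting everything yields~\eqref{eq:secondderiv}. The one real obstacle is bookkeeping: deciding which pairs $(s,m)$, resp. triples $(s,m_1,m_2)$, survive the $[t^k]$-extraction, keeping the numerous signs straight, and getting the off-by-one placements of all binomial arguments right; there is no conceptual difficulty beyond Proposition~\ref{propfirstderiv} and the power-sum identities, and a useful consistency check is that the answer must be rational and depend on $i$ only through $d$, which is forced by the functional equation ${n\brack k}_q=q^{k(n-k)}{n\brack k}_{q^{-1}}$.
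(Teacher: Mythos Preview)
Your proof is correct and follows the same overall scheme as the paper: differentiate~\eqref{qbinom} twice in $q$, substitute $q=\omega_n^i$, extract the coefficient of $t^k$, and collapse the resulting $s$-sums with~\eqref{alterbinomsum} and its weighted variant. The one genuine difference is in how the square term $S^2$ (the paper's $S_3$) is evaluated. The paper performs the convolution sum over $r_1$ first, which turns $\sum_{r_1=1}^{r-1}\omega_n^{ir_1(j_1-j_2)}$ into $-1+r\cdot 1_{j_1\equiv j_2\pmod{n/d}}$, and then computes $\sum_{j_1\equiv j_2\pmod{n/d}} j_1 j_2$ directly by splitting into arithmetic progressions; this is where the coefficient $n\frac{(3d^2+1)n^2-6d^2n+2d^2}{12d}$ is produced. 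You instead do the $j$-sums first via~\eqref{finiterootsum}, use the ``reciprocal arguments'' observation (valid precisely because $n\mid dk$ forces $\omega_n^{ir}=1$) to see that the product of the two off-diagonal factors is $-n^2\omega_n^{ir_1}/(\omega_n^{ir_1}-1)^2$, and then sum over $r_1$ using $\sum_{x^m=1,\,x\ne1}x(x-1)^{-2}=-(m^2-1)/12$ with $m=n/d$. Since $x/(x-1)^2=1/(x-1)+1/(x-1)^2$, this identity is exactly Lemma~\ref{lem:rootsunitysum} in disguise, so your route has the pleasant side effect of tying Proposition~\ref{propbinom1stderiv} to a lemma the paper needs elsewhere anyway. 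Both computations are of comparable length; the paper's avoids root-of-unity sums beyond~\eqref{finiterootsum}, while yours recycles existing machinery. One small remark: for the $\partial_q S$ piece the exponent $b$ in $\sum_j j^a\omega_n^{ibj}$ is always a multiple of $n/d$, so only the ``$(n/d)\mid b$'' branch of your character-sum formulas is actually invoked there; the ``otherwise'' branch and the identity $\sum_{x^m=1,\,x\ne1}(1-x)^{-1}=(m-1)/2$ enter only through the $S^2$ contribution.
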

\begin{remark}
Although the expression in the right-hand side of \eqref{eq:secondderiv} can be simplified (see Corollary~\ref{cor:binomder}), as currently written it constitutes a proof that $\omega_n^{2i} {n \brack k}^{''}_{\omega_n^i} \in \mathbb{Z}$ when $n\mid ik$.
\end{remark}
\begin{proof}
We start by differentiating \eqref{qbinom} twice with respect to $q$, which is the same as differentiating \eqref{qbinomdif} once with respect to $q$, and the result is
\begin{equation}\label{qbinomdif2}
\begin{split}
\prod_{j=0}^{n-1} (1+tq^j) \cdot &\left( \sum_{j=0}^{n-1} \frac{ jq^{j-2}t( j-1 -q^jt )}{(1+tq^j)^2} + \left(\sum_{j=0}^{n-1} \frac{jq^{j-1}t}{1+tq^j} \right)^2 \right) \\
&= \sum_{r=0}^{n} q^{\binom{r}{2}-2} t^r \left( \left(\binom{r}{2}- 1\right)\binom{r}{2} {n \brack r}_q +q r(r-1)  {n \brack r}^{'}_{q}+q^2{n \brack r}^{''}_q \right).
\end{split}
\end{equation}
We plug $q=\omega_n^i$ in \eqref{qbinomdif2} and obtain 
\begin{equation}\label{qbinomdifroot2ndderiv}
\begin{split}
(1-(-t)^{n/d})^{d} \cdot &\left( \sum_{j=0}^{n-1} \frac{ j\omega_n^{i(j-2)}t( j-1 -\omega_n^{ij} t )}{(1+t \omega_n^{ij})^2} + \left(\sum_{j=0}^{n-1} \frac{j \omega_n^{i(j-1)}t}{1+t \omega_n^{ij}}\right)^2 \right) \\
&= \sum_{r=0}^{n} \omega_n^{i(\binom{r}{2}-2)} t^r \left( \left(\binom{r}{2}- 1\right)\binom{r}{2} {n \brack r}_{\omega_n^i} +\omega_n^i r(r-1)  {n \brack r}^{'}_{\omega_n^i}+\omega_n^{2i}{n \brack r}^{''}_{\omega_n^i} \right).
\end{split}
\end{equation}
Let 
\begin{equation*}
S_1(t,q) = \sum_{j=0}^{n-1} \frac{j q^{j-2} t(j-1)}{(1+tq^j)^2}, \quad S_2(t,q) = -\sum_{j=0}^{n-1} \frac{j q^{2j-2} t^2}{(1+tq^j)^2}, \quad S_3(t,q) = \Big( \sum_{j=0}^{n-1} \frac{j q^{j-1} t}{1+tq^j} \Big)^2.
\end{equation*}
Comparing the coefficient of $t^k$ in \eqref{qbinomdifroot2ndderiv}, multiplying the result by $\omega_n^{2i}$ and using Lemma~\ref{lemroots} and Proposition~\ref{propfirstderiv} to simplify it, we obtain
\begin{equation}\label{twosidescoeff}
\begin{split}
\omega_n^{2i}&\sum_{0 \le s \le \frac{dk}{n}-1} \binom{d}{s}(-1)^{(\frac{n}{d}+1)s} [t^{k-\frac{n}{d}s}] (S_1(t,\omega_n^i) + S_2(t,\omega_n^i) + S_3(t,\omega_n^i)) \\
& =  (-1)^{k+\frac{kd}{n}} \left( \left(\binom{k}{2}-1\right)\binom{k}{2} \binom{d}{\frac{kd}{n}} + k(k-1) \left(\binom{n}{2}\binom{d-1}{\frac{kd}{n}-1}-\binom{k}{2} \binom{d}{\frac{kd}{n}}\right) + \omega_n^{2i} {n \brack k}^{''}_{\omega_n^i}\right).
\end{split}
\end{equation}
We now compute the coefficient of $t^r$ in $S_i(t,\omega_n^i)$, assuming that $
\frac{n}{d} \mid r$, $r \ge 1$. We begin with $S_1(t,q)$. Since $\frac{1}{(1+t)^2} = \sum_{r \ge 0} (r+1)(-t)^r$ in $\mathbb{C}[[t]]$, we have
\begin{equation}\label{s1eval}
\begin{split}
S_1(t,q) &= -\frac{1}{q^2} \sum_{j=0}^{n-1} j(j-1) \sum_{r \ge 1} r (-tq^j)^{r}\\
&= -\frac{1}{q^2} \sum_{r \ge 1} r(-t)^r \sum_{j=0}^{n-1} j(j-1) q^{jr} \implies \\
\omega_n^{2i}[t^r]S_1(t,\omega_n^i ) &=  - r(-1)^r \sum_{j=0}^{n-1} j(j-1) = -2\binom{n}{3} r(-1)^r . 
\end{split}
\end{equation}
We proceed with $S_2(t,q)$:
\begin{equation}\label{s2eval}
\begin{split}
S_2(t,q) &= -\frac{1}{q^2} \sum_{j=0}^{n-1} j \sum_{r \ge 1} (r-1) (-tq^j)^{r} \\
&= -\frac{1}{q^2} \sum_{r \ge 1} (r-1)(-t)^r \sum_{j=0}^{n-1} j q^{jr} \implies \\
\omega_n^{2i}[t^r]S_2(t,\omega_n^i ) &=  -(r-1)(-1)^r \sum_{j=0}^{n-1} j = -\binom{n}{2} (r-1)(-1)^r . 
\end{split}
\end{equation}
We now treat $S_3(t,q)$.
\begin{equation}\label{s3evalpre}
\begin{split}
S_3(t,q) &= \Big(\sum_{j=0}^{n-1} jq^{j-1} t \sum_{m \ge 0} (-tq^j)^m \Big)^2 \\
&= \frac{1}{q^2} \sum_{r \ge 2} (-t)^r \sum_{\substack{r_1+r_2 = r \\ r_i \ge 1}} \sum_{0 \le j_1,j_2 \le n-1} j_1 j_2 q^{j_1r_1 + j_2 r_2}\implies \\
\omega_n^{2i}[t^r]S_3(t,\omega_n^i ) &=  (-1)^r \sum_{0 \le j_1,j_2 \le n-1} j_1 j_2 \sum_{r_1=1}^{r-1} \omega_n^{ir_1(j_1-j_2)} =(-1)^r \sum_{0 \le j_1,j_2 \le n-1} j_1 j_2 (-1+r \cdot 1_{j_1 \equiv j_2 \bmod \frac{n}{d}}) \\
&= (-1)^r \Big( -\binom{n}{2}^2 +r \sum_{0 \le j_1,j_2 \le n-1, j_1 \equiv j_2 \bmod \frac{n}{d}} 1 \Big).
\end{split}
\end{equation}
Since 
\begin{equation*}
\begin{split}
\sum_{0 \le j_1,j_2 \le n-1, j_1 \equiv j_2 \bmod \frac{n}{d}} 1 &= \sum_{m=0}^{\frac{n}{d}-1} \Big(\sum_{0 \le j \le n-1, j \equiv m \bmod \frac{n}{d}} 1\Big)^2 = \sum_{m=0}^{\frac{n}{d}-1} \Big(\sum_{s=0}^{d-1} (m+\frac{n}{d} s)\Big)^2 \\
&= \sum_{m=0}^{\frac{n}{d}-1} \Big(dm + \frac{n}{d} \binom{d}{2} \Big)^2=n\frac{(3d^2+1)n^2-6d^2n+2d^2}{12d},
\end{split}
\end{equation*}
we obtain from \eqref{s3evalpre} that
\begin{equation}\label{s3eval}
\omega_n^{2i}[t^r]S_3(t,\omega_n^i ) =(-1)^r \Big( -\binom{n}{2}^2 +r n  \frac{(3d^2+1)n^2-6d^2n+2d^2}{12d}\Big).
\end{equation}
Using \eqref{s1eval}, \eqref{s2eval} and \eqref{s3eval}, the left-hand side of \eqref{twosidescoeff} becomes
\begin{equation}\label{lhsderiv2simp1}
\begin{split}
(-1)^k&\Big(n \frac{(3d^2+1)n^2-6d^2n+2d^2}{12d}-2\binom{n}{3} \Big)\sum_{0 \le s \le \frac{kd}{n}-1} \binom{d}{s}(-1)^{s} 
 (k-\frac{n}{d}s)\\
&-(-1)^k\binom{n}{2}\sum_{0 \le s \le \frac{kd}{n}-1} \binom{d}{s}
 (k-\frac{n}{d}s - 1)(-1)^{s}\\
 &-(-1)^k\binom{n}{2}^2
 \sum_{0 \le s \le \frac{kd}{n}-1} \binom{d}{s} (-1)^{s}.
  \end{split}
\end{equation}
Using \eqref{alterbinomsum} and its variant 
\begin{equation*}
\sum_{s=0}^{r} (-1)^s \binom{d}{s}s  = 
-d\sum_{s=1}^{r} (-1)^{s-1} \binom{d-1}{s-1} = d(-1)^r \binom{d-2}{r-1},
\end{equation*}
we can simplify \eqref{lhsderiv2simp1} as
\begin{equation}\label{lhsderiv2simp2}
\begin{split}
(-1)^{k+\frac{kd}{n}}&\left( \left(n \frac{(3d^2+1)n^2-6d^2n+2d^2}{12d}-2\binom{n}{3} \right)  \left(n\binom{d-2}{\frac{kd}{n}-2} -k\binom{d-1}{\frac{kd}{n}-1}\right) \right.\\
&\quad \left. -\binom{n}{2} \left(n\binom{d-2}{\frac{kd}{n}-2} -(k-1)\binom{d-1}{\frac{kd}{n}-1}\right) +\binom{n}{2}^2 \binom{d-1}{\frac{kd}{n}-1}\right).
  \end{split}
\end{equation}
Replacing the left-hand side of \eqref{twosidescoeff} with \eqref{lhsderiv2simp2}, dividing by $(-1)^{k+\frac{kd}{n}}$ and isolating the term $\omega_n^{2i} {n \brack k}_{\omega_n^i}$, we conclude the proof.
\end{proof}
We can simplify the expressions in Propositions~\ref{propfirstderiv} and \ref{propbinom1stderiv} using the relations $\binom{d-1}{\frac{kd}{n}-1} = \binom{d}{\frac{kd}{n}} \frac{k}{n}$ for $\frac{kd}{n}\ge 1$ and $\binom{d-2}{\frac{kd}{n}-2} = \binom{d-1}{\frac{kd}{n}-1}\frac{\frac{kd}{n}-1}{d-1}$ for $\frac{kd}{n} \ge 2$, and obtain the following corollary from Lemma~\ref{lemroots} and these propositions.
\begin{cor}\label{cor:binomder}
Fix $n \ge k \ge 0$. For every $0 \le j \le 2$, the function $\omega^j {n \brack k}_{\omega}^{(j)}$, as a function of $\omega\in \mu_{\gcd(n,k)}$, depends only on the order of $\omega$ and assumes integer values. In fact, for any primitive root of unity $\omega \in \mu_{\gcd(n,k)}$ of order $d$, we have
\begin{equation*}
\begin{split}
{n \brack k}_{\omega} &=  \binom{n/d}{k/d},\\
\omega {n \brack k}'_{\omega} &=  \binom{n/d}{k/d} \frac{k(n-k)}{2},\\
\omega^2 {n \brack k}''_{\omega} &=  \binom{n/d}{k/d} k(n-k) \left(\frac{k(n-k)}{4}+ \frac{nd-5}{12}\right).
\end{split}
\end{equation*}
If $\omega \in \mu_{n} \setminus \mu_{k}$ is a primitive root of unity of order $d$, we have
\begin{equation*}
\begin{split}
{n \brack k}_{\omega} & = 0, \\
 \omega^{\binom{k}{2}+1}  {n \brack k}^{'}_{\omega} &= \frac{n}{\omega^{k}-1} (-1)^{k+\lfloor (k-1)/d \rfloor + 1} \binom{\frac{n}{d}-1}{\lfloor (k-1)/d \rfloor}.
 \end{split}
\end{equation*}
\end{cor}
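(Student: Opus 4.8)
The plan is to derive the corollary directly from Lemma \ref{lemroots} and Propositions \ref{propfirstderiv}, \ref{propbinom1stderiv} by a change of notation together with two elementary binomial identities. Fix $n \ge k \ge 0$ and $\omega \in \mu_n$, write $\omega = \omega_n^i$, let $d = \mathrm{ord}(\omega)$ and set $D = (n,i)$, so that $D = n/d$. In the notation of Propositions \ref{propfirstderiv}--\ref{propbinom1stderiv} the letter ``$d$'' stands for $(n,i)$, i.e.\ our $D$; thus $\tfrac{Dk}{n} = \tfrac{k}{d}$, the hypothesis $\omega \in \mu_{\gcd(n,k)}$ (equivalently $d \mid k$) is exactly their condition $n \mid Dk$, and $\omega \in \mu_n \setminus \mu_k$ is their condition $n \nmid Dk$.

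The case $j = 0$ is immediate from Lemma \ref{lemroots}: it gives ${n \brack k}_\omega = \binom{D}{Dk/n} = \binom{n/d}{k/d}$ when $d \mid k$, and ${n \brack k}_\omega = 0$ otherwise. For $j = 1$, I would invoke Proposition \ref{propfirstderiv}. When $n \mid Dk$, its formula reads $\omega {n \brack k}'_\omega = \binom{n}{2}\binom{n/d-1}{k/d-1} - \binom{k}{2}\binom{n/d}{k/d}$; applying $\binom{m-1}{\ell-1} = \tfrac{\ell}{m}\binom{m}{\ell}$ with $m = n/d$, $\ell = k/d$ turns the first term into $\tfrac{k}{n}\binom{n}{2}\binom{n/d}{k/d}$, and $\tfrac{k}{n}\binom{n}{2} - \binom{k}{2} = \tfrac{k(n-1) - k(k-1)}{2} = \tfrac{k(n-k)}{2}$ gives the asserted value. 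When $n \nmid Dk$, the second formula of Proposition \ref{propfirstderiv} is already in the required shape once one observes that $\omega_n^{i\binom{k}{2}+i} = \omega^{\binom{k}{2}+1}$, that $\omega_n^{ik} = \omega^k$, and that $\lfloor (k-1)D/n \rfloor = \lfloor (k-1)/d \rfloor$.

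For $j = 2$, I would invoke Proposition \ref{propbinom1stderiv}, which applies precisely when $n \mid Dk$, i.e.\ for $\omega \in \mu_{\gcd(n,k)}$; this is the only second-derivative value the corollary records, so no further input is needed. I would substitute $D = n/d$, $\tfrac{Dk}{n} = \tfrac{k}{d}$ into the right-hand side of \eqref{eq:secondderiv} and use, in addition to $\binom{m-1}{\ell-1} = \tfrac{\ell}{m}\binom{m}{\ell}$, the identity $\binom{m-2}{\ell-2} = \tfrac{\ell(\ell-1)}{m(m-1)}\binom{m}{\ell}$ (again $m = n/d$, $\ell = k/d$), so that every binomial coefficient appearing there becomes a rational multiple of $\binom{n/d}{k/d}$. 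After factoring $\binom{n/d}{k/d}$ out, the remaining factor is a rational function of $n$, $k$, $d$, and the task is to verify that it collapses to $k(n-k)\bigl(\tfrac{k(n-k)}{4} + \tfrac{nd - 5}{12}\bigr)$. I expect this collapse of the five-term expression in \eqref{eq:secondderiv} to the compact form to be the only substantive computation in the proof: it is routine but somewhat lengthy, and I would carry it out by clearing denominators and comparing polynomials in $n$ and $k$.

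Finally, the ``depends only on $\mathrm{ord}(\omega)$'' and ``integer values'' assertions are essentially free. For $\omega \in \mu_{\gcd(n,k)}$ the expressions obtained above for $j = 0, 1, 2$ involve only $n$, $k$ and $d = \mathrm{ord}(\omega)$, which is the first claim. Integrality for $j = 0$ and $j = 1$ is clear from Lemma \ref{lemroots} and from the first formula of Proposition \ref{propfirstderiv}, while for $j = 2$ it is inherited from \eqref{eq:secondderiv}, whose right-hand side is manifestly an integer (as noted in the remark following Proposition \ref{propbinom1stderiv}); hence its simplified form $\binom{n/d}{k/d}\,k(n-k)\bigl(\tfrac{k(n-k)}{4} + \tfrac{nd-5}{12}\bigr)$ is a rational integer, even though neither $\tfrac{k(n-k)}{4}$ nor $\tfrac{nd-5}{12}$ need be.
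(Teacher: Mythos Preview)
Your proposal is correct and follows essentially the same route as the paper: the paper derives the corollary by simplifying the expressions in Lemma~\ref{lemroots} and Propositions~\ref{propfirstderiv}, \ref{propbinom1stderiv} via the identities $\binom{D-1}{kD/n-1}=\frac{k}{n}\binom{D}{kD/n}$ and $\binom{D-2}{kD/n-2}=\frac{kD/n-1}{D-1}\binom{D-1}{kD/n-1}$ (with $D=(n,i)=n/d$), which are exactly your two binomial identities composed. Your handling of the notational switch between the propositions' $d=(n,i)$ and the corollary's $d=\mathrm{ord}(\omega)$ is accurate, and your remark that the $j=2$ integrality is inherited from \eqref{eq:secondderiv} matches the paper's own observation in the remark following Proposition~\ref{propbinom1stderiv}.
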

\subsection{Sums of roots of unity}
The following lemma was essentially proved by Shi and Pan \cite{shi2007}. We provide a different proof.
\begin{lem}\label{lem:rootsunitysum}
Let $n \ge 1$. Let $\omega$ be a primitive root of unity of order $n$. Then
\begin{equation*}
\begin{split}
\sum_{i=1}^{n-1} \frac{1}{\omega^i - 1} &= -\frac{n-1}{2},\\
\sum_{i=1}^{n-1} \frac{1}{(\omega^i - 1)^2} &= -\frac{(n-1)(n-5)}{12}.
\end{split}
\end{equation*}
\end{lem}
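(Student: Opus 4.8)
The plan is to derive both identities from the logarithmic derivative of $X^n - 1 = \prod_{i=0}^{n-1}(X - \omega^i)$, where $\omega$ is a fixed primitive $n$-th root of unity. Write $P(X) = X^n - 1$, so that $P(X) = \prod_{i=0}^{n-1}(X - \omega^i)$. Taking the logarithmic derivative gives $\frac{P'(X)}{P(X)} = \sum_{i=0}^{n-1} \frac{1}{X - \omega^i}$, i.e. $\frac{nX^{n-1}}{X^n - 1} = \sum_{i=0}^{n-1} \frac{1}{X - \omega^i}$. I would separate off the term $i = 0$ and then evaluate a suitable limit as $X \to 1$. Concretely, $\sum_{i=1}^{n-1} \frac{1}{X - \omega^i} = \frac{nX^{n-1}}{X^n - 1} - \frac{1}{X - 1} = \frac{nX^{n-1}(X-1) - (X^n - 1)}{(X^n-1)(X-1)}$. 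To get $\sum_{i=1}^{n-1} \frac{1}{\omega^i - 1}$ I substitute $X = 1$ after expanding; since both numerator and denominator vanish to order $2$ at $X = 1$, I would set $X = 1 + \varepsilon$ and extract the ratio of the leading ($\varepsilon^2$) coefficients. Using $X^n - 1 = n\varepsilon + \binom{n}{2}\varepsilon^2 + O(\varepsilon^3)$ and $X^{n-1} = 1 + (n-1)\varepsilon + O(\varepsilon^2)$, a short computation of the numerator $nX^{n-1}(X-1) - (X^n-1) = \binom{n}{2}\varepsilon^2 + O(\varepsilon^3)$ and denominator $(X^n-1)(X-1) = n\varepsilon^2 + O(\varepsilon^3)$ yields $\sum_{i=1}^{n-1}\frac{1}{\omega^i - 1} = \frac{\binom{n}{2}}{n} \cdot (-1)$; wait—note $\frac{1}{\omega^i - 1} = -\frac{1}{1 - \omega^i}$, so one must track the sign carefully, and the answer comes out to $-\frac{n-1}{2}$ as claimed.

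For the second identity I would differentiate once more. Differentiating $\sum_{i=0}^{n-1} \frac{1}{X - \omega^i} = \frac{nX^{n-1}}{X^n-1}$ with respect to $X$ gives $-\sum_{i=0}^{n-1}\frac{1}{(X-\omega^i)^2} = \left(\frac{nX^{n-1}}{X^n-1}\right)'$. Again I peel off the $i = 0$ term, obtaining $\sum_{i=1}^{n-1}\frac{1}{(X - \omega^i)^2} = \frac{1}{(X-1)^2} - \left(\frac{nX^{n-1}}{X^n-1}\right)'$, and then take the limit $X \to 1$. The right-hand side is a rational function with a pole of apparent order $4$ at $X = 1$ in each piece, but these cancel; substituting $X = 1 + \varepsilon$ and carrying the expansions of $X^n - 1$ and $X^{n-1}$ to one higher order ($\binom{n}{3}\varepsilon^3$ and $\binom{n-1}{2}\varepsilon^2$ respectively) will produce the finite value $-\frac{(n-1)(n-5)}{12}$. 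Since $\frac{1}{(\omega^i-1)^2} = \frac{1}{(1 - \omega^i)^2}$, there is no sign subtlety in this case.

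The main obstacle is purely computational: organizing the Taylor expansions around $X = 1$ so that the order-$4$ (resp. order-$2$ and order-$3$) cancellations are transparent and the surviving coefficient is extracted without arithmetic slips. A cleaner bookkeeping device, which I would use to minimize error, is to work with the substitution $X = 1 + \varepsilon$ throughout and to record $X^n - 1 = \sum_{j \ge 1}\binom{n}{j}\varepsilon^j$ exactly, so that $\frac{nX^{n-1}}{X^n - 1} = \frac{n(1+\varepsilon)^{n-1}}{\sum_{j\ge1}\binom{n}{j}\varepsilon^j}$ becomes a ratio of explicit power series in $\varepsilon$ whose Laurent expansion at $\varepsilon = 0$ can be read off term by term; the constant term of $\frac{nX^{n-1}}{X^n-1} - \frac{1}{\varepsilon}$ gives the first sum, and the constant term of its derivative (adjusted by $\frac{1}{\varepsilon^2}$) gives the second. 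Everything else is routine manipulation of binomial coefficients, using identities such as $n\binom{n-1}{2} - \binom{n}{3} \cdot$(appropriate factor) to reach the stated closed forms.
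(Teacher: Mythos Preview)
Your approach is correct and will produce both identities, but it differs from the paper's. The paper shifts the variable in the factorisation $\prod_{i=1}^{n-1}(q-\omega^i)=(q^n-1)/(q-1)$ by $q\mapsto q+1$, obtaining
\[
\prod_{i=1}^{n-1}\bigl(q-(\omega^i-1)\bigr)=\frac{(q+1)^n-1}{q}=q^{n-1}+\cdots+\binom{n}{3}q^2+\binom{n}{2}q+n,
\]
so that the elementary symmetric functions $\sigma_{n-1},\sigma_{n-2},\sigma_{n-3}$ of the numbers $\omega^i-1$ can be read off directly from the low-order coefficients. The two sums then follow from the Newton-type identities $\sum_i 1/x_i=\sigma_{n-2}/\sigma_{n-1}$ and $\sum_i 1/x_i^2=(\sigma_{n-2}/\sigma_{n-1})^2-2\sigma_{n-3}/\sigma_{n-1}$, with no limits or series expansions needed. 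Your logarithmic-derivative route is essentially the analytic dual of this: both arguments rest on the same factorisation, but the paper extracts the information from polynomial \emph{coefficients}, whereas you extract it from a \emph{limit} of a rational function at an apparent singularity. The paper's bookkeeping is lighter (three binomial coefficients and two one-line computations), while your method has the advantage of generalising mechanically to higher power sums $\sum_i 1/(\omega^i-1)^k$ by taking further derivatives, at the cost of the cancellation-tracking you correctly identify as the main obstacle.
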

\begin{proof}
Substituting $q+1$ in place of $q$ in $\prod_{i=1}^{n-1} (q-\omega^i) = \frac{q^n-1}{q-1}$, we obtain
\begin{equation}\label{eq:prodshifted}
\prod_{i=1}^{n-1} (q+1-\omega^i) = \frac{(q+1)^n-1}{q}=q^{n-1}+\ldots + \binom{n}{3}q^2 + \binom{n}{2}q + n.
\end{equation}
By equating coefficients in \eqref{eq:prodshifted}, we see that 
\begin{equation*}
\begin{split}
\sigma_{n-1}&:=\prod_{i=1}^{n-1}(\omega^i-1) = n(-1)^{n-1}, \\
\sigma_{n-2} &:= \sum_{i=1}^{n-1} \prod_{1 \le j \le n-1,\, j \neq i} (\omega^j-1) = \binom{n}{2}(-1)^{n-2},\\
\sigma_{n-3} &:= \sum_{1 \le i<j \le n-1} \prod_{1 \le k \le n-1,\, k \neq i,j} (\omega^k-1) = \binom{n}{3}(-1)^{n-3}.
\end{split}
\end{equation*}
Thus,
\begin{equation*}
\sum_{i=1}^{n-1} \frac{1}{\omega^i-1} = \frac{\sigma_{n-2}}{\sigma_{n-1}} = -\frac{n-1}{2}
\end{equation*}
and
\begin{equation*}
\sum_{i=1}^{n-1} \frac{1}{(\omega^i-1)^2} = \left(\frac{\sigma_{n-2}}{\sigma_{n-1}}\right)^2 -2 \frac{\sigma_{n-3}}{\sigma_{n-1}} = \left(-\frac{n-1}{2}\right)^2 - 2 \frac{\binom{n}{3}}{n}= -\frac{(n-1)(n-5)}{12},
\end{equation*}
as needed.
\end{proof}
\subsection{Proof of Theorem~\ref{thm:superqbinom}}
Let $a \ge b \ge 1$ and define $a_n(q) = {an \brack bn}_q$. Corollary~\ref{cor:binomder} implies that \eqref{eq:an1b}--\eqref{eq:an3b} hold for any $n \ge 1$ and $\omega \in \mu_n$. \qed 

\subsection{Proof of Theorem~\ref{thm:aprey}}\label{sec:superqapery}
We show that \eqref{eq:an1} holds by using Lemma~\ref{lemroots}, which tells us in particular that ${n \brack k}_{\omega}$ vanishes for $\omega \in \mu_n \setminus \mu_k$. For any $\omega \in \mu_n$,
\begin{equation*}
\begin{split}
a_n(\omega)&=\sum_{k=0}^{n} {n \brack k}_{\omega}^2 {n+k \brack k}_{\omega}^2 \omega^{f(n,k)} \\
&= \sum_{0 \le k \le n, \, \mathrm{ord}(\omega) \mid k} \binom{n/\mathrm{ord}(\omega)}{k/\mathrm{ord}(\omega)}^2 {n+k \brack k}_{\omega}^2 \omega^{f(n,k)}\\
&=\sum_{0 \le k \le n, \, \mathrm{ord}(\omega) \mid k} \binom{n/\mathrm{ord}(\omega)}{k/\mathrm{ord}(\omega)}^2 \binom{(n+k)/\mathrm{ord}(\omega)}{k/\mathrm{ord}(\omega)}^2= a_{n/\mathrm{ord}(\omega)}(1).
\end{split}
\end{equation*}
We show that \eqref{eq:an2} holds. The derivative of $a_n(q)$, times $q$, is given by
\begin{equation*}
qa_n'(q) = \sum_{k, \ell} {n \brack k}_q {n+k \brack k}_q q^{f(n,k)} \left(2q{n \brack k}'_q {n+k \brack k}_q + 2q{n \brack q}_q {n+k \brack k}'_q + f(n,k) {n \brack k}_q {n+k \brack k}_q\right),
\end{equation*}
and Corollary~\ref{cor:binomder} allows us to evaluate it at $q=\omega \in \mu_n$:
\begin{equation*}
\begin{split}
\omega a_n'(\omega) &= \sum_{0\le k \le n, \, \mathrm{ord}(\omega) \mid k} \binom{\frac{n}{\mathrm{ord}(\omega)}}{\frac{k}{\mathrm{ord}(\omega)}} \binom{\frac{n+k}{\mathrm{ord}(\omega)}}{\frac{k}{\mathrm{ord}(\omega)}}\\
&  \qquad \qquad \cdot \left(2\omega {n \brack k}'_{\omega} \binom{\frac{n+k}{\mathrm{ord}(\omega)}}{\frac{k}{\mathrm{ord}(\omega)}} + 2\omega \binom{\frac{n}{\mathrm{ord}(\omega)}}{\frac{k}{\mathrm{ord}(\omega)}} {n+k \brack k}'_{\omega}  +f(n,k)  \binom{\frac{n}{\mathrm{ord}(\omega)}}{\frac{k}{\mathrm{ord}(\omega)}} \binom{\frac{n+k}{\mathrm{ord}(\omega)}}{\frac{k}{\mathrm{ord}(\omega)}}\right)\\
&=\sum_{0\le k \le n, \, \mathrm{ord}(\omega) \mid k} \binom{\frac{n}{\mathrm{ord}(\omega)}}{\frac{k}{\mathrm{ord}(\omega)}}^2 \binom{\frac{n+k}{\mathrm{ord}(\omega)}}{\frac{k}{\mathrm{ord}(\omega)}}^2 (  k(n-k)+  kn  +f(n,k))\\
&= \mathrm{ord}(\omega)^2 \sum_{0\le k \le n, \, \mathrm{ord}(\omega) \mid k} \binom{\frac{n}{\mathrm{ord}(\omega)}}{\frac{k}{\mathrm{ord}(\omega)}}^2 \binom{\frac{n+k}{\mathrm{ord}(\omega)}}{\frac{k}{\mathrm{ord}(\omega)}}^2 \left(2\frac{k}{\mathrm{ord}(\omega)}\frac{n}{\mathrm{ord}(\omega)}-(\frac{k}{\mathrm{ord}(\omega)})^2 \right.\\
&  \qquad \left. + f(\frac{n}{\mathrm{ord}(\omega)},\frac{k}{\mathrm{ord}(\omega)})\right)= \mathrm{ord}(\omega)^2 a_{\frac{n}{\mathrm{ord}(\omega)},2xy-y^2+f(x,y)}.
\end{split}
\end{equation*}
We show that \eqref{eq:an3} holds. The second derivative of $a_n(q)$, times $q^2$, is given by
\begin{equation*}
q^2a_n''(q) = S_{n,1}(q) + S_{n,2}(q),
\end{equation*}
where 
\begin{equation*}
\begin{split}
S_{n,1}(q) &= \sum_{k=0}^{n} {n \brack k}_q q^{f(n,k)}\Big(2{n \brack k}''_q {n+k \brack k}^2_q q^2 + 8{n \brack k}'_q {n+k \brack k}_q {n+k \brack k}'_q q^2 + 4f(n,k){n \brack k}'_q {n+k \brack k}^2_q  q \\
&\qquad +2{n \brack k}_q ({n+k \brack k}'_q)^2 q^2   + 2 {n \brack k}_q {n+k \brack k}_q {n+k \brack k}''_q q^2 + 4f(n,k) {n \brack k}_q {n+k \brack k}_q {n+k \brack k}'_q q  \\
& \qquad + f(n,k) (f(n,k)-1){n \brack k}_q {n+k \brack k}^2_q \Big) \\
\end{split}
\end{equation*}
and 
\begin{equation}
S_{n,2}(q) = 2\sum_{k=0}^{n} q^2\Big({n \brack k}_q'\Big)^2 {n+k \brack k}_q^2 q^{f(n,k)}.
\end{equation}
Since ${n \brack k}_q$ vanishes on $\mu_n \setminus \mu_k$, Corollary~\ref{cor:binomder} allows us to evaluate $S_{n,1}(q)$ at $q=\omega \in \mu_n$ similarly to the evaluation of $a'_n(\omega)$ and $a_n(\omega)$:
\begin{equation}\label{eq:sn1val}
\begin{split}
S_{n,1}(\omega)&=\sum_{0 \le k \le n,\, \mathrm{ord}(\omega) \mid k} \binom{\frac{n}{\mathrm{ord}(\omega)}}{\frac{k}{\mathrm{ord}(\omega)}}^2 \binom{\frac{n+k}{\mathrm{ord}(\omega)}}{\frac{k}{\mathrm{ord}(\omega)}}^2  \bigg(2  k(n-k)\left(\frac{k(n-k)}{4}+\frac{n\cdot \mathrm{ord}(\omega)-5}{12}\right) + 8 \frac{k(n-k)}{2} \frac{kn}{2} \\
& \qquad  \quad + 4f(n,k) \frac{k(n-k)}{2} + 2 \left(\frac{kn}{2}\right)^2 +  2 kn \left(\frac{kn}{4} + \frac{(n+k)\cdot \mathrm{ord}(\omega)-5}{12}\right) \\
& \qquad \quad + 4f(n,k) \frac{kn}{2} + f(n,k)(f(n,k)-1)\bigg)\\
&= \sum_{0 \le k \le n,\, \mathrm{ord}(\omega) \mid k} \binom{\frac{n}{\mathrm{ord}(\omega)}}{\frac{k}{\mathrm{ord}(\omega)}}^2 \binom{\frac{n+k}{\mathrm{ord}(\omega)}}{\frac{k}{\mathrm{ord}(\omega)}}^2 (3.5n^2k^2 +0.5k^4-3nk^3  - \frac{5}{6}k(2n-k) + 2f(n,k)k(2n-k)\\
&\qquad \quad +f(n,k)(f(n,k)-1)+ \frac{n^2k}{3}\mathrm{ord}(\omega))\\
&=\mathrm{ord}(\omega)^4 a_{\frac{n}{\mathrm{ord}(\omega)}, 3.5x^2y^2+0.5y^4-3xy^3+\frac{x^2y}{3}+2f(x,y)y(2x-y)+f(x,y)^2} + \mathrm{ord}(\omega)^2 a_{\frac{n}{\mathrm{ord}(\omega)}, -\frac{5}{6}y(2x-y)-f(x,y)}.
\end{split}
\end{equation}
We turn to evaluate $S_{n,2}(q)$ at $q=\omega\in \mu_n$. We separate $S_{n,2}(q)$ into two sums -- one with the summands corresponding to $k$ divisible by $\mathrm{ord}(\omega)$, and another with the rest:
\begin{equation*}
S_{n,2}(q) = T_1(q) + T_2(q),
\end{equation*}
where
\begin{align*}
T_1(q) &= 2\sum_{0 \le k \le n,\, \mathrm{ord}(\omega) \mid k} q^2({n \brack k}_q')^2 {n+k \brack k}_{\omega}^2 q^{f(n,k)},\\
T_2(q) &= 2\sum_{0 \le k \le n,\, \mathrm{ord}(\omega) \nmid k} q^2({n \brack k}_q')^2 {n+k \brack k}_{\omega}^2 q^{f(n,k)}. 
\end{align*}
We use Lemma~\ref{lem:binomnplusk} and Corollary~\ref{cor:binomder}
to evaluate $T_1(\omega)$:
\begin{equation}\label{t1val}
\begin{split}
T_1(\omega) &= 2\sum_{0 \le k \le n,\, \mathrm{ord}(\omega) \mid k} \Big(\binom{\frac{n}{\mathrm{ord}(\omega)}}{\frac{k}{\mathrm{ord}(\omega)}} \frac{k(n-k)}{2} \Big)^2 \binom{\frac{n}{\mathrm{ord}(\omega)} + \frac{k}{\mathrm{ord}(\omega)}}{\frac{k}{\mathrm{ord}(\omega)}}^2\\
&=\mathrm{ord}(\omega)^4 a_{\frac{n}{\mathrm{ord}(\omega)},\frac{y^2(x-y)^2}{2}}.
\end{split}
\end{equation}
We turn to $T_2(\omega)$. By Lemma~\ref{lem:binomnplusk} and Corollary~\ref{cor:binomder}, we may evaluate $T_2(\omega)$ as follows:
\begin{equation}\label{eq:t2omega1}
T_2(\omega) = 2\sum_{0 \le k \le n,\, \mathrm{ord}(\omega) \nmid k} \left(\omega^{-\binom{k}{2}}\frac{n}{\omega^k-1} \binom{\frac{n}{\mathrm{ord}(\omega)}-1}{\lfloor \frac{k-1}{\mathrm{ord}(\omega)} \rfloor}\right)^2 \binom{\frac{n}{\mathrm{ord}(\omega)} + \lfloor \frac{k}{\mathrm{ord}(\omega)} \rfloor}{\lfloor \frac{k}{\mathrm{ord}(\omega)} \rfloor}^2\omega^{f(n,k)} .
\end{equation}
We write every $0 \le k \le n$ with $\mathrm{ord}(\omega) \nmid k$ as $k=\mathrm{ord}(\omega)j+i$ with $1 \le i \le \mathrm{ord}(\omega)-1$ and $0 \le j \le \frac{n}{\mathrm{ord}(\omega)}-1$. Now we have $\omega^{k}=\omega^i$, $\omega^{f(n,k)}=\omega^{k^2} = \omega^{i^2}$, $\lfloor \frac{k}{\mathrm{ord}(\omega)} \rfloor = \lfloor \frac{k-1}{\mathrm{ord}(\omega)} \rfloor = j$. We express \eqref{eq:t2omega1} as
\begin{equation}\label{eq:t2omega2}
\begin{split}
T_2(\omega) &= 2n^2\sum_{i=1}^{\mathrm{ord}(\omega)-1} \sum_{j=0}^{\frac{n}{\mathrm{ord}(\omega)}-1} 
\frac{\omega^{-i^2+i}}{(\omega^i-1)^2} \binom{\frac{n}{\mathrm{ord}(\omega)}-1}{j}^2 \binom{\frac{n}{\mathrm{ord}(\omega)}+j}{j}^2 \omega^{i^2}\\
&=2\left(\sum_{i=1}^{\mathrm{ord}(\omega)-1} 
\frac{\omega^{i}}{(\omega^i-1)^2}\right)(\sum_{j=0}^{\frac{n}{\mathrm{ord}(\omega)}-1}  \binom{\frac{n}{\mathrm{ord}(\omega)}}{j}^2 \binom{\frac{n}{\mathrm{ord}(\omega)}+j}{j}^2 (n-j\mathrm{ord}(\omega))^2 ).
\end{split}
\end{equation}
The sum $\sum_{i=1}^{\mathrm{ord}(\omega)-1} 
\frac{\omega^{i}}{(\omega^i-1)^2}$ is evaluated in Lemma~\ref{lem:rootsunitysum}. From \eqref{t1val}--\eqref{eq:t2omega2} and Lemma~\ref{lem:rootsunitysum}, we obtain
\begin{equation}\label{eq:sn2eval}
S_{n,2}(\omega)=\mathrm{ord}(\omega)^4 a_{\frac{n}{\mathrm{ord}(\omega)},\frac{y^2(x-y)^2}{2}-\frac{(x-y)^2}{6}}+ \mathrm{ord}(\omega)^2 a_{\frac{n}{\mathrm{ord}(\omega)},\frac{(x-y)^2}{6}}.
\end{equation}
From \eqref{eq:sn1val} and \eqref{eq:sn2eval}, we obtain \eqref{eq:an3}, as needed. \qed
\subsection{Proof of Theorem~\ref{thm:zeta}}\label{sec:superqzeta}
We show that \eqref{eq:an1z} holds by using Lemma~\ref{lemroots}, which tells us in particular that ${n \brack m}_{\omega}$ vanishes for $\omega \in \mu_n \setminus \mu_m$. For any $\omega \in \mu_n$,
\begin{equation*}
\begin{split}
a_n(\omega)&= \sum_{k,\ell} {n \brack k}_{\omega}^2 {n \brack \ell}_{\omega} {k \brack \ell}_{\omega} {k+\ell \brack n}_{\omega} \omega^{f(n,k,\ell)}  \\
&= \sum_{\mathrm{ord}(\omega) \mid k,\ell} \binom{\frac{n}{\mathrm{ord}(\omega)}}{\frac{k}{\mathrm{ord}(\omega)}}^2  \binom{\frac{n}{\mathrm{ord}(\omega)}}{\frac{\ell}{\mathrm{ord}(\omega)}} \binom{\frac{k}{\mathrm{ord}(\omega)}}{\frac{\ell}{\mathrm{ord}(\omega)}}  \binom{\frac{k+\ell}{\mathrm{ord}(\omega)}}{\frac{n}{\mathrm{ord}(\omega)}} \omega^{f(n,k,\ell)} =a_{n/\mathrm{ord}(\omega)}(1).
\end{split}
\end{equation*}
We show that \eqref{eq:an2z} holds. The derivative of $a_n(q)$, times $q$, is given by
\begin{equation}\label{eq:qderivzeta}
\begin{split}
qa_n'(q) &= \sum_{k,\ell} {n \brack k}_q q^{f(n,k,\ell)} \Big( {n \brack k}_q \big({n \brack \ell}_q {k \brack \ell}_q {k+\ell \brack n}_q\big)' \\
& \qquad + \big({n \brack k}_q f(n,k,\ell) + 2{n \brack k}_q' q\big) {n \brack \ell}_q {k \brack \ell}_q {k+\ell \brack n}_q \Big).
\end{split}
\end{equation}
After substituting $q =\omega \in \mu_n$ in \eqref{eq:qderivzeta}, we claim that only the terms with $\mathrm{ord}(\omega) \mid k,\ell$ contribute. Indeed, since each summand contains ${n \brack k}_q$, it follows by Lemma~\ref{lemroots} that each summand vanishes on $\omega$ if $\mathrm{ord}(\omega) \nmid k$. Similarly, since each summand contains either ${n \brack \ell}_q$ or ${k \brack \ell}_q$, Lemma~\ref{lemroots} again implies that each summand vanishes on $\omega$ if $\mathrm{ord}(\omega) \nmid \ell$. Thus, Corollary~\ref{cor:binomder} allows us to evaluate \eqref{eq:qderivzeta} at $q=\omega \in \mu_n$ as follows:
\begin{equation*}
\begin{split}
\omega a_n'(\omega) &= \sum_{\mathrm{ord}(\omega) \mid k,\ell} \binom{\frac{n}{\mathrm{ord}(\omega)}}{\frac{k}{\mathrm{ord}(\omega)}}^2 \binom{\frac{n}{\mathrm{ord}(\omega)}}{\frac{\ell}{\mathrm{ord}(\omega)}} \binom{\frac{k}{\mathrm{ord}(\omega)}}{\frac{\ell}{\mathrm{ord}(\omega)}} \binom{\frac{k+\ell}{\mathrm{ord}(\omega)}}{\frac{n}{\mathrm{ord}(\omega)}} \bigg( \frac{\ell(n-\ell)+\ell(k-\ell)+n(k+\ell-n)}{2} \\
& \qquad \quad  + f(n,k,\ell) + k(n-k)\bigg)= \mathrm{ord}(\omega)^2 a_{\frac{n}{\mathrm{ord}(\omega)},xz-y^2-z^2+\frac{3xy+yz-x^2}{2}+f(x,y,z)}.
\end{split}
\end{equation*}
Thus, \eqref{eq:an2z} holds. We show that \eqref{eq:an3z} holds. The second derivative of $a_n(q)$, times $q^2$, is given by
\begin{equation*}
q^2a_n''(q) = S_{n,1}(q) + S_{n,2}(q) + S_{n,3}(q),
\end{equation*}
where 
\begin{equation*}
\begin{split}
 S_{n,1}(q) &= \sum_{k, \ell} {n \brack k}_q q^{f(n,k,\ell)} \\
 & \qquad \cdot \bigg( \Big({n \brack \ell}_q {k \brack \ell}_q {k+\ell \brack n}_q \Big) \Big(2q^2 {n \brack k}''_q  + {n \brack k}_q  f(n,k,\ell)(f(n,k,\ell)-1) + 4q {n \brack k}'_q f(n,k,\ell)\Big)\\
 & \qquad \quad + {n \brack k}_q q^2 \Big( {n \brack \ell}''_q {k \brack \ell}_q {k+\ell \brack n}_q + {n \brack \ell}_q {k \brack \ell}''_q {k+\ell \brack n}_q + {n \brack \ell}_q {k \brack \ell}_q {k+\ell \brack n}''_q\\
 & \qquad \quad + 2{n \brack \ell}'_q {k \brack \ell}_q {k+\ell \brack n}'_q   + 2{n \brack \ell}_q {k \brack \ell}'_q {k+\ell \brack n}'_q\Big) \\
 &\qquad  \quad+ \Big(2{n \brack k}_q f(n,k,\ell)q  + 4{n \brack k}'_q q^2\Big) \Big(
{n \brack \ell}_q {k \brack \ell}_q {k+\ell \brack n}_q \Big)' \bigg), \\
S_{n,2}(q) &= 2\sum_{k,\ell} {n \brack k}^2_q q^{f(n,k,\ell)}q^2  {n \brack \ell}'_q {k \brack \ell}'_q {k+\ell \brack n}_q, \, \quad S_{n,3}(q) = 2\sum_{k,\ell} \big( q {n \brack k}'_q\big)^2 q^{f(n,k,\ell)} {n \brack \ell}_q {k \brack \ell}_q {k+\ell \brack n}_q.
\end{split}
\end{equation*}
Let
\begin{equation*}
\begin{split}
P_1(x,y,z) &= \frac{1}{4}(x^4 - 6x^3 y + 11x^2y^2 - 8xy^3 + 2y^4 - 4x^3z + 10x^2yz -2xy^2 z-4y^3z + 8x^2z^2  \\
& \qquad - 10xyz^2 + 9y^2z^2- 6xz^3- 2yz^3+2z^4)+f\cdot (-x^2+3xy-2y^2+2xz+yz-2z^2)\\
&\qquad +f^2 +\frac{x^2y - xy^2 + 2xyz + y^2z - yz^2}{12}
\end{split}
\end{equation*}
and 
\begin{equation*}
P_2(x,y,z) = \frac{5}{12}(x^2 -3xy+2y^2-2xz-yz+2z^2)-f.
\end{equation*}
Corollary~\ref{cor:binomder} allows us to evaluate $S_{n,1}(q)$ at $q=\omega \in \mu_n$ similarly to the evaluation of $a'_n(\omega)$ and $a_n(\omega)$:
\begin{equation}\label{eq:sn1valz}
\begin{split}
S_{n,1}(\omega)&=\sum_{k, \ell \mid \mathrm{ord}(\omega)} \binom{\frac{n}{\mathrm{ord}(\omega)}}{\frac{k}{\mathrm{ord}(\omega)}}^2 \binom{\frac{n}{\mathrm{ord}(\omega)}}{\frac{\ell}{\mathrm{ord}(\omega)}} \binom{\frac{k}{\mathrm{ord}(\omega)}}{\frac{\ell}{\mathrm{ord}(\omega)}} \binom{\frac{k+\ell}{\mathrm{ord}(\omega)}}{\frac{n}{\mathrm{ord}(\omega)}} \bigg( k(n-k)\big(\frac{k(n-k)}{2}+\frac{n \cdot \mathrm{ord}(\omega)-5}{6}\big) \\
& \qquad + f(n,k,\ell) (f(n,k,\ell)-1) + 2k(n-k)f(n,k,\ell) + \ell(n-\ell)\big(\frac{\ell(n-\ell)}{4}+\frac{n \cdot \mathrm{ord}(\omega)-5}{12}\big) \\
& \qquad +\ell(k-\ell)\big(\frac{\ell(k-\ell)}{4}+\frac{k \cdot \mathrm{ord}(\omega)-5}{12}\big) +
n(k+\ell-n)\big(\frac{n(k+\ell-n)}{4}+\frac{(k+\ell) \cdot \mathrm{ord}(\omega)-5}{12}\big) \\
& \qquad + \frac{\ell(n-\ell)n(k+\ell-n)+\ell(k-\ell)n(k+\ell-n)}{2} \\
& \qquad + \big(f(n,k,\ell) + k(n-k)\big)\big(\ell(n-\ell)+\ell(k-\ell)+n(k+\ell-n)\big)\bigg)\\
&=\mathrm{ord}(\omega)^4 a_{\frac{n}{\mathrm{ord}(\omega)}, P_1}+ \mathrm{ord}(\omega)^2 a_{\frac{n}{\mathrm{ord}(\omega)}, P_2}.
\end{split}
\end{equation}
We turn to evaluate $S_{n,2}(q)$ at $q=\omega\in \mu_n$. We separate $S_{n,2}(q)$ into two sums -- one with the summands corresponding to $\ell$ divisible by $\mathrm{ord}(\omega)$, and another with the rest:
\begin{equation*}
S_{n,2}(q) = T_1(q) + T_2(q),
\end{equation*}
where
\begin{align*}
T_1(q) &= 2\sum_{k,\ell, \, \mathrm{ord}(\omega) \mid \ell} {n \brack k}_q^2 q^{f(n,k,\ell)} q^2 {n \brack \ell}'_q {k \brack \ell}'_q {k+\ell \brack n}_q, \\
T_2(q) &= 2\sum_{k,\ell, \, \mathrm{ord}(\omega) \nmid \ell} {n \brack k}_q^2 q^{f(n,k,\ell)} q^2 {n \brack \ell}'_q {k \brack \ell}'_q {k+\ell \brack n}_q. 
\end{align*}
We use Lemma~\ref{lem:binomnplusk} and Corollary~\ref{cor:binomder}
to evaluate $T_1(\omega)$, a sum that is supported on $k$-s and $\ell$-s divisible by $\mathrm{ord}(\omega)$:
\begin{equation}\label{t1valz}
\begin{split}
T_1(\omega) &= 2\sum_{\mathrm{ord}(\omega) \mid k, \ell}  \binom{\frac{n}{\mathrm{ord}(\omega)}}{\frac{k}{\mathrm{ord}(\omega)}}^2 \binom{\frac{n}{\mathrm{ord}(\omega)}}{\frac{\ell}{\mathrm{ord}(\omega)}} \binom{\frac{k}{\mathrm{ord}(\omega)}}{\frac{\ell}{\mathrm{ord}(\omega)}} \binom{\frac{k+\ell}{\mathrm{ord}(\omega)}}{\frac{n}{\mathrm{ord}(\omega)}} \frac{\ell(n-\ell)}{2} \frac{\ell(k-\ell)}{2}\\
&=\mathrm{ord}(\omega)^4 a_{\frac{n}{\mathrm{ord}(\omega)},\frac{1}{2}(z^2(x-z)(y-z))}.
\end{split}
\end{equation}
We turn to $T_2(\omega)$. By Lemma~\ref{lem:binomnplusk} and Corollary~\ref{cor:binomder}, we may evaluate $T_2(\omega)$ as follows:
\begin{equation}\label{eq:t2omega1z}
T_2(\omega) = 2\sum_{\mathrm{ord}(\omega) \mid k, \, \mathrm{ord}(\omega) \nmid \ell}
\binom{\frac{n}{\mathrm{ord}(\omega)}}{ \frac{k}{\mathrm{ord}(\omega)}}^2  \frac{\omega^{f(n,k,\ell)-\ell^2+\ell}}{(\omega^{\ell}-1)^2} nk \binom{\frac{n}{\mathrm{ord}(\omega)}-1}{\lfloor \frac{\ell-1}{\mathrm{ord}(\omega)} \rfloor}
\binom{\frac{k}{\mathrm{ord}(\omega)}-1}{\lfloor \frac{\ell-1}{\mathrm{ord}(\omega)} \rfloor} \binom{\frac{k}{\mathrm{ord}(\omega)} + \lfloor \frac{\ell}{\mathrm{ord}(\omega)}\rfloor}{\frac{n}{\mathrm{ord}(\omega)}}.
\end{equation}
We write every $0 \le \ell \le n$ with $\mathrm{ord}(\omega) \nmid \ell$ as $\ell=\mathrm{ord}(\omega)j+i$ with $1 \le i \le \mathrm{ord}(\omega)-1$ and $0 \le j \le \frac{n}{\mathrm{ord}(\omega)}-1$. Now we have, for $\omega \in \mu_n$ and $\mathrm{ord}(\omega) \mid k$, the equalities $\omega^{\ell}=\omega^i$, $\omega^{f(n,k,\ell)}=\omega^{\ell^2} = \omega^{i^2}$, $\lfloor \frac{\ell}{\mathrm{ord}(\omega)} \rfloor = \lfloor \frac{\ell-1}{\mathrm{ord}(\omega)} \rfloor = j$. Letting $k'=\frac{k}{\mathrm{ord}(\omega)}$, we express \eqref{eq:t2omega1z} as
\begin{equation}\label{eq:t2omega2z}
T_2(\omega) =2\mathrm{ord}(\omega)n \sum_{k'=0}^{\frac{n}{\mathrm{ord}(\omega)}} \sum_{j=0}^{\frac{n}{\mathrm{ord}(\omega)}-1} \binom{\frac{n}{\mathrm{ord}(\omega)}}{k'}^2 k' \binom{\frac{n}{\mathrm{ord}(\omega)}-1}{j} \binom{k'-1}{j} \binom{k'+j}{\frac{n}{\mathrm{ord}(\omega)}} \sum_{i=1}^{\frac{n}{\mathrm{ord}(\omega)}-1} 
\frac{\omega^i}{(\omega^i-1)^2}.
\end{equation}
The relation $\binom{a-1}{b} = \binom{a}{b} \frac{a-b}{a}$ and Lemma~\ref{lem:rootsunitysum} allow us to simplify \eqref{eq:t2omega2z} as
\begin{equation}\label{eq:ressumroots}
T_2(\omega) = -\frac{\mathrm{ord}(\omega)^2}{6}(\mathrm{ord}(\omega)^2-1) a_{\frac{n}{\mathrm{ord}(\omega)}, (x-z)(y-z)}.
\end{equation}
From \eqref{t1valz} and \eqref{eq:ressumroots}, we obtain
\begin{equation}\label{eq:sn2evalz}
S_{n,2}(\omega)=\mathrm{ord}(\omega)^4
 a_{\frac{n}{\mathrm{ord}(\omega)},(\frac{z^2}{2}-\frac{1}{6})(x-z)(y-z)} + \mathrm{ord}(\omega)^2 a_{\frac{n}{\mathrm{ord}(\omega)}, \frac{(x-z)(y-z)}{6}}.
\end{equation}
We turn to evaluate $S_{n,3}(q)$ at $q=\omega\in \mu_n$. We separate $S_{n,3}(q)$ into two sums -- one with the summands corresponding to $k$ divisible by $\mathrm{ord}(\omega)$, and another with the rest:
\begin{equation*}
S_{n,3}(q) = U_1(q) + U_2(q),
\end{equation*}
where
\begin{align*}
U_1(q) &= 2\sum_{k,\ell, \, \mathrm{ord}(\omega) \mid k} (q{n \brack k}'_q)^2 q^{f(n,k,\ell)}  {n \brack \ell}_q {k \brack \ell}_q {k+\ell \brack n}_q,\\
U_2(q) &= 2\sum_{k,\ell, \, \mathrm{ord}(\omega) \nmid k} (q{n \brack k}'_q)^2 q^{f(n,k,\ell)} {n \brack \ell}_q {k \brack \ell}_q {k+\ell \brack n}_q. 
\end{align*}
We use Lemma~\ref{lem:binomnplusk} and Corollary~\ref{cor:binomder}
to evaluate $U_1(\omega)$, a sum that is supported on $k$-s and $\ell$-s divisible by $\mathrm{ord}(\omega)$:
\begin{equation}\label{u1val}
\begin{split}
U_1(\omega) &= 2\sum_{\mathrm{ord}(\omega) \mid k, \ell}  \binom{\frac{n}{\mathrm{ord}(\omega)}}{\frac{k}{\mathrm{ord}(\omega)}}^2 \binom{\frac{n}{\mathrm{ord}(\omega)}}{\frac{\ell}{\mathrm{ord}(\omega)}} \binom{\frac{k}{\mathrm{ord}(\omega)}}{\frac{\ell}{\mathrm{ord}(\omega)}} \binom{\frac{k+\ell}{\mathrm{ord}(\omega)}}{\frac{n}{\mathrm{ord}(\omega)}} (\frac{k(n-k)}{2})^2\\
&=\mathrm{ord}(\omega)^4 a_{\frac{n}{\mathrm{ord}(\omega)},\frac{1}{2}(y^2(x-y)^2)}.
\end{split}
\end{equation}
We turn to $U_2(\omega)$. By Lemma~\ref{lem:binomnplusk} and Corollary~\ref{cor:binomder}, we may evaluate $U_2(\omega)$ as follows:
\begin{equation}\label{eq:u2omega1}
U_2(\omega) = 2\sum_{\mathrm{ord}(\omega) \mid \ell, \, \mathrm{ord}(\omega) \nmid k}
(n\frac{\omega^{-\binom{k}{2}}}{\omega^k-1} \binom{\frac{n}{\mathrm{ord}(\omega)}-1}{\lfloor \frac{k-1}{\mathrm{ord}(\omega)} \rfloor})^2  \omega^{f(n,k,\ell)}  \binom{\frac{n}{\mathrm{ord}(\omega)}}{ \frac{\ell}{\mathrm{ord}(\omega)} }
\binom{\lfloor \frac{k}{\mathrm{ord}(\omega)}\rfloor}{\frac{\ell}{\mathrm{ord}(\omega)}} \binom{\lfloor \frac{k}{\mathrm{ord}(\omega)} \rfloor+ \frac{\ell}{\mathrm{ord}(\omega)}}{\frac{n}{\mathrm{ord}(\omega)}}.
\end{equation}
We write every $0 \le k \le n$ with $\mathrm{ord}(\omega) \nmid k$ as $k=\mathrm{ord}(\omega)j+i$ with $1 \le i \le \mathrm{ord}(\omega)-1$ and $0 \le j \le \frac{n}{\mathrm{ord}(\omega)}-1$. Now we have, for $\omega \in \mu_n$ and $\mathrm{ord}(\omega) \mid \ell$, the equalities $\omega^{k}=\omega^i$, $\omega^{f(n,k,\ell)}=\omega^{k^2} = \omega^{i^2}$, $\lfloor \frac{k}{\mathrm{ord}(\omega)} \rfloor = \lfloor \frac{k-1}{\mathrm{ord}(\omega)} \rfloor = j$. Letting $\ell'=\frac{\ell}{\mathrm{ord}(\omega)}$, we express \eqref{eq:u2omega1} as
\begin{equation}\label{eq:u2omega2}
U_2(\omega) =2n^2 \sum_{\ell'=0}^{\frac{n}{\mathrm{ord}(\omega)}} \sum_{j=0}^{\frac{n}{\mathrm{ord}(\omega)}-1} \binom{\frac{n}{\mathrm{ord}(\omega)}-1}{j}^2  \binom{\frac{n}{\mathrm{ord}(\omega)}}{\ell'} \binom{j}{\ell'} \binom{j+\ell'}{\frac{n}{\mathrm{ord}(\omega)}} \sum_{i=1}^{\frac{n}{\mathrm{ord}(\omega)}-1} 
\frac{\omega^i}{(\omega^i-1)^2}.
\end{equation}
The relation $\binom{a-1}{b} = \binom{a}{b} \frac{a-b}{a}$ and Lemma~\ref{lem:rootsunitysum} allow us to simplify \eqref{eq:u2omega2} as
\begin{equation}\label{eq:ressumroots2}
U_2(\omega) = -\frac{\mathrm{ord}(\omega)^2}{6}(\mathrm{ord}(\omega)^2-1) a_{\frac{n}{\mathrm{ord}(\omega)}, (x-y)^2}.
\end{equation}
From \eqref{u1val} and \eqref{eq:ressumroots2}, we obtain
\begin{equation}\label{eq:sn3eval}
S_{n,3}(\omega)=\mathrm{ord}(\omega)^4
 a_{\frac{n}{\mathrm{ord}(\omega)},\frac{1}{2}y^2(x-y)^2 - \frac{(x-y)^2}{6}} + \mathrm{ord}(\omega)^2 a_{\frac{n}{\mathrm{ord}(\omega)}, \frac{1}{6}(x-y)^2}.
\end{equation}
From \eqref{eq:sn1valz}, \eqref{eq:sn2evalz} and \eqref{eq:sn3eval}, we obtain \eqref{eq:an3z}, as needed. \qed 
\subsection{Alternative form}\label{sec:altern}
In Theorems~\ref{thm:superqbinom}--\ref{thm:zeta} we have calculated the first three $[n]_q$-digits of several sequences, which allowed us to obtain supercongruences modulo $n^3$. For $n=p$ a prime we describe below an alternative way to deduce the supercongruences, which seems more elegant, although it is not as general as Corollary~\ref{corexpansion}.
\begin{proposition}\label{prop:np}
Let $\{ a_n(q) \}_{n \ge 1} \subseteq \mathbb{Z}[q]$ be a sequence satisfying the $q$-Gauss congruences. Suppose that for any $n \ge 1$ and $\omega \in \mu_n$, we have
\begin{equation}\label{eq:CondAlternative}
\omega a_n'(\omega) = \mathrm{ord}(\omega)^2 a_{\frac{n}{\mathrm{ord}(\omega)}}'(1).
\end{equation}
Moreover, suppose that there are sequences $b_n$, $c_n$ such that for any $n \ge 1$ and $\omega \in \mu_n$, we have
\begin{equation}\label{eq:CondAlternative2}
\omega^2 a_n''(\omega) = \mathrm{ord}(\omega)^4 b_{\frac{n}{\mathrm{ord}(\omega)}} + \mathrm{ord}(\omega)^2 c_{\frac{n}{\mathrm{ord}(\omega)}}.
\end{equation}
Then for any $m,n \ge 1$ with $(m,6)=1$ we have
\begin{equation}\label{eq:residuep3}
 a_{nm}(q) - a_{n}(q^{m^2}) \equiv -(q^m-1)^2  \frac{m^2-1}{2} (c_n+a_n'(1)) \bmod \Phi_m(q)^3.
\end{equation}
In particular for $m=p \ge 5$ a prime we have
\begin{equation}\label{eq:supernew}
a_{np}(1) \equiv a_n(1) \bmod p^3.
\end{equation}
\end{proposition}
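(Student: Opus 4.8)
\emph{Approach.} The plan is to fix a primitive $p$-th root of unity $\omega$ and compare $a_{np}(q)$, $a_n(q^{p^2})$ and $(q^p-1)^2$, together with their first two derivatives, at $\omega$. Set
\begin{equation*}
F(q) := a_{np}(q) - a_n(q^{p^2}) + (q^p-1)^2\,\frac{p^2-1}{2}\,\big(c_n + a_n'(1)\big).
\end{equation*}
Since $p$ is prime, $[p]_q = \Phi_p(q)$ is the separable minimal polynomial of $\omega$ over $\QQ$, so a polynomial $G \in \QQ[q]$ is divisible by $[p]_q^3$ if and only if $G(\omega) = G'(\omega) = G''(\omega) = 0$. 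I would check these three vanishings for $F$. Granting them, $[p]_q^3 \mid F(q)$ in $\QQ[q]$; moreover $F(q) \in \ZZ[q]$, and here $p \ge 5$ enters: $\frac{p^2-1}{2}\in\ZZ$ since $p$ is odd, while specializing \eqref{eq:CondAlternative2} at roots of unity of orders $1$ and $2$ forces $12\,c_n \in \ZZ$, and $24 \mid p^2-1$ because $p$ is coprime to $6$, so $\frac{p^2-1}{2}(c_n+a_n'(1)) \in \ZZ$. Hence by Gauss's lemma $F(q) = [p]_q^3\,H(q)$ with $H \in \ZZ[q]$. As $\deg\big((q^p-1)^2\big) = 2p < 3(p-1) = \deg[p]_q^3$ for $p\ge 5$, the polynomial $-(q^p-1)^2\frac{p^2-1}{2}(c_n+a_n'(1))$ is precisely the remainder of $a_{np}(q) - a_n(q^{p^2})$ modulo $[p]_q^3$, which is \eqref{eq:residuep3}; and substituting $q=1$ into $F(q) = [p]_q^3 H(q)$ gives $a_{np}(1) - a_n(1) = p^3 H(1)$, which is \eqref{eq:supernew}.

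\emph{Value and first derivative at $\omega$.} Throughout, $\mathrm{ord}(\omega) = p$ and $\omega^p = \omega^{p^2} = 1$. Because the sequence satisfies the $q$-Gauss congruences, Corollary \ref{cor:sp} gives $a_{np}(\omega) = a_{np/p}(1) = a_n(1)$; also $a_n(\omega^{p^2}) = a_n(1)$, and $(q^p-1)^2$ vanishes at $\omega$, so $F(\omega) = 0$. Differentiating once, $(q^p-1)^2$ has a double zero at $\omega$ so its derivative vanishes there; hypothesis \eqref{eq:CondAlternative} gives $\omega\,a_{np}'(\omega) = p^2\,a_n'(1)$, hence $a_{np}'(\omega) = p^2\omega^{-1}a_n'(1)$, which exactly cancels $\frac{d}{dq}a_n(q^{p^2})\big|_{q=\omega} = p^2\omega^{p^2-1}a_n'(\omega^{p^2}) = p^2\omega^{-1}a_n'(1)$. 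Thus $F'(\omega) = 0$.

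\emph{Second derivative at $\omega$ --- the crux.} Differentiating twice and evaluating at $\omega$ (using $\omega^{2p^2-2}=\omega^{-2}$ and that $q^p-1$ has a simple zero at $\omega$), hypothesis \eqref{eq:CondAlternative2} gives $\omega^2 a_{np}''(\omega) = p^4 b_n + p^2 c_n$, the chain rule gives $\omega^2\frac{d^2}{dq^2}a_n(q^{p^2})\big|_{q=\omega} = p^2(p^2-1)a_n'(1) + p^4 a_n''(1)$, and $\omega^2\frac{d^2}{dq^2}(q^p-1)^2\big|_{q=\omega} = 2p^2$. Substituting these into $\omega^2 F''(\omega)$ and using $\frac{p^2-1}{2}\cdot 2p^2 = p^2(p^2-1)$, the $a_n'(1)$-terms cancel and one is left with $\omega^2 F''(\omega) = p^4\big(b_n + c_n - a_n''(1)\big)$. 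The decisive observation is that this vanishes: apply \eqref{eq:CondAlternative2} with the same $n$ and $\omega = 1 \in \mu_n$ (of order $1$) to obtain $a_n''(1) = b_n + c_n$. Hence $F''(\omega) = 0$, and the argument of the first paragraph concludes.

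\emph{Main obstacle.} The calculation is mostly routine bookkeeping of the derivatives of $(q^p-1)^2$ and of the substitution $q \mapsto q^{p^2}$ at $\omega$; the one genuine idea is recognizing that the residual quantity $b_n + c_n - a_n''(1)$ is forced to be zero by specializing hypothesis \eqref{eq:CondAlternative2} at the trivial root of unity. The only other subtleties are the two points where $p\ge 5$ is used: $24 \mid p^2-1$ (so that the claimed remainder has integer coefficients, giving the integer congruence \eqref{eq:supernew}) and $2p < 3(p-1)$ (so that $-(q^p-1)^2\frac{p^2-1}{2}(c_n+a_n'(1))$ is literally the remainder, not merely a representative).
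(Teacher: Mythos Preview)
Your proof is correct and follows essentially the same approach as the paper: both verify that $a_{np}(q)-a_n(q^{p^2})$ and the claimed remainder agree to order $3$ at each primitive $p$-th root of unity, using \eqref{eq:CondAlternative}, \eqref{eq:CondAlternative2}, and the key identity $a_n''(1)=b_n+c_n$ (which the paper uses implicitly in its L'H\^{o}pital step). The only cosmetic difference is that the paper first writes $a_{np}(q)-a_n(q^{p^2})=[p]_q^2\,h(q)$ and then matches $h(\omega)$ with $-(\omega-1)^2\frac{p^2-1}{2}(c_n+a_n'(1))$, whereas you subtract the correction term up front and check the triple vanishing directly; your explicit integrality argument for $\frac{p^2-1}{2}(c_n+a_n'(1))$ is a nice touch, though it also follows automatically since the remainder of an integer polynomial by the monic $[p]_q^3$ lies in $\ZZ[q]$.
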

\begin{proof}
Using \eqref{eq:CondAlternative2} with $\omega=1$, it follows that $b_n+c_n$ is an integer. Using \eqref{eq:CondAlternative2} with $\omega=-1$ and $2n$ in place of $n$, we find that $16b_n + 4c_n$ is an integer. These two integrality conditions imply that $b_n,c_n$ are rational numbers with denominator dividing $12$. In particular, the right-hand side of \eqref{eq:residuep3} has integer coefficients since $24 \mid m^2-1$ if $(m,6)=1$.

Let $g_1(q) = a_{nm}(q) - a_n(q^{m^2})$ and $g_2(q)=-(q^m-1)(m^2-1)(c_n + a'_n(1))/2$. As $a_n(q)$ satisfies the $q$-Gauss congruences, Corollary~\ref{cor:sp} implies that $g_1(\omega)$ vanishes on the zeros of $\Phi_m(q)$, and \eqref{eq:CondAlternative} ensures that $g_1'(\omega)$ vanishes on these zeros again. We also have that $g_2$ vanishes twice on the zeros of $\Phi_m(q)$ by construction. Using \eqref{eq:CondAlternative} and \eqref{eq:CondAlternative2}, we have
\begin{equation}
\begin{split}
g_1''(\omega) &= a''_{nm}(\omega) - (m^2 \omega^{m^2-1})^2 a_n''(\omega^{m^2}) - (m^2(m^2-1)\omega^{m^2-2})a_n'(\omega^{m^2}) \\
&= \frac{m^4 b_n + m^2 c_n}{\omega^2} - \frac{m^4(b_n+c_n)}{\omega^2} - \frac{m^2(m^2-1)a_n'(1)}{\omega^2}=\frac{-m^2(m^2-1)(c_n+a_n'(1))}{\omega^2}
\end{split}
\end{equation}
for any $\omega$ which is a primitive root of unity of order $m$, and similarly
\begin{equation}
g_2''(\omega) =\frac{-m^2(m^2-1)(c_n+a_n'(1))}{\omega^2}.
\end{equation}
Thus, $(g_1-g_2)''(\omega)=0$ for every zero of $\Phi_m$, which establishes \eqref{eq:residuep3}. To establish \eqref{eq:supernew}, we apply \eqref{eq:residuep3} with $m=p$ and plug $q=1$.
\end{proof}
For $a_n(q) = {an \brack bn}_q$, Corollary~\ref{cor:binomder} shows that the conditions of Proposition~\ref{prop:np} hold with $a_n'(1) = \binom{an}{bn}\frac{b(a-b)}{2}n^2$, $b_n = \binom{an}{bn}\frac{b(a-b)n^2}{4} (b(a-b)n^2 + \frac{an}{3})$, $c_n=-\frac{5}{12} \binom{an}{bn}b(a-b)n^2$. Applying the proposition with $n=1$, we obtain \eqref{straubres2}. 

For $a_n(q) = \sum_{k=0}^{n} {n \brack k}_q^2 {n+k \brack k}_q^2 q^{f(n,k)}$ with $f(x,y)$ as in Theorem~\ref{thm:aprey}, then the proof of Theorem~\ref{thm:aprey} shows that the conditions of Proposition~\ref{prop:np} hold with $a_n'(1) = a_{n,2xy-y^2+f(x,y)}$, $b_n=a_{n,(2xy-y^2+f(x,y))^2+\frac{x^2y}{3}-\frac{(x-y)^2}{6}}$ and $c_n=a_{n,\frac{x^2}{6}-2xy+y^2-f(x,y)}$. Applying the proposition we obtain \eqref{eq:aperyqelegant}.

Finally, for $a_n(q) = \sum_{k,\ell} {n \brack k}_q^2 {n \brack \ell}_q {k \brack \ell}_q {k+\ell \brack n}_q q^{f(n,k,\ell)}$ with $f(x,y,z)$ as in Theorem~\ref{thm:zeta}, the proof of Theorem~\ref{thm:zeta} shows that the conditions of Proposition~\ref{prop:np} hold with $a'_n(1) = a_{n,xz-y^2-z^2+ \frac{3xy+yz-x^2}{2}+f(x,y,z)}$, $b_n = a_{n,Q_1}$ and $c_n = a_{n,Q_2}$, where $Q_1$, $Q_2$ are given in Theorem~\ref{thm:zeta}. Applying the proposition we obtain \eqref{eq:zetaqelegant}.
\section*{Acknowledgments}
The author would like to thank Ira Gessel and Richard Stanley for the references in \S\ref{seccritgauss}, and the referee for valuable suggestions. He is also grateful to Wadim Zudilin for discussions on $q$-congruences and useful suggestions regarding the paper. He is especially thankful to Armin Straub for careful suggestions that improved an earlier draft.

\bibliographystyle{alpha}
\bibliography{references}

Raymond and Beverly Sackler School of Mathematical Sciences, Tel Aviv University, Tel Aviv 6997801, Israel.
E-mail address: ofir.goro@gmail.com

\end{document}